\let\pa\partial
\let\na\nabla
\let\eps\varepsilon
\newcommand{\N}{{\mathbb N}}
\newcommand{\R}{{\mathbb R}}
\newcommand{\Rge}{{\mathbb R}_\ge} 
\newcommand{\diver}{\operatorname{div}}
\newcommand{\T}{{\mathcal T}}
\newcommand{\E}{{\mathcal E}}
\newcommand{\m}{\mathfrak{m}}
\newcommand{\dist}{{\operatorname{d}}}
\newcommand{\dd}{\mathrm{d}}
\newcommand{\mm}{\mathfrak{\widetilde m}}
\DeclareMathOperator*\esssup{ess\,sup}
\newtheorem{theorem}{Theorem}
\newtheorem{lemma}[theorem]{Lemma}
\newtheorem{proposition}[theorem]{Proposition}
\newtheorem{remark}[theorem]{Remark}
\newtheorem{corollary}[theorem]{Corollary}
\newtheorem{definition}{Definition}
\newcommand{\Shn}{S}
\newcommand{\shn}{\mathfrak{h}}
\newcommand{\BB}{B}
\newcommand{\SShannon}{Shannon\ }
\newcommand{\Om}{\Omega}
\newcommand{\qbf}{p}
\newcommand{\ra}{\rangle}\newcommand{\la}{\langle}
\begin{document}

\title[Finite-volume scheme for hyperbolic--parabolic systems]{
	Convergence of a finite-volume scheme and \\
    dissipative measure-valued--strong stability for \\
	a hyperbolic--parabolic cross-diffusion system}

\author[K. Hopf]{Katharina Hopf}
\address{Katharina Hopf, Weierstra{\ss} Institute for Applied Analysis and Stochastics (WIAS),
Mohrenstr. 39, 10117 Berlin, Germany}
\email{hopf@wias-berlin.de}

\author[A. J\"ungel]{Ansgar J\"ungel}
\address{Ansgar J\"ungel, Institute of Analysis and Scientific Computing, Technische Universit\"at Wien,
Wiedner Hauptstra\ss e 8--10, 1040 Wien, Austria}
\email{juengel@tuwien.ac.at}

\date{\today}

\thanks{The second author acknowledges partial support from
the Austrian Science Fund (FWF), grants P33010 and F65.
He thanks Antoine Zurek (Compi\`egne) and Flore Nabet (Palaiseau) for helpful discussions
and pointing out the papers \cite{Nab16,Oul18}.
This work has received funding from the European
Research Council (ERC) under the European Union's Horizon 2020 research and innovation programme, ERC Advanced Grant no.~101018153.}

\begin{abstract}
This article is concerned with the development of a theoretical framework of global measure-valued solutions for a class of hyperbolic--parabolic cross-diffusion systems, and its application to the convergence analysis of a fully discrete finite-volume scheme. 
After introducing an appropriate notion of dissipative measure-valued solutions to the PDE system, a numerical scheme is proposed which is shown to generate, in the continuum limit, a dissipative measure-valued solution. 
The ``parabolic density part'' of the  limiting measure-valued solution is atomic
and converges to its constant state for long times. 
Furthermore, it is proved that whenever the PDE system possesses a strong solution, the convergence of the approximation scheme holds in the strong sense.
The results are based on Young measure theory and a weak--strong stability estimate combining Shannon and Rao entropies.
The convergence of the numerical scheme is achieved by means of discrete entropy dissipation inequalities and an artificial diffusion, which vanishes in the continuum limit.
\end{abstract}

\keywords{Cross diffusion, segregating populations, parametrized measure, dissipative measure-valued solution, finite-volume method, entropy method,
weak--strong uniqueness, long-time behavior.}

\subjclass[2020]{35M33, 35R06, 65M12, 92D25.}

\maketitle

\section{Introduction}

The segregation of multi-species populations can be modeled at a macroscopic level by cross-diffusion equations. Segregation typically requires the associated diffusion matrix to have a nontrivial kernel. In this situation, solutions may have spatial discontinuities; see, e.g., \cite{BGHP85} for a two-species model. 
Diffusively regularized segregation models have been derived, for an arbitrary number of species, from interacting particle systems in a mean-field-type limit~\cite{CDJ19}. 
The class considered here has recently been found to possess a symmetric hyperbolic--parabolic structure~\cite{DHJ22}.
In this paper, we establish the global existence of dissipative measure-valued solutions as a limit of finite-volume approximations, 
the uniqueness of strong solutions among dissipative measure-valued solutions, and a result on the  long-time asymptotic behavior.

\subsection{Equations}

The segregation cross-diffusion equations for the vector $u=(u_1,\ldots,u_n)$ of
the population densities $u_i$ are systems of continuity equations,
\begin{equation}\label{1.eq.A}
  \pa_t u_i + \diver(u_i\mathsf{v}_i)=0, \quad \mathsf{v}_i=-\na p_i(u), 
	\quad\mbox{in }\Omega,\ t>0,\ i=1,\ldots,n,
\end{equation}
where $p_i(u) = \sum_{j=1}^n a_{ij}u_j$ and $\Omega\subset\R^d$ ($d\ge 1$) is a bounded Lipschitz domain, supplemented with the
no-flux boundary and initial conditions
\begin{equation}\label{1.bic}
  u_i\na p_i(u)\cdot\nu = 0\quad\mbox{on }\pa\Omega,\ t>0, \quad
	u_i(0)=u_i^{\rm in}\quad\mbox{in }\Omega,\ i=1,\ldots,n,
\end{equation}
where $\nu$ denotes the exterior unit normal vector to $\pa\Omega$. 
The variables $(u_i)$ represent, for instance, densities of animal populations \cite{BGHP85},
healthy and tumor cell densities \cite{LLP_2017}, or heights of thin fluid layers 
\cite{EMM12,LM_2023}.

The parameters $a_{ij}\ge 0$ are assumed to satisfy the following two conditions:
The matrix $A=(a_{ij})\in\R^{n\times n}$ is semistable, i.e., the real parts of
all its eigenvalues are nonnegative, and it satisfies the detailed-balance condition,
i.e., there exist $\pi_1,\ldots,\pi_n>0$ such that
\begin{equation}\label{1.dbc}
  \pi_i a_{ij} = \pi_j a_{ji}\quad\mbox{for all }i,j=1,\ldots,n,\ i\neq j.
\end{equation}
These equations can be recognized as the detailed-balance condition for the Markov chain associated to $A$, and the vector $(\pi_i)$ is an invariant measure.
Under condition~\eqref{1.dbc}, the change of variables $u_j\mapsto \pi_ju_j=:\widetilde u_j$ brings the equation in the form $\pa_t\widetilde u_i=\diver(\widetilde u_i\nabla (B\widetilde u)_i)$, where the matrix $\BB=(a_{ij}\pi_j^{-1})_{ij}$ is symmetric and positive semidefinite. Thus, from now on we consider, without loss of generality, the equations
\begin{equation}\label{1.eq}
	\pa_t u_i = \diver(u_i\na p_i(u)), \quad p_i(u) = \sum_{j=1}^n b_{ij}u_j
	\quad\mbox{in }\Omega,\ t>0,\ i=1,\ldots,n,
\end{equation}
where $\BB=(b_{ij})\in\R^{n\times n}$ is symmetric positive semidefinite and $b_{ij}\ge 0$ for all $i,j=1,\dots,n$.
We note that if $b_{ii}=0$ for some $i$, then $b_{ji}=b_{ij}=0$ for $j=1,\dots,n$ due to the positive semidefiniteness of $B$. Thus, in this case,
the dynamics of $u_i$ become trivial and the $i$th species can be removed from the system. We may therefore further assume that $b_{ii}>0$ for all $i=1,\dots,n$.
If $\operatorname{rank}\BB=n$ and $u_i>0$ for all $i=1,\dots,n$, equation~\eqref{1.eq} is parabolic in the sense of Petrovskii, 
which at the linear level is a minimal condition for the
generation of an analytic semigroup on $L^p(\Omega)$ \cite{Ama93}.
The existence of global weak solutions in the case $\operatorname{rank}\BB=n$ was
investigated in~\cite[Theorem~17]{JPZ22}. 
If $B$ has a nontrivial kernel, it is positive definite only on the subspace $(\operatorname{ker}B)^\perp$, 
and we lose the parabolic structure.
This is the situation we are primarily concerned with in this paper.

\subsection{State of the art}

Equations \eqref{1.eq.A} with nontrivial kernel of $A$ have been studied in the literature in special cases. 
The first work is \cite{BGHP85}, where the global existence of segregated solutions for two species in one space dimension with $a_{11}=a_{12}=1$ and $a_{21}=a_{22}=k>0$ was shown.
This result relies on a change to mass variables. 
The analysis was generalized in \cite{BHIM12} to several space dimensions if $k=1$. 
The idea is to introduce new variables $w_1=u_1+u_2$ and $w_2=u_1/(u_1+u_2)$.
It turns out that $w_1$ solves a porous-medium equation with quadratic nonlinearity
and $w_2$ solves a transport equation, demonstrating the hyperbolic--parabolic nature of
the system. The same idea was used in \cite{CFSS18} for a related system with general pressures $p_i$ and, employing different techniques, in \cite{GPS19} with $p_i(u)=(u_1+u_2)^\gamma$ for $\gamma>1$. Notice that the choice $k=1$ means that the corresponding velocity fields $\textsf{v}_i$ in~\eqref{1.eq.A} are independent of $i$, so that the motion of the two species is governed by a single velocity field.

The existence of an infinite family of minimizers of the entropy 
(or free energy) functional for different local and nonlocal variants was proved in \cite{BCPS20}, showing that both segregation and mixing of species is possible.
If the pressure is the variational derivative of a certain functional,
one may formulate \eqref{1.eq.A} for $n=2$ as a formal gradient flow. This
property has been exploited in \cite{BCPS20,DEF18} to prove the convergence of a minimizing scheme. 

The one-velocity two-species case was generalized to an arbitrary number of species 
in \cite{DrJu19}, proving the global existence
of classical and weak solutions by decomposing the system
into one decoupled porous-medium equation and $n-1$ transport equations. This approach was generalized in \cite{DHJ22} to the case of multiple velocity fields and with associated diffusion matrices of arbitrary rank $r\in\{1,\dots,n\}$ to show the local existence of classical solutions. Segregating solutions for one-velocity multi-species reactive systems were constructed in~\cite{Jac22}.

There exist related cross-diffusion models with rank-deficient diffusion matrices
in the literature, for instance the Maxwell--Stefan equations for fluid mixtures
\cite{Bot11}, where the diffusion matrix has a one-dimensional kernel. In contrast to the present problem, the kernel can
be removed by taking into account the volume-filling assumption $\sum_{i=1}^n u_i=1$, which allows one to reduce the system for the densities $u_1,\ldots,u_n$ 
to a parabolic one for the variables $u_1,\ldots,u_{n-1}$ via
$u_n=1-\sum_{i=1}^{n-1}u_i$ \cite{JuSt13}. 

The analysis and the convergence of approximation schemes to equations \eqref{1.eq.A} for general rank-deficient matrices $A$ 
is challenging, since the decomposition of the parabolic and hyperbolic parts is involved. Moreover, in view of the
results of \cite{BGHP85}, we cannot expect weak solutions in $H^1(\Omega)$,
and the hyperbolic part makes it difficult to obtain (entropy) solutions
in the distributional sense.
In the present paper, we choose to enlarge the solution space by 
considering dissipative measure-valued solutions, which allow us to encode information 
about the oscillation properties of the approximate solutions. 

DiPerna introduced the concept of (entropy) {\em measure-valued solutions} to
conservation laws \cite{DiP85}. In this framework, solutions are no longer integrable functions
but Young measures (parametrized probability measures), which are able to capture
the limiting behavior of sequences of oscillating functions. This concept is based
on an earlier work by Tartar \cite{Tar79}, who characterized weak limits of sequences of bounded functions. 
Due to the lack of uniqueness results, the framework of measure-valued solutions does
not allow one to identify the physically relevant solutions, and further
structural conditions on the solutions are necessary. 

One idea to resolve this issue is to require
an integrated form of the entropy or energy inequality, which leads to the 
concept of {\em dissipative solutions}. It has been introduced by P.-L.~Lions
\cite[Sec.~4.4]{Lio96} in the context of the incompressible Euler equations.
In~\cite{BDS_2011} it is shown that dissipative measure-valued solutions to the incompressible Euler equations enjoy the weak--strong uniqueness property, i.e., the dissipative measure-valued solution is atomic and coincides with
the strong or classical solution of the same initial-value problem if the
latter exists. 
This idea was further applied to models from polyconvex elastodynamics~\cite{DST12}, to the compressible Euler and Navier--Stokes equations \cite{FGSW16,GSW_2015},
to hyperbolic--parabolic systems in thermoviscoelasticity \cite{ChTz18}, and to various other, mainly fluid mechanical models.
	
In the present paper, we obtain dissipative measure-valued solutions to \eqref{1.eq}, \eqref{1.bic} by passing to the limit from discrete {\em finite-volume solutions}. 
We further show that they enjoy the weak--strong uniqueness property (in the sense of measure-valued--strong uniqueness),
which entails important consequences for the numerical approximation. 
Indeed, one may expect that reasonable structure-preserving approximation schemes generate a dissipative measure-valued solution. If such a measure-valued solution turns out to be atomic, i.e.\ taking the form of a Dirac measure at each point in space-time, 
Young measure theory implies that the underlying approximate solutions  converge in the strong sense. This idea has, for instance, been exploited in the proof of the convergence of finite-volume-type schemes for the compressible Navier--Stokes and Euler equations \cite{FeLu18,FLM20}.
For a further discussion on the use of measure-valued solutions in the numerical context, we refer to~\cite{FKMT_2016}.

The novelty of this paper is the analysis of equations \eqref{1.eq} with
general rank-deficient matrices $B$ by combining the
measure-valued framework, entropy methods, and finite-volume schemes.

\subsection{Key tools, definitions, and overview}

The analysis of \eqref{1.eq} is based on the observation that the system possesses
two Lyapunov functionals, respectively, the  \SShannon and Rao entropies
\begin{align}
  & H_\Shn(u) = \int_\Omega h_\Shn(u)\dd x, \quad
	h_\Shn(u) = \sum_{i=1}^n\big(u_i(\log u_i-1)+1\big), 
    \label{1.HB} \\
	& H_R(u) = \int_\Omega h_R(u)\dd x, \quad
	h_R(u) = \frac12\sum_{i,j=1}^nb_{ij} u_iu_j. \label{1.HR}
\end{align}
The Shannon (--Boltzmann) entropy is related to the thermodynamic entropy of the system,
while the Rao entropy measures the functional diversity of the species
\cite{Rao82}. 

The functionals have two important properties. First, a computation shows that, 
along smooth solutions to \eqref{1.eq}, \eqref{1.bic},
\begin{align}
  \frac{\dd H_\Shn}{\dd t}(u) 
	+ \sum_{i,j=1}^n\int_\Omega b_{ij}\na u_i\cdot\na u_j \dd x &= 0, 
	\label{1.eiB} \\
	\frac{\dd H_R}{\dd t}(u) + \sum_{i=1}^n\int_\Omega u_i|\na p_i(u)|^2\dd x &= 0.
	\label{1.eiR}
\end{align}
Since the matrix $\BB$ is positive semidefinite, the Shannon entropy dissipation
term (the integral term in \eqref{1.eiB}) is nonnegative and consequently,
$t\mapsto H_\Shn(u(t))$ is nonincreasing.
The expression $p_i(u)$ can be interpreted as the $i$th partial pressure and
$-\na p_i(u)$ as the $i$th partial velocity (by Darcy's law). Thus, we may interpret
the Rao entropy dissipation integral as the total kinetic energy of the system,
and $t\mapsto H_R(u(t))$ is also nonincreasing.

Second, the Shannon and Rao entropy densities $h_S$ and $h_R$ are convex, and their sum 
$h_S+h_R$ is strictly convex and has quadratic growth as $|u|\to\infty$, $u\in(0,\infty)^n$, as soon as $b_{ij}\ge0$ and $b_{ii}>0$ for all $i,j=1,\dots,n$. 
These properties allow us  to derive a weak--strong stability estimate based on the Bregman distance $h(u|v):=h(u)-h(v)-h'(v)\cdot(u{-}v)$ associated with $h=h_S+h_R$. 

Identities \eqref{1.eiB}--\eqref{1.eiR}
provide estimates for $u_i$ in $L^\infty(0,T;L^2(\Omega))$ 
and for $(\BB u)_i$ in $L^2(0,T;$ $H^1(\Omega))$, $T>0$.
If $\BB$ is rank-deficient, 
these bounds do not ensure gradient estimates for the whole vector $u$.
Notice that the weak convergence for $u_m$ and $\na p_i(u_m)=\na(\BB u_m)_i$ in $L^2(\Omega\times(0,T))$, which may be expected for suitable approximating sequences $u_m$,  does not allow us to identify the weak limit of $u_{m,i}\na(\BB u_m)_i$. This
issue is overcome by a suitable concept of dissipative measure-valued solutions. Let us mention that the estimates coming from~\eqref{1.eiR} lead to a control of $u_{m,i}\na(\BB u_m)_i$ in $L^2(0,T;L^{4/3}(\Om))$, thus ruling out potential concentrations in this term.

Before giving the definition of the measure-valued solutions, we introduce some notation. We rewrite equation \eqref{1.eq} as
$$
   \pa_t u_i = \diver(u_i\na(\BB u)_i), \quad i=1,\ldots,n,
$$
and set $L:=\operatorname{ker}\BB\subsetneq\mathbb{R}^n$. Then 
$L^\perp=\operatorname{ran}B$. Let $P_{L^\perp}$ be the projection
onto $L^\perp$ and set $\widehat{s}:=P_{L^\perp}s$ for $s\in\R^n$.
Any vector-valued function $u$ is written as $u=(u_1,\ldots,u_n)$.
We define $\Rge=[0,\infty)$ and let $\mathcal{P}(W)$ be the space of
probability measures on 
$$
  W := \Rge^n\times(L^\perp)^d.
$$
The space $L^\infty_w(\Omega\times[0,\infty);\mathcal{P}(W))$ is the set of 
weakly$^*$ measurable, essentially bounded functions of $\Omega\times[0,\infty)$
taking values in $\mathcal{P}(W)$. 
We henceforth use the notation
	$$
	\langle\nu,f(s,p)\rangle :=\int_{W}f(s,p)\,\dd\nu(s,p)\quad\text{for }\nu\in \mathcal{P}(W),f\in C_0(W),
	$$
	where $C_0$ denotes the space of
	continuous functions vanishing at infinity. Whenever the right-hand side is well defined, this notation will also be used for more general continuous functions $f$.
Finally, we let $\Omega_T:=\Omega\times(0,T)$ for $T>0$.

\begin{definition}[Dissipative measure-valued solution]\label{def.dmvs}
Suppose that $u^{\rm in}\in L^2(\Omega;\Rge^n)$.
We call a parametrized measure 
$$
  \mu\in L^\infty_w(\Omega\times[0,\infty);\mathcal{P}(W))
$$
with barycenters $u:=\langle\mu,s\rangle$, $y:=\langle\mu,p\rangle$
a {\em dissipative measure-valued solution} to \eqref{1.eq}, \eqref{1.bic} 
if the following is satisfied for all $T>0$:
\begin{itemize}
\item {\rm Regularity:} 
It holds that
\begin{align*}
  &\qquad {u}\in L^\infty(0,\infty;L^2(\Omega;{\R^n})),\ \partial_tu\in L^2(0,\infty;W^{1,4}(\Omega;{\R^n})^*),
  \ y\in L^2(\Omega_T;(L^\perp)^d), \ y=\na\widehat{u}.
\end{align*}
Moreover, $\mu$ acts trivially on the $\widehat{s}$-component,
\begin{equation}\label{2.sepa}
	\langle\mu,f(s,p)\rangle = \langle\mu,f(\widehat{u}+P_Ls,p)\rangle
\end{equation}
for all $f\in C_0(\Rge^n\times (L^\perp)^d)$.
\item {\rm \SShannon and Rao entropy inequalities:} It holds for a.e.\ $t>0$ that
\begin{align}
	H_S^{\rm mv}(u(t))
	+ \int_0^t\int_\Omega\la\mu_{x,\tau},|\BB^{1/2}p|^2\ra \dd x\dd\tau
	&\le H_\Shn(u^{\rm in}), \label{1.eiBm} \\
	H_R(u(t)) + \sum_{i=1}^n\int_0^t\int_\Omega\big\langle\mu_{x,\tau},
	s_i|(Bp)_i|^2\big\rangle\,\dd x\dd \tau &\le H_R(u^{\rm in}), \label{1.eiRm}
\end{align}
where $H_\Shn$ and $H_R$ are defined in \eqref{1.HB}--\eqref{1.HR}
and $H_S^{\rm mv}(u(t)):=\int_\Om\la \mu_{x,t},h_S(s)\ra\dd x$.
\item {\rm Evolution equation:} It holds for all $i=1,\ldots,n$ and
$\phi\in C^1_c(\overline\Omega\times[0,T))$ that
\begin{equation}\label{1.evol}
  \int_0^T\int_\Omega u_i\pa_t\phi\dd x\dd t + \int_\Omega u_i^{\rm in}\phi(0)\dd x
  = \int_0^T\int_\Omega\langle\mu_{x,t},s_i(\BB p)_i\rangle\cdot 
  \na\phi\dd x\dd t.
\end{equation}
\end{itemize}
\end{definition}

It is easy to see that, under the hypotheses of Definition~\ref{def.dmvs}, 
the term $\langle\mu,s_i(\BB p)_i\rangle\in L^1(\Omega_T)$ is well defined for all $T>0$
 (cf.~Section~\ref{ssec:tempreg}). Moreover, $u_i=\la\mu,s_i\ra\ge0$.
 Property~\eqref{2.sepa} can be extended to a larger class of continuous functions $f$.
In particular, it holds for all $f\in C(W)$ with $f\ge0$.
If ${\rm rank}\,B=n $, property~\eqref{2.sepa} implies that $u$ fulfills \eqref{1.eq}, \eqref{1.bic} in the usual weak sense,  since then $P_L=0$.
We further observe in the following remark the consistency of Definition~\ref{def.dmvs} with the standard weak solution concept.
\begin{remark}[Consistency of the definition]\label{rem.dmvs}\rm
The definition of dissipative measure-valued solutions is consistent with the
definition of weak solutions. Indeed, any weak solution $u$ to 
\eqref{1.eq}, \eqref{1.bic} satisfying the regularity statements
of Definition \ref{def.dmvs} and the  \SShannon and
Rao entropy inequalities gives rise to a dissipative measure-valued solution $\mu$
via $\mu_{x,t}=\delta_{u(x,t)}\otimes\delta_{\na\widehat{u}(x,t)}$. 
On the other hand, if a dissipative measure-valued solution $\mu$ is trivial 
in the sense that $\mu_{x,t}=\delta_{v(x,t)}\otimes\delta_{z(x,t)}$ for certain
functions $v$ and $z$, then $v=\langle\mu,s\rangle=u$ and $z=\langle\mu,p\rangle
=y=\na\widehat{u}$. We infer that
$$
\langle\mu,s_i(\BB p)_i\rangle 
= u_i(\BB\na\widehat{u})_i.
$$
In this case, equation \eqref{1.evol} reduces to the standard
weak formulation of \eqref{1.eq} for the density $u$ and the entropy inequalities \eqref{1.eiBm} and \eqref{1.eiRm} take the usual form of entropy inequalities for weak solutions.
More generally, the conclusion 
$\langle\mu,s_i(\BB p)_i\rangle 
= u_i(\BB\na\widehat{u})_i$ already holds if, for instance, $\mu$ is only atomic in the density component, i.e. $\mu_{x,t}=\delta_{v(x,t)}\otimes\nu_{x,t}$, where $\nu$ denotes the parametrized measure generated by $(\nabla\widehat{u}_m)_m$ with $(u_m)_m$ denoting the approximate sequence.
\end{remark}

Our main results can be sketched as follows; we refer to Section \ref{sec.main} for the precise statements. 

\begin{itemize}
\item Existence of finite-volume approximations:
There exists a sequence of approximate solutions $(u_m)$,
where $m\in\N$ indicates the fineness of the mesh, 
to an implicit Euler finite-volume scheme.
The numerical scheme preserves the structure of the equations, namely nonnegativity,
conservation of mass, and entropy dissipation; see Theorem \ref{thm.ex}.

\item Existence of global dissipative measure-valued solutions:
Any Young measure $\mu$ generated by $(u_m)$ is a dissipative measure-valued solution
 to \eqref{1.eq}, \eqref{1.bic} in the sense of Definition \ref{def.dmvs};
see Theorem \ref{thm.conv}. For this result, we need to include some artificial diffusion in the scheme, which vanishes in the limit $m\to\infty$.

\item Weak--strong uniqueness: If $v$ is a positive classical solution 
to \eqref{1.eq}, \eqref{1.bic}
with initial datum $v(0)=u^{\rm in}$ and $\mu$ is a dissipative measure-valued solution to
\eqref{1.eq}, \eqref{1.bic},
then $\mu_{x,t}=\delta_{v(x,t)}\otimes\delta_{\na\widehat{v}(x,t)}$ for a.e.\
$(x,t)\in\Omega_T$; see Theorem \ref{thm.wsu}.

\item Long-time behavior: The density 
$\widehat{u}(t):=\langle\mu_{\boldsymbol{\cdot},t},\widehat{s}\rangle$ 
converges strongly in the $L^2(\Omega)$ norm as $t\to\infty$
to a function $\widehat{u}^*\in L^2(\Omega;[0,\infty)^n)$
satisfying $\int_\Omega\widehat{u}^*\dd x=\int_\Omega u^{\rm in}\dd x$ and
$\na(\BB\widehat{u}^*)=0$ in $\Omega$; see Theorem \ref{thm.time}.
\end{itemize}

If equations \eqref{1.eq}, \eqref{1.bic} admit a classical solution, 
the weak--strong uniqueness property implies that the sequence
of finite-volume solutions converges, in the strong $L^1$-sense, to this classical solution on the
lifespan of the latter; see Corollary~\ref{cor:strong}.

We stress the fact that, while the existence of solutions is proved via a finite-volume scheme, the weak--strong uniqueness and long-time behavior results are independent of the numerical scheme. In this regard, the discrete numerical approximation serves as a tool for the existence analysis, even though the mere existence of dissipative measure-valued solutions may more readily be obtained via approximation by a regularized continuous system.

The paper is organized as follows. We introduce the numerical scheme
and the precise statements of the theorems in Section \ref{sec.numer}.
The four theorems are proved in Sections \ref{sec.ex}--\ref{sec.time},
and we conclude in Appendix \ref{sec.app} with some auxiliary lemmas.

\section{Numerical scheme and main results}\label{sec.numer}

First, we introduce the notation necessary to formulate our numerical method. Then we state the numerical scheme and the main results.

\subsection{Spatial domain and mesh}

Let $d\ge 2$ and let $\Omega\subset\R^d$ be a bounded polygonal domain 
(or polyhedral if $d\ge 3$).
We associate to this domain an admissible mesh, given by
(i) a family $\T$ of open polygonal (or polyhedral) control volumes, which are
also called cells,  
(ii) a family $\E$ of edges (or faces if $d\ge 3$),
and (iii) a family of points $(x_K)_{K\in\T}$ associated to the control volumes
and satisfying \cite[Definition 9.1]{EGH00}. This definition implies that
the straight line $\overline{x_Kx_L}$ between two centers of neighboring cells
is orthogonal to the edge (or face) $\sigma=K|L$ between two cells.
For instance, Vorono\"{\i} meshes satisfy this condition \cite[Example 9.2]{EGH00}.
The size of the mesh is given by $\Delta x = \max_{K\in\T}\operatorname{diam}(K)$.
The family of edges $\E$ is assumed to consist of interior edges
$\E_{\rm int}$ satisfying $\sigma\subset \Omega $ and boundary edges $\sigma\in\E_{\rm ext}$
satisfying $\sigma\subset\pa\Omega$. For a given $K\in\T$, $\E_K$ denotes the 
set of edges of $K$, splitting into $\E_K= \E_{{\rm int},K}\cup\E_{{\rm ext},K}$.
For any $\sigma\in\E$, there exists at least one cell $K\in\T$ such that
$\sigma\in\E_K$. 

We need a regularity assumption on the families of meshes we admit.
For given $\sigma\in\E$, 
we define the distance
$$
  \dist_\sigma = \begin{cases}
	\dist(x_K,x_L) &\quad\mbox{if }\sigma=K|L\in\E_{{\rm int},K}, \\
	\dist(x_K,\sigma) &\quad\mbox{if }\sigma\in\E_{{\rm ext},K},
	\end{cases}
$$
where d is the Euclidean distance in $\R^d$, and the transmissibility coefficient
\begin{equation}\label{2.trans}
  \tau_\sigma = \frac{\mm(\sigma)}{\dist_\sigma},
\end{equation}
where $\mm(\sigma)$ denotes the $(d{-}1)$-dimensional Hausdorff measure of $\sigma$. 
We suppose the following mesh regularity condition for any admissible family of meshes $\{\T\}$: There exists a fixed
$\zeta>0$, independent of $\T$, such that for all $K\in\T$ and $\sigma\in\E_K$,
\begin{equation}\label{2.regul}
  \dist(x_K,\sigma)\ge \zeta \dist_\sigma.
\end{equation}
This condition means that the family of meshes $\{\T\}$ is (locally) quasi-uniform.
We also use the geometric property
\begin{equation}\label{2.sum}
  \sum_{\sigma\in\E_{{\rm int},K}}\mm(\sigma)\dist(x_K,\sigma)
	\le d\m(K)\quad\mbox{for any }K\in\T,
\end{equation}
where $\m$ denotes the $d$-dimensional Lebesgue measure.
Inequalities \eqref{2.regul} and \eqref{2.sum} are needed, for instance, 
to derive a uniform bound 
for the discrete time derivative of the approximate solution; 
see Lemma \ref{lem.time}. 

\subsection{Function spaces} \label{ssec:function-spaces}

Let $T>0$, $N\in\N$ and introduce the time step size $\Delta t=T/N$
and the time steps $t_k=k\Delta t$ for $k=0,\ldots,N$. 
We denote by
$\mathcal{D}$ the space-time discretization of 
$\Omega_T=\Omega\times(0,T)$ determined by the mesh $\T$ and by
the values $(\Delta t,N)$.

The space of piecewise constant functions is defined by
$$
  V_\T = \bigg\{v: \Omega\to\R:\exists (v_K)_{K\in\T}\subset\R,\
	v(x)=\sum_{K\in\T}v_K\mathbf{1}_K(x)\bigg\},
$$
where $\mathbf{1}_K$ is the characteristic function on $K$. To define a norm
on this space, we define for $v\in V_\T$, $K\in\T$, and $\sigma\in\E_K$,
$$
  v_{K,\sigma} = \begin{cases}
	v_L &\quad\mbox{if }\sigma=K|L\in\E_{{\rm int},K}, \\
	v_K &\quad\mbox{if }\sigma\in\E_{{\rm ext},K},
	\end{cases} \quad
	\textrm{D}_{K,\sigma} v := v_{K,\sigma}-v_K, \quad 
	\textrm{D}_\sigma v := |\mathrm{D}_{K,\sigma} v|.
$$
Let $1\le q<\infty$ and $v\in V_\T$.
The discrete $W^{1,q}(\Omega)$ norm on $V_\T$ is given by 
\begin{align*}
  & \|v\|_{1,q,\T} = \big(\|v\|_{0,q,\T}^q+|v|_{1,q,\T}^q\big)^{1/q}, 
	\quad\mbox{where} \\
	& \|v\|_{0,q,\T}^q = \sum_{K\in\T}\m(K)|v_K|^q, \quad
  |v|_{1,q,\T}^q = \sum_{\sigma\in\E}\mm(\sigma)\dist_\sigma
	\bigg|\frac{\text{D}_{\sigma} v}{\dist_\sigma}\bigg|^q,
\end{align*}
where $v\in V_\T$. If $v=(v_1,\ldots,v_n)\in V_\T^n$ is a vector-valued function, 
we write for notational convenience
$$
  \|v\|_{1,q,\T} = \sum_{i=1}^n\|v_i\|_{1,q,\T}.
$$
We associate to the discrete $W^{1,q}$ norm a dual norm with respect 
to the $L^2$ inner product:
$$
  \|v\|_{-1,q,\T} = \sup\bigg\{\int_\Omega vw \dd x: w\in V_\T,\
	\|w\|_{1,q,\T}=1\bigg\}.
$$
Then the following property holds:
$$
  \bigg|\int_\Omega vw \dd x\bigg| \le \|v\|_{-1,q,\T}\|w\|_{1,q,\T}
	\quad\mbox{for all }v,w\in V_\T,\   1<p<\infty.
$$

Finally, we introduce  the space $V_{\T,\Delta t}$ of piecewise constant
functions with values in $V_\T$,
\begin{equation*}
  V_{\T,\Delta t} = \bigg\{v:\Omega\times[0,T]\to\R:
	\exists(v^k)_{k=1,\ldots,N}\subset V_\T,\
	v(x,t) = \sum_{k=1}^{N} v^k(x)\mathbf{1}_{(t_{k-1},t_k]}(t)\bigg\},
\end{equation*}
equipped with the discrete $L^2(0,T;H^1(\Omega))$ norm
$$
  \left(\sum_{k=1}^{N}\Delta t \|v^k\|_{1,2,\T}^{2}\right)^{1/2} \quad 
	\mbox{for all }v\in V_{\T,\Delta t}.
$$ 

\subsection{Discrete gradient}

The discrete gradient is defined on a dual mesh. For this, we 
define the cell $T_{K,\sigma}$ of the dual mesh for $K\in\T$ and $\sigma\in\E_K$:
\begin{itemize}
\item Diamond: Let $\sigma=K|L\in \E_{{\rm int},K}$. Then $T_{K,\sigma}$ is that cell
whose vertices are given by $x_K$, $x_L$, and the end points of the edge $\sigma$.
\item Triangle: Let $\sigma\in\E_{{\rm ext},K}$. Then $T_{K,\sigma}$ is that cell
whose vertices are given by $x_K$ and the end points of the edge $\sigma$.
\end{itemize}
The union of all diamonds and triangles $T_{K,\sigma}$ equals the domain $\Omega$
(up to a set of measure zero).
The property that the straight line $\overline{x_Kx_L}$ between two neighboring centers
of cells is orthogonal to the edge $\sigma=K|L$ implies that
\begin{equation*}
  \mm(\sigma)\,\dist(x_K,x_L)=d\,\m(T_{K,\sigma})\quad\mbox{for all }
	\sigma=K|L\in\E_{\rm int}.
\end{equation*}
The approximate gradient of $v\in V_{\T,\Delta t}$ is then defined by
$$
  \na^{\mathcal{D}} v(x,t) = \frac{\mm(\sigma)}{\m(T_{K,\sigma})}
	\mathrm{D}_{K,\sigma}(v^k)\nu_{K,\sigma}
	\quad\mbox{for }x\in T_{K,\sigma},\ t\in{(t_{k-1},t_k]},
$$
where $\nu_{K,\sigma}$ is the unit vector that is normal to $\sigma$ and
points outwards of $K$. 

\subsection{Numerical scheme}

The initial functions are approximated by $u^0\in V_\T^n$ defined via
\begin{equation}\label{2.init}
  u_{i,K}^0 = \frac{1}{\m(K)}\int_K u_i^{\rm in}(x)dx \quad\mbox{for all } K\in\T,\ 
	i=1,\ldots,n.
\end{equation}
Let $u^{k-1}=(u^{k-1}_1,\dots,u^{k-1}_n)\in  V_\T^n$ be given. Then the values $u_{i,K}^k$ for all $K\in\T$ and $i=1,\ldots,n$ are determined
by the implicit Euler finite-volume scheme
\begin{align}\label{2.fvm}
  & \m(K)\frac{u_{i,K}^k-u_{i,K}^{k-1}}{\Delta t} 
	+ \sum_{\sigma\in\E_K}\mathcal{F}_{i,K,\sigma}^k = 0, \\
  \label{2.flux}
  & \mathcal{F}_{i,K,\sigma}^k = -\tau_\sigma u_{i,\sigma}^k
	\textrm{D}_{K,\sigma} p_i(u^k) 
    - \eta^\alpha\tau_\sigma\textrm{D}_{K,\sigma}u_i^k,
\end{align}
where $\eta=\max\{\Delta x,\Delta t\}$, $0<\alpha<2$, and
$\tau_\sigma$ is given by \eqref{2.trans}.
The mobility $u_{i,\sigma}^k$ is defined for $\sigma\in\E$ by the upwind scheme
\begin{equation}\label{2.upwind}
  u_{i,\sigma}^k =  \begin{cases}
  u_{i,K,\sigma}^k &\quad\mbox{if }\textrm{D}_{K,\sigma}p_i(u^k)\ge 0, \\
  u_{i,K}^k &\quad\mbox{if }\textrm{D}_{K,\sigma}p_i(u^k)< 0.
  \end{cases}
\end{equation}
The upwind approximation allows us to derive the discrete \SShannon entropy inequality; see Remark \ref{rem.upwind}. We may also use a logarithmic mean function; see Remark \ref{rem.log}.

We have added some artificial diffusion in the numerical flux 
$\mathcal{F}_{i,K,\sigma}^k$, which vanishes in the limit $\eta\to 0$.
The term is needed to show the convergence of the scheme. In particular, 
it provides an $\eta$-dependent bound for the full gradient, 
compensating the incomplete gradient estimate.
Note that the artificial diffusion is {\em not} needed
to prove the existence of discrete solutions, and we may set $\eta=0$ in this case. Artificial diffusion/viscosity is used in numerical approximations of the Euler equations to stabilize the scheme; see, e.g., \cite[(3.8)]{FLM20}.

The numerical fluxes $\mathcal{F}_{i,K,\sigma}^k$ are consistent approximations of the exact fluxes through the edges, since $\mathcal{F}_{i,K,\sigma}+\mathcal{F}_{i,L,\sigma}=0$ for all edges $\sigma=K|L$ and $\mathcal{F}_{i,K,\sigma}=0$ for all $\E_{{\rm ext},K}$.
The following discrete integration-by-parts formula holds for $v=(v_K)\in V_\T$:
\begin{equation}\label{2.ibp}
  \sum_{K\in\T}\sum_{\sigma\in\E_K}\mathcal{F}_{i,K,\sigma}v_K
	= -\sum_{\sigma\in\E_{\rm int}}\mathcal{F}_{i,K,\sigma} \mathrm{D}_{K,\sigma}v.
\end{equation}
Notice that the terms $\mathcal{F}_{i,K,\sigma}\mathrm{D}_{K,\sigma}v$ on the right-hand side only depend on $\sigma$, but not on the specific control volume $K$ satisfying $\sigma\in \mathcal{E}_K$. Hence, to evaluate the sum on the right, we may pick for each $\sigma$ any $K$ with $\sigma\in \mathcal{E}_K$ as long as we keep $K$ fixed.

\begin{remark}[Discrete gradient-flow property for upwind scheme]
\label{rem.upwind}\rm 
The upwind approximation implies a discrete gradient-flow property. Indeed, we first observe that the concavity of the logarithm gives
$$
  b(\log a-\log b)\le a-b\le a(\log a-\log b)\quad\mbox{for all }a,b>0.
$$
Combined with definition \eqref{2.upwind} of $u_{i,\sigma}^k$, this leads for $u_{i,K}^k>0$ and $u_{i,L}^k>0$ to
\begin{equation}\label{2.gf}
  u_{i,\sigma}^k(p_i(u_L^k)-p_i(u_K^k))(\log u_{i,L}^k-\log u_{i,K}^k)
  \ge (p_i(u_L^k)-p_i(u_K^k))(u_{i,L}^k - u_{i,K}^k)
\end{equation}
and therefore, by discrete integration by parts \eqref{2.ibp},
\begin{align}\label{2.remup}
  \sum_{i=1}^n&\sum_{K\in\T}\sum_{\sigma\in\E_K}
  \mathcal{F}_{i,K,\sigma}^k\log u_{i,K}^k 
  = -\sum_{i=1}^n\sum_{\sigma\in\E_{\rm int}}\tau_\sigma u_{i,\sigma}^k
  \textrm{D}_{K,\sigma}p_i(u^k)\textrm{D}_{K,\sigma}\log u_i^k \\
  &{}- \eta^{\alpha}\sum_{i=1}^n\sum_{\sigma\in\E_{\rm int}}\tau_\sigma
  \textrm{D}_{K,\sigma}u_i^k\textrm{D}_{K,\sigma}\log u_i^k
  \le -\sum_{i=1}^n\sum_{\sigma\in\E_{\rm int}}\tau_\sigma b_{ij}
  \textrm{D}_{K,\sigma}u_j^k\textrm{D}_{K,\sigma}u_i^k, \nonumber 
\end{align}
where we used the monotonicity of the logarithm implying that
$\textrm{D}_{K,\sigma}u_i^k\textrm{D}_{K,\sigma}\log u_i^k\ge 0$.
The right-hand side of \eqref{2.remup} is nonpositive due to the positive semidefiniteness of $B=(b_{ij})$. We deduce from this inequality the discrete entropy inequality \eqref{2.eiB}.
\end{remark}

\begin{remark}[Discrete gradient flow property for logarithmic mean]
\label{rem.log}\rm
We may alternatively define $u_{i,\sigma}^k$ via the logarithmic mean
\begin{equation}\label{2.logmean}
  u_{i,\sigma}^k = \begin{cases}\displaystyle
	\frac{u_{i,L}^k-u_{i,K}^k}{\log u_{i,L}^k-\log u_{i,K}^k}
	&\quad\mbox{if }u_{i,K}^k\neq u_{i,L}^k\mbox{ and }u_{i,K}^k>0,\ u_{i,L}^k>0, \\
	u_{i,K}^k &\quad\mbox{if }u_{i,K}^k=u_{i,L}^k>0, \\
	0 &\quad\mbox{else}.
	\end{cases}
\end{equation}
We remark that the artificial diffusion in the numerical flux \eqref{2.flux} allows us to show that $u_{i,K}^k$ is positive for all $K\in\T$ (see Section \ref{sec.pos}) such that $u_{i,\sigma}^k$ (for $\sigma=K|L$) is always defined by one of the first two cases.
Definition \eqref{2.logmean} also leads to a discrete gradient-flow property. Indeed, observing that $u_{i,\sigma}^k\mathrm{D}_{K,\sigma}\log u_i^k
= \mathrm{D}_{K,\sigma} u_i^k$ and 
multiplying \eqref{2.flux} by $\log u_{i,K}^k$ and
summing over all $i=1,\ldots,n$, $K\in\T$, and $\sigma\in\E_K$, we 
 see that \eqref{2.remup} holds too.
Notice that \eqref{2.gf} becomes an equality in this case. 
\end{remark}

Finally, we observe that the mobility satisfies in both cases the following properties:
\begin{equation}\label{2.meanprop}
  u_{i,\sigma}^k\le\max\{u_{i,K}^k,u_{i,L}^k\}, \quad
  |u_{i,\sigma}^k-u_{i,K}^k| \le |u_{i,K}^k-u_{i,L}^k|
\quad\mbox{for }\sigma=K|L.
\end{equation}

\subsection{Main results}\label{sec.main}

We impose the following hypotheses.

\begin{itemize} 
\item[(H1)] Data: $\Omega\subset\R^d$ is a bounded polygonal 
(or polyhedral if $d\ge 3$) domain, $T>0$, and $u^{\rm in}\in L^2(\Omega;\Rge^n)$ such that $\|u^{\rm in}\|_{L^1(\Omega)}>0$.
We set $\Omega_T=\Omega\times(0,T)$.
\item[(H2)] Discretization: $\mathcal{D}$ is an admissible discretization of 
$\Omega_T$ satisfying \eqref{2.regul}.
\item[(H3)] Diffusion coefficients:
$\BB=(b_{ij})\in\Rge^{n\times n}$ is symmetric positive semidefinite with
$\operatorname{rank}\BB\in\{1,\dots, n\}$ and $b_{ii}>0$ for $i=1,\dots,n$.
\end{itemize}
Note that since $\BB$ is positive semidefinite, 
its square root $\BB^{1/2}$ exists and 
$z^T\BB z=|\BB^{1/2}z|^2$ for $z\in\R^n$. Moreover, with $\lambda>0$ being the smallest positive eigenvalue of $\BB^{1/2}$, we have $|\BB^{1/2}z|\ge\lambda|\widehat z|$ (cf.\ Lemma~\ref{lem.APL}).

\begin{theorem}\label{thm.ex}
Let Hypotheses (H1)--(H3) hold, $k\in\N$, $\eta\ge 0$,
and let $u^{k-1}\in V_\T^n$ be given. 
Then there exists a solution $u^k=(u_1^k,\ldots,u_n^k)
\in V_\T^n$ to scheme \eqref{2.init}--\eqref{2.flux} satisfying
 $u_{i,K}^k > 0$ for $i=1,\ldots,n$, $K\in\T$.
 Inductively, let $u^j\in V_\T^n, j=1,\dots,k$, be the solution to scheme \eqref{2.init}--\eqref{2.flux} with $u^{k-1}$ replaced by $u^{j-1}$. Then $\{u^j\}$ obey the discrete
entropy inequalities
\begin{align}
  H_\Shn(u^k) + \sum_{j=1}^k\Delta t	|\BB^{1/2}u^j|_{1,2,\T}^2 
	+ 4\eta^\alpha\sum_{j=1}^k\Delta t\sum_{i=1}^n |(u_i^j)^{1/2}|_{1,2,\T}^2
	&\le H_\Shn(u^0), \label{2.eiB} \\
  H_R(u^k) 
	+ \sum_{j=1}^k\Delta t\sum_{i=1}^n\sum_{\sigma\in\E}
	\tau_\sigma u_{i,\sigma}^j|\mathrm{D}_{\sigma}(\BB u^j)_i|^2 
	&\le H_R(u^0). \label{2.eiR}
\end{align}
Moreover, $ H_R(u^k) \le  H_R(u^{k-1})$. 
\end{theorem}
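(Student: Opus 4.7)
The proof of Theorem~\ref{thm.ex} has three pieces: the Shannon inequality \eqref{2.eiB}, the Rao inequality \eqref{2.eiR} together with the single-step monotonicity, and existence/positivity of $u^k$. I would carry out both entropy calculations first at the formal level, assuming a positive $u^k$ is at hand, and then use the resulting a priori bounds to drive the existence argument.

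For \eqref{2.eiB} I would multiply \eqref{2.fvm} by $\Delta t\log u^k_{i,K}$ and sum over $i,K$. The convexity inequality $(a-b)\log a\ge h_\Shn(a)-h_\Shn(b)$ yields
\[
  \sum_{i,K}\m(K)\bigl(u^k_{i,K}-u^{k-1}_{i,K}\bigr)\log u^k_{i,K}\ge H_\Shn(u^k)-H_\Shn(u^{k-1}).
\]
The flux contribution is exactly the calculation of Remark~\ref{rem.upwind}: discrete integration by parts \eqref{2.ibp}, the upwind inequality \eqref{2.gf}, and the identity $\sum_{i,j}b_{ij}D_{K,\sigma}u^k_i D_{K,\sigma}u^k_j=(D_{K,\sigma}u^k)^T\BB D_{K,\sigma}u^k$ produce an upper bound $-\Delta t\,|\BB^{1/2}u^k|_{1,2,\T}^2$; the artificial-diffusion term is in turn bounded above by $-4\eta^\alpha\Delta t\sum_i|\sqrt{u^k_i}|_{1,2,\T}^2$ via the elementary inequality $(a-b)(\log a-\log b)\ge 4(\sqrt a-\sqrt b)^2$. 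Summation over time steps yields \eqref{2.eiB}. For \eqref{2.eiR} I would test \eqref{2.fvm} against $\Delta t\,(\BB u^k)_{i,K}$; since $h_R$ is a PSD quadratic, the exact Taylor identity gives $(\BB u^k)\cdot(u^k-u^{k-1})\ge h_R(u^k)-h_R(u^{k-1})$ cell-wise, and discrete IBP turns the flux term into $-\Delta t\sum_{i,\sigma}\tau_\sigma u^k_{i,\sigma}|D_{K,\sigma}(\BB u^k)_i|^2-\eta^\alpha\Delta t\sum_\sigma\tau_\sigma(D_{K,\sigma}u^k)^T\BB D_{K,\sigma}u^k$; both terms are nonpositive (the second by $\BB\succeq 0$), and the artificial-diffusion part can be dropped. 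The single-step monotonicity $H_R(u^k)\le H_R(u^{k-1})$ is then immediate, and summation gives \eqref{2.eiR}.

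For existence and positivity I would parametrize in entropy variables $u^k_{i,K}=\exp(w^k_{i,K})$, so that $u^k_{i,K}>0$ is built into the ansatz. Let $\Phi\colon\R^{n|\T|}\to\R^{n|\T|}$ be the continuous map given by the left-hand side of \eqref{2.fvm}--\eqref{2.flux} at $u=e^w$, and consider a homotopy $\Phi_\theta(w)=\theta\,\Phi(w)+(1-\theta)\,(w-w^\star)$ on a ball of radius $R$ in $\R^{n|\T|}$, with $w^\star_{i,K}=\log\bar u^{k-1}_i$ (spatial mean, regularized if this mean vanishes). Applying the Shannon calculation above to zeros of $\Phi_\theta$ bounds $H_\Shn(e^w)$ uniformly in $\theta$, and combined with the exact mass conservation $\sum_K\m(K)\exp(w^k_{i,K})=\sum_K\m(K)u^{k-1}_{i,K}$ (obtained by summing \eqref{2.fvm} over $K$ and using $\mathcal{F}_{i,K,\sigma}+\mathcal{F}_{i,L,\sigma}=0$) yields two-sided bounds on $w$ independent of $\theta$ and $R$. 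Hence $\Phi_\theta$ does not vanish on the boundary for $R$ large, and a topological-degree argument produces a zero of $\Phi_1=\Phi$, i.e.\ a solution $u^k\in V_\T^n$ with $u^k_{i,K}>0$.

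The main obstacle is precisely the uniform bound on the entropy variables when $\BB$ is rank-deficient: the Shannon dissipation controls only the $\operatorname{ran}\BB$-component of $D_{K,\sigma}u^k$, so the kernel modes of $u^k$ must be bounded indirectly through the $L\log L$-type entropy and the $L^1$ mass conservation alone. When $\eta>0$ the artificial diffusion provides a full-gradient coercivity that simplifies the bookkeeping, but the theorem allows $\eta=0$, and one must then close the argument using the entropy and mass bounds only, which is the delicate point of the existence step.
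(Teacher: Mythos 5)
Your two entropy computations are exactly the ones the paper uses: testing \eqref{2.fvm} with $\Delta t\,\log u_{i,K}^k$ and exploiting the upwind inequality \eqref{2.gf} for the Shannon part, and testing with $\Delta t\, p_i(u_K^k)$ together with the pointwise identity $\sum_{i,j}b_{ij}(u_{i,K}^k-u_{i,K}^{k-1})u_{j,K}^k\ge h_R(u_K^k)-h_R(u_K^{k-1})$ for the Rao part and the single-step monotonicity. That portion of the proposal is correct and matches the paper.

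The existence step, however, has a genuine gap. You claim that the Shannon bound $H_\Shn(e^w)\le C$ together with mass conservation yields ``two-sided bounds on $w$ independent of $\theta$ and $R$.'' It does not. Since $\shn(z)=z(\log z-1)+1$ remains bounded as $z\downarrow 0$, the entropy bound is perfectly compatible with $u_{i,K}=e^{w_{i,K}}\to 0$, i.e.\ $w_{i,K}\to-\infty$, on individual cells, and mass conservation only forbids \emph{all} cells of a given species from degenerating simultaneously. Hence there is no lower bound for $w$ on $\pa Z_R$ and your homotopy cannot be closed; note also that the convex combination $\Phi_\theta=\theta\Phi+(1-\theta)(w-w^\star)$ only provides coercivity in $w$ with a prefactor $1-\theta$, which degenerates precisely at the endpoint $\theta=1$ you need. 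Moreover, the obstacle is not the rank-deficiency of $\BB$ (kernel modes), but the singularity of the logarithm at the origin. The paper resolves this by inserting a genuine regularization into the scheme, namely a discrete version of $\eps(-\Delta w_i+w_i)$ in the entropy variables (see \eqref{3.lin}); the resulting a~priori estimate of Lemma~\ref{lem.dei} reads $\eps\Delta t\sum_{i}\|w_i^\eps\|_{1,2,\T}^2\le H_\Shn(u^{k-1})$ uniformly in the homotopy parameter $\rho$, which bounds $\|w^\eps\|_{1,2,\T}$ by an $\eps$-dependent radius and closes the degree argument at fixed $\eps>0$. The limit $\eps\to0$ is then performed in the variable $u^\eps=e^{w^\eps}\ge0$, where finite-dimensional compactness follows from the discrete $L^1$ bound, so the limit $u^k$ is a~priori only nonnegative; strict positivity is recovered afterwards (Section~\ref{sec.pos}) from the blow-up of the artificial-diffusion dissipation $\eta^\alpha(u_{i,L}-u_{i,K})(\log u_{i,L}-\log u_{i,K})$ when one cell value vanishes while a neighbour does not, combined with $\|u^{\rm in}\|_{L^1(\Omega)}>0$. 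In particular, the existence of a (nonnegative) solution does not hinge on $\eta>0$ at all --- the $\eps$-regularization does that work --- and your final paragraph misplaces the difficulty: without some regularization in the entropy variables the degree argument cannot produce the claimed solution, whether or not $\BB$ has full rank.
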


The existence of finite-volume solutions to \eqref{2.init}--\eqref{2.flux}
was shown in \cite{JuZu20} by using the Rao entropy only,
but the proof needs matrices $\BB$ with full rank.
We can avoid this condition since we exploit the estimates coming from 
the  \SShannon entropy. Theorem \ref{thm.ex} is proved by adding a discrete
version of the regularizing term $\eps(-\Delta w_i+w_i)$, where
$w_i= \log u_i$ are the entropic variables~\cite{FL_1971,Jue16,KS_1988}, 
and a topological degree argument, similar as in \cite{JuZu20}. 
Uniform estimates from the  \SShannon entropy inequality \eqref{2.eiB} allow us to perform the de-regularizing limit $\eps\to 0$.
Observe that the theorem is valid for $\eta=0$, i.e., no artificial diffusion is needed here.

 Theorem \ref{thm.ex} and the subsequent results also hold for 
domains $\Omega\subset\R^d$ with curved (Lipschitz) boundary. Indeed, one may triangulate $\Omega$ in such a way that the control volumes have a curved boundary \cite{Nab16}, or one may cover $\Omega$ by additional cells and estimate the integral error; we refer to Remark \ref{rem.curved} for details.

For the convergence result, 
we introduce a family $(\mathcal{D}_m)_{m\in\N}$ of admissible
space-time discretizations of $\Omega_T$ indexed by the size 
$\eta_m=\max\{\Delta x_m,\Delta t_m\}$ of the mesh, where 
$\Delta x_m=\max_{K\in\T_m}\operatorname{diam}(K)$ and $\Delta t_m$ is the
time step size of the mesh $\mathcal{D}_m$, satisfying $\eta_m\to 0$
as $m\to\infty$. We denote by $\T_m$ the corresponding meshes of $\Omega$
and set $\na^m:=\na^{\mathcal{D}_m}$. 

\begin{theorem}[Convergence of the scheme]\label{thm.conv}
Let Hypotheses (H1)--(H3) hold, and let $(\mathcal{D}_m)$ be a family
of admissible meshes satisfying \eqref{2.regul} uniformly in $m\in\N$.
Let $(u_m)$ be a sequence of finite-volume solutions to 
\eqref{2.init}--\eqref{2.flux} with $\eta=\eta_m>0$, 
constructed in Theorem~\ref{thm.ex}.
Then, up to a subsequence, $(u_m,\nabla^m\widehat u_m)$ generates a Young measure $\mu$ which is a dissipative measure-valued solution to \eqref{1.eq}, \eqref{1.bic} in the sense of Definition \ref{def.dmvs}.
 Moreover, the function
$t\mapsto H_R(u(t))$ is nonincreasing.
\end{theorem}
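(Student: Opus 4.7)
The plan is to combine the discrete entropy estimates from Theorem~\ref{thm.ex} with the fundamental theorem on Young measures, then pass to the limit in the scheme \eqref{2.fvm}--\eqref{2.flux} by lower semicontinuity arguments.

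\textbf{Uniform estimates and compactness.} The Rao inequality \eqref{2.eiR}, combined with the quadratic growth of $h_R$ (which uses $b_{ii}>0$ for all $i$), yields a uniform $L^\infty(0,\infty;L^2(\Omega))$ bound for $u_m$ and a uniform discrete bound for the edge quantity $\sqrt{u_{m,i,\sigma}^j}\,\mathrm{D}_\sigma(\BB u_m^j)_i$, hence, via Cauchy--Schwarz, a bound for $u_{m,i}\na^m(\BB u_m)_i$ in $L^2(0,T;L^{4/3}(\Omega))$. The Shannon inequality \eqref{2.eiB} controls $\na^m\widehat{u}_m$ in $L^2(\Omega_T)$ by means of $|\BB^{1/2}z|^2\ge\lambda|P_{L^\perp}z|^2$, and delivers a vanishing-in-the-limit bound of order $\eta^{\alpha/2}$ for $\na^m\sqrt{u_{m,i}}$, which upgrades, through the $L^\infty L^2$ estimate, to a vanishing $L^2$ bound on $\eta^{\alpha/2}\na^m u_{m,i}$ and thus supplies the small extra compactness absent in the rank-deficient case. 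Testing the scheme against discrete functions, using the duality formula \eqref{2.ibp} together with the mesh regularity \eqref{2.regul}, \eqref{2.sum}, yields a uniform estimate of the discrete time derivative in a space dual to a discrete $W^{1,4}$ (cf.~Lemma~\ref{lem.time}). A discrete Aubin--Lions argument then gives strong $L^2(\Omega_T)$ compactness of $\widehat{u}_m$ and weak $L^2$ compactness of $(u_m,\na^m\widehat{u}_m)$.

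\textbf{Young measure and passage to the limit.} Applying the fundamental theorem of Young measures to the pair $(u_m,\na^m\widehat{u}_m)$ with values in $W=\Rge^n\times(L^\perp)^d$, I extract a subsequence generating $\mu\in L^\infty_w(\Omega\times[0,\infty);\mathcal{P}(W))$ with barycenters $u=\la\mu,s\ra$ and $y=\la\mu,p\ra$; the weak limit $\na^m\widehat{u}_m\rightharpoonup\na\widehat{u}$ yields $y=\na\widehat{u}$. Strong convergence of $\widehat{u}_m$ forces the $\widehat{s}$-marginal of $\mu$ to be a Dirac mass at $\widehat{u}(x,t)$, which is exactly the separation property \eqref{2.sepa}. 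To establish \eqref{1.evol}, I test the scheme with a smooth $\phi\in C^1_c(\overline{\Omega}\times[0,T))$: the artificial diffusion contribution vanishes because of the $\eta^\alpha$ prefactor and the $\na^m\sqrt{u_m}$ bound, the time and initial-data terms are standard, and the nonlinear flux $u_{i,\sigma}^k\mathrm{D}_{K,\sigma}p_i(u^k)$ is identified in the limit with $\la\mu,s_i(\BB p)_i\ra$ through the Young measure representation, with the $L^2L^{4/3}$ bound ruling out concentration defects. For the entropy inequalities \eqref{1.eiBm} and \eqref{1.eiRm}, convexity of $h_S(s)$, $|\BB^{1/2}p|^2$, and $s\mapsto s_i|(\BB p)_i|^2$ combined with the Young measure representation yields the required lower semicontinuity of the dissipation terms in the limits of \eqref{2.eiB} and \eqref{2.eiR}; the initial data converge by \eqref{2.init}. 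Monotonicity of $t\mapsto H_R(u(t))$ descends from the discrete monotonicity $H_R(u_m^k)\le H_R(u_m^{k-1})$ together with strong $L^2$ convergence of $\widehat{u}_m$ and the identity $H_R(u)=\tfrac12\int_\Omega\widehat{u}^T\BB\widehat{u}\,\dd x$.

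\textbf{Main obstacle.} The principal difficulty is the identification of the nonlinear flux $u_{m,i}\na^m p_i(u_m)$ in the limit. Because the rank-deficient matrix $\BB$ provides no gradient control on $\ker\BB$, one cannot expect strong convergence of the full velocity field $\na^m p_i(u_m)$, and a genuine defect in the weak product may occur. The measure-valued framework is tailored to absorb this oscillation into $\mu$, while the $L^2L^{4/3}$ bound from the Rao estimate rules out concentration defects. A secondary technical point is the transition from the diamond-cell discrete gradient $\na^m$ to usable Sobolev-type information on the piecewise-constant reconstructions; this relies on the consistency relation $\mm(\sigma)\dist(x_K,x_L)=d\,\m(T_{K,\sigma})$ together with the mesh regularity \eqref{2.regul}, which is precisely what makes the discrete Aubin--Lions and the limit in \eqref{1.evol} pass through cleanly.
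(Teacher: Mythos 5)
Your overall strategy coincides with the paper's: entropy estimates, a discrete time-derivative bound, the Gallou\"et--Latch\'e compactness theorem for $\widehat u_m$, generation of a Young measure by $(u_m,\na^m\widehat u_m)$, and Fatou-type lower semicontinuity for the entropy inequalities. However, there is one concrete gap at the heart of the argument: the identification of the limit of the numerical flux. The Young measure $\mu$ is generated by the \emph{cell-centered} pair $(u_m,\na^m\widehat u_m)$, whereas the flux and the Rao dissipation involve the \emph{upwind edge mobility} $u_{m,i,\sigma}$, which is not a pointwise function of $(u_m,\na^m\widehat u_m)$. Writing ``the nonlinear flux $u_{i,\sigma}^k\mathrm{D}_{K,\sigma}p_i(u^k)$ is identified in the limit with $\la\mu,s_i(\BB p)_i\ra$ through the Young measure representation'' therefore does not follow from the representation theorem as stated. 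The missing step is to show that $u_{m,\sigma}-u_m\to 0$ strongly in $L^1(\Omega_T)$, so that $(u_{m,\sigma},\na^m\widehat u_m)$ generates the \emph{same} Young measure $\mu$; this is where the artificial diffusion is truly indispensable. One estimates $\|u_{m,i,\sigma}^k-u_{m,i}^k\|_{0,1,\T_m}\le C\eta_m|u_{m,i}^k|_{1,1,\T_m}$ using \eqref{2.meanprop} and $\m(T_{K,\sigma})=\dist_\sigma^2\tau_\sigma/d$, and then Lemma~\ref{lem.inv} gives $\sum_k\Delta t_m\|u_{m,i,\sigma}^k-u_{m,i}^k\|_{0,1,\T_m}^2\le C\eta_m^{2-\alpha}\to0$, which is precisely why the restriction $\alpha<2$ appears in the scheme. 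You derive the relevant $\eta^{-\alpha}$-weighted full-gradient bound in your first paragraph but only deploy it to kill the artificial-diffusion term in the weak formulation, never to reconcile the edge mobility with the cell value. The same issue recurs in the Rao dissipation term $u_{m,i,\sigma}|(\BB\na^m\widehat u_m)_i|^2$, whose lower semicontinuity also requires that $\mu$ be the Young measure of the edge-valued pair.

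A secondary, more minor point: the lower semicontinuity of the Rao dissipation is not a convexity statement --- the joint map $(s,p)\mapsto s_i|(\BB p)_i|^2$ is not convex. What is actually used is the Fatou-type lower bound for nonnegative continuous integrands under Young measure generation (\cite[Theorem~6.11]{Ped97}), together with equi-integrability where an equality of limits is needed (e.g.\ for $h_S(u_m)$, which follows from the uniform $L^2$ bound). Likewise, passing from the discrete entropy inequalities, which hold at the discrete times $t_k$, to the inequalities for a.e.\ $t>0$ requires a small additional argument (the paper multiplies by a monotone cut-off $-\xi'$ and shifts the dissipation integral by $\delta$); this is routine but should not be omitted entirely.
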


The strategy of the proof of Theorem \ref{thm.conv} is as follows.
The estimates from the discrete entropy inequalities and a uniform bound
for the discrete time derivative of $u_m$ allow us to apply the compactness result 
of \cite{GaLa12} to conclude the strong convergence of (a subsequence of)
$\widehat{u}_m$ in $L^2(\Omega_T)$ as $m\to\infty$. Moreover, $(u_m)$ 
and $\na^m(\BB\widehat{u}_m)$ are weakly converging in $L^2(\Omega_T)$. 
Clearly, these convergences are too weak to conclude the convergence 
of the nonlinear flux \eqref{2.flux}. 
However, the sequence $(u_m,\na^m\widehat{u}_m)$ 
generates a parametrized measure $\mu$ \cite[Chap.~6]{Ped97}
such that $\langle\mu,s_i(\BB p)_i\rangle$
is the distributional limit of $ u_{m,i,\sigma}\na^m(\BB\widehat{u}_m)_i$.
Moreover, because of the strong convergence of $(\widehat{u}_m)$, 
we can separate this part, leading to~\eqref{2.sepa}.

\begin{remark}[Full-rank approximation]\rm
Let $\Omega\subset\mathbb{R}^d$ be a bounded Lipschitz domain. An alternative to the finite-volume approach is to consider a suitable full-rank symmetric positive definite regularization $(\BB_\rho)\in \mathbb{R}^{n\times n}$ of $B$ with $\lim_{\rho\to 0}B_\rho=B$,
and to approximate~\eqref{1.eq} by
\begin{equation}\label{2.visc}
	\pa_t u_i = \diver(u_i\na(\BB_\rho u)_i), 
	\quad i=1,\ldots,n.
\end{equation}
After an appropriate additional regularization, it is possible to apply the entropy method of \cite[Sec.~4.4]{Jue16} (using the Rao entropy structure) and to establish the existence of a nonnegative weak solution to~\eqref{2.visc}, \eqref{1.bic} that satisfies both the Rao and Shannon entropy inequalities with $B$ replaced by $B_\rho$. The dissipative measure-valued solution to~\eqref{1.eq}, \eqref{1.bic} is then obtained in the limit $\rho\to 0$. 
\end{remark}

The statement of Theorem \ref{thm.conv} is rather weak, since the Young measure may not be unique. However, we can prove a weak--strong uniqueness result. According to Remark \ref{rem.curved}, we can assume in the following that $\Omega$ is a general bounded domain with Lipschitz boundary. 

\begin{theorem}[Weak--strong uniqueness]\label{thm.wsu}
Let $\Omega\subset\mathbb{R}^d$ be a bounded Lipschitz domain.
Let $v\in C^1(\overline\Omega\times[0,T];\Rge^n)$
be a positive solution to \eqref{1.eq}, \eqref{1.bic} (in the weak sense) with initial datum $v(0)=u^{\rm in}>0$,
and let $\mu$ be a dissipative measure-valued solution to \eqref{1.eq}, \eqref{1.bic}.
Then
$$
  \mu_{x,t} = \delta_{v(x,t)}\otimes\delta_{\na\widehat{v}(x,t)}\quad\mbox{for a.e. }
	(x,t)\in\Omega\times(0,T).
$$
\end{theorem}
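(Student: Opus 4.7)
The plan is a relative entropy (Bregman distance) calculation at the measure-valued level, based on the combined convex functional $h := h_\Shn + h_R$. Since $h$ is strictly convex with quadratic growth by (H3) and $v>0$ lies in $C^1(\overline\Om\times[0,T])$, the derivative $h'(v) = \log v + \BB v$ belongs to $C^1(\overline\Om\times[0,T])$. I would introduce
$$
\mathcal{R}(t) := \int_\Om \la\mu_{x,t}, h(s\mid v(x,t))\ra\,\dd x, \qquad h(s\mid v) := h(s) - h(v) - h'(v)\cdot(s-v) \ge 0,
$$
which is equivalent to $\int_\Om|u-v|^2\dd x$ with constants depending on $\|v\|_\infty$ and $\min v$. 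Initially, $\mathcal{R}(0)=0$: the weak evolution \eqref{1.evol} forces $u(0) = u^{\rm in} = v(0)$, and the entropy inequality as $t\to 0^+$ combined with Jensen's inequality under $\mu$ concentrates the $s$-marginal of $\mu_{x,0}$ on $u^{\rm in}(x)$. The goal is a differential inequality $\frac{\dd}{\dd t}\mathcal{R}\le C(\|v\|_{C^1},\min v)\,\mathcal{R}$, from which Gronwall closes the argument.

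\textbf{Term-by-term differentiation.} Expanding $\mathcal{R} = \int\la\mu,h(s)\ra\dd x - \int h(v)\dd x - \int h'(v)\cdot(u-v)\dd x$, I would bound each piece using: (i) the dissipative entropy inequalities \eqref{1.eiBm}--\eqref{1.eiRm}, contributing $-D^{\rm mv}(\mu) := -\int\la\mu,|\BB^{1/2}p|^2\ra - \sum_i\int\la\mu,s_i|(\BB p)_i|^2\ra$ to $\frac{\dd}{\dd t}\mathcal{R}$; (ii) the classical chain rule for the smooth solution $v$, giving $\frac{\dd}{\dd t}\int h(v) = -\sum_{ij}b_{ij}\int\na v_i\cdot\na v_j - \sum_i\int v_i|\na(\BB v)_i|^2$; and (iii) for the cross term, the product rule $\frac{\dd}{\dd t}\int h'(v)\cdot u = \int(h''(v)\pa_t v)\cdot u - \sum_i\int\la\mu,s_i(\BB p)_i\ra\cdot\na h'(v)_i$, where the latter uses \eqref{1.evol} tested against $h'(v)(\cdot,\tau)$ after a standard time cutoff to comply with the $C^1_c(\overline\Om\times[0,T))$ requirement.

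\textbf{Completing the square.} After the expected cancellation of $-\frac{\dd}{\dd t}\int h(v)$ against the smooth chain rule $\int h'(v)\cdot\pa_t v = -\sum_{ij}b_{ij}\int\na v_i\cdot\na v_j - \sum_i\int v_i|\na(\BB v)_i|^2$ produced inside the cross-term computation, the key algebraic identity to exploit is
$$
\int\la\mu, |\BB^{1/2}(p-\na\widehat v)|^2\ra\,\dd x + \sum_i\int\la\mu, s_i|(\BB p)_i - \na(\BB v)_i|^2\ra\,\dd x \ge 0,
$$
which rewrites $D^{\rm mv}(\mu)$ minus cross contributions plus the $v$-dissipation in perfect-square form. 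Only $P_{L^\perp}\na v = \na\widehat v$ survives in the Shannon square because $\BB$ annihilates $L$. The residual non-square terms are of two types: quadratic-in-$(u-v)$ contributions from $\int(h''(v)\pa_t v)\cdot(v-u)\dd x$, controlled directly by $C\|v\|_{C^1}\,\mathcal{R}$ via the Bregman equivalence; and mixed terms involving $\na\log v$, which require pairing against the Rao dissipation using the positivity $v \ge \min v > 0$. Consolidating yields the desired Gronwall estimate.

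\textbf{Gronwall and identification.} Gronwall with $\mathcal{R}(0)=0$ forces $\mathcal{R}\equiv 0$ on $[0,T]$. Strict convexity of $h$ concentrates the $s$-marginal of $\mu_{x,t}$ on $v(x,t)$. Combining this with the separation property \eqref{2.sepa} (which after $s$-atomicity forces independence of the $\widehat s$-direction to be tautological) and the barycenter identity $\la\mu,p\ra = \na\widehat u = \na\widehat v$, Jensen's inequality applied to the $p$-marginal together with the Rao dissipation $\int\la\mu,s_i|(\BB p)_i-\na(\BB v)_i|^2\ra = 0$ (which holds at $s=v$ only if $(\BB p)_i = \na(\BB v)_i$ $\mu$-a.e.) forces the $p$-marginal to be $\delta_{\na\widehat v(x,t)}$, giving $\mu_{x,t}=\delta_{v(x,t)}\otimes\delta_{\na\widehat v(x,t)}$.

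\textbf{Main obstacle.} The delicate point is the algebraic completion of squares: since $\mu$ is not assumed atomic in $p$, the flux $\la\mu,s_i(\BB p)_i\ra$ cannot be replaced by $u_i(\BB\na\widehat u)_i$, and the squares must be completed \emph{under} the measure using only $\la\mu,p\ra=\na\widehat u$. In particular, the Shannon cross terms involve the singular factor $\na v_i/v_i = \na\log v_i$ without an $s$-weight, so they must be matched against the measure-valued Shannon dissipation $\int\la\mu,|\BB^{1/2}p|^2\ra$ rather than Rao, which in turn requires carefully distributing the $B$ structure between the $s$- and $p$-variables in the square-completion bookkeeping. The positivity $v\ge \min v >0$ and the $C^1$-regularity of $v$ are used precisely to control these residuals by $\mathcal{R}$ itself.
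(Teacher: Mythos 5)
Your proposal follows essentially the same route as the paper: a Bregman relative entropy for $h_\Shn+h_R$ obtained by testing the measure-valued evolution equation against $h'(v)=\log v+\BB v$ and combining with the two entropy inequalities, completion of squares under the measure $\mu$, the coercivity of $h_\Shn+h_R$ at infinity (supplied by the Rao part, so no $L^\infty$ bound on the densities is needed) to close a Gronwall estimate from $H_{\rm rel}(u^{\rm in}|v(0))=0$, and identification of the $p$-marginal from the vanishing dissipation square together with injectivity of $\BB^{1/2}$ on $L^\perp$. The only wobble is your internal inconsistency about which dissipation absorbs the cross terms $\langle\mu,(v_i-s_i)\na\log v_i\cdot(\BB(p-\na v))_i\rangle$: your ``Main obstacle'' paragraph has it right --- they must go into the Shannon square $\langle\mu,|\BB^{1/2}(p-\na v)|^2\rangle$, since the Rao square carries the weight $s_i$ whose reciprocal is not controlled by the relative entropy --- and that is exactly what the paper does.
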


The assertion is deduced from a stability estimate based on the 
Bregman distance $h(u|v):=h(u)-h(v)-h'(v)\cdot(u{-}v)$
associated with the convex function $h:=h_S+h_R$, which has to be adapted to the measure-valued framework.
Loosely speaking, we consider the sum $H_S(u|v)+H_{R}(u|v)$, where
$$
  H_k(u|v) = \sum_{i=1}^n\int_\Omega\big(h_k(u)-h_k(v)-h_k'(v)\cdot(u-v)\big)\dd x,
	\quad k=\Shn,R,
$$
and compute its time derivative along solutions to \eqref{1.eq}.
Certain error terms arising in this computation need to be estimated above by $C\int_\Omega h(u|v)\dd x$.  For this step and in the absence of $L^\infty(\Omega)$-bounds on the densities $u_i$, we take advantage of the better coercivity properties at infinity of the Rao entropy. 

As a consequence of Theorem \ref{thm.wsu}, the finite-volume solution converges \textit{strongly} to the classical solution if the latter exists. 

\begin{corollary}\label{cor:strong}
Let $u\in C^1(\overline\Omega\times[0,T];\Rge^n)$
be a positive solution to \eqref{1.eq}, \eqref{1.bic}.
Let $(u_m)$ be a sequence of finite-volume solutions to \eqref{2.init}, \eqref{2.flux} with $\eta=\eta_m>0$. Then, as $m\to\infty$,
\begin{align*}
	(u_m,\na^m\widehat{u}_m)\to (u,\nabla \widehat{u})\quad
		\text{strongly in $L^p(\Omega_T)$} 
\end{align*}
 for all $p\in [1,2)$ and all $T>0$.
\end{corollary}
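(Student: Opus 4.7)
The plan is to leverage Theorems \ref{thm.conv} and \ref{thm.wsu} together with the standard Young-measure principle that Dirac-valued Young measures correspond to sequences converging strongly.

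First, I would appeal to a subsequence argument: to establish convergence of the full sequence $(u_m,\na^m\widehat{u}_m)$ to $(u,\na\widehat u)$ in $L^p(\Omega_T)$, it suffices to show that from every subsequence one can extract a further subsequence converging to this limit. This reduction permits free extraction of subsequences in the steps below.

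Given an arbitrary subsequence, Theorem \ref{thm.conv} provides a further subsequence whose pair $(u_m,\na^m\widehat u_m)$ generates a Young measure $\mu$ that is a dissipative measure-valued solution to \eqref{1.eq}, \eqref{1.bic} with initial datum $u^{\rm in}=u(0)$. Since $u\in C^1(\overline\Omega\times[0,T];\Rge^n)$ is a positive classical solution sharing this initial datum, the weak--strong uniqueness statement of Theorem \ref{thm.wsu} forces $\mu_{x,t}=\delta_{u(x,t)}\otimes\delta_{\na\widehat{u}(x,t)}$ for a.e.\ $(x,t)\in\Omega_T$.

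The final step converts this atomicity into strong convergence. A standard fact of Young-measure theory (cf.\ \cite[Chap.~6]{Ped97}) states that whenever a sequence generates a Dirac-valued Young measure, the sequence itself converges in measure to the atom; hence $(u_m,\na^m\widehat u_m)\to(u,\na\widehat u)$ in measure on $\Omega_T$. Combined with the uniform $L^2(\Omega_T)$ bounds on $u_m$ (coming from the Rao-entropy estimate $H_R(u^k)\le H_R(u^0)$ together with $b_{ii}>0$) and on $\na^m\widehat u_m$ (coming from the Shannon entropy dissipation $\sum_j\Delta t\,|B^{1/2}u^j|_{1,2,\T}^2$ together with the spectral-gap bound $|B^{1/2}z|^2\ge\lambda|\widehat z|^2$ noted after (H3)), equi-integrability of $|u_m|^p$ and $|\na^m\widehat u_m|^p$ for any $p<2$ follows, so that Vitali's theorem upgrades convergence in measure to strong convergence in $L^p(\Omega_T)$. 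Since the limit does not depend on the chosen subsequence, the whole sequence converges.

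I do not anticipate a substantial obstacle here, since all analytical difficulty has been absorbed into Theorems \ref{thm.conv} and \ref{thm.wsu}. The only technical point requiring care is the uniform $L^2$-control on the projected gradient $\na^m\widehat u_m$ (and not $\na^m u_m$ as a whole, which is uncontrolled when $B$ is rank-deficient); this is precisely what makes the Vitali step applicable to the gradient component of the pair and yields the stated conclusion.
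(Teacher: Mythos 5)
Your proposal is correct and follows essentially the same route as the paper: extract a subsequence generating a Young measure via Theorem~\ref{thm.conv}, identify it as the Dirac measure at the classical solution via Theorem~\ref{thm.wsu}, and upgrade to strong $L^p$ convergence for $p<2$ using the equi-integrability of $|(u_m,\na^m\widehat{u}_m)|^p$ coming from the uniform $L^2$ bounds (the paper cites \cite[Theorem~6.12]{Ped97} where you invoke convergence in measure plus Vitali, which is the same content). Your explicit subsequence-of-subsequences reduction is a welcome clarification of a step the paper leaves implicit.
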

Indeed, the weak--strong uniqueness implies that the Young measure generated by $(u_m,$ $\na^m\widehat{u}_m)$ coincides at each point $(x,t)$ with the Dirac measure concentrated at the smooth solution. Since $|(u_m,\na^m\widehat{u}_m)|^p\subset L^1(\Om_T)$ is equi-integrable for every $p\in[1,2)$, the assertion in Corollary~\ref{cor:strong} thus follows from classical Young measure theory (cf.~e.g.~\cite[Theorem~6.12]{Ped97}).

It is shown in~\cite[Theorem 2.6]{DHJ22} for $\Omega=\mathbb{T}^d$ (with periodic boundary conditions) that problem \eqref{1.eq}, \eqref{1.bic} possesses a positive classical solution on a short time interval if the initial data are positive and smooth. The main results in the present paper should equally be valid in the periodic setting.
 
If $B$ has a non-trivial kernel, steady states to \eqref{1.eq}, \eqref{1.bic} are not necessarily constant in space
and for any fixed mass vector $\mathsf{m}\in(0,\infty)^n$, there exist infinitely many steady states. Given such $\mathsf{m}$, we define the space of steady states as
$$
  \mathfrak{S}_\mathsf{m} = \bigg\{v\in L^2(\Omega;\Rge^n): \int_\Omega v\,\dd x=\mathsf{m}\mbox{ and }
	\na(\BB v)=0 \mbox{ in }\Omega\bigg\}.
$$
\begin{theorem}[Long-time behavior]\label{thm.time}
Let $\mu$ be a dissipative measure-valued solution to \eqref{1.eq}, \eqref{1.bic}.
Let $u=\langle\mu,s\rangle$ and set $\mathsf{m}:=\int_\Omega u^{\rm in}dx$. Then 
$\mathfrak{S}_\mathsf{m}\subset L^\infty(\Omega;\Rge^n)$ and there
exists $u^*\in \mathfrak{S}_\mathsf{m}$ such that, as $t\to\infty$,
$$
  \widehat{u}(t)\to \widehat{u}^* \quad\mbox{strongly in }L^2(\Omega;\Rge^n),
$$
where $\widehat{u}^*=P_{L^\perp}u^*$. We recall that $P_{L^\perp}$ is
the projection onto $L^\perp=\operatorname{ran}\BB$.
\end{theorem}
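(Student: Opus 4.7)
I analyze the steady-state set first. For $v\in\mathfrak{S}_\mathsf{m}$, the condition $\na(\BB v)=0$ combined with the invertibility of $\BB$ on $L^\perp=\operatorname{ran}\BB$ forces $\widehat v:=P_{L^\perp}v$ to be constant in $\Omega$, and the mass constraint pins it down to $\widehat v=P_{L^\perp}\mathsf{m}/|\Omega|$, independent of $v$. Writing $v(x)=\widehat v+w(x)$ with $w(x)\in L$, the pointwise constraint $v\ge 0$ becomes the polyhedral condition $w(x)\in\{w\in L:\widehat v+w\ge 0\}$. Hypothesis~(H3) implies $L\cap\Rge^n=\{0\}$: any $w\in L$ with $w\ge 0$ satisfies $0=w^T\BB w\ge\sum_{i=1}^n b_{ii}w_i^2$, forcing $w=0$. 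Hence this polyhedron has trivial recession cone, is bounded, and $\mathfrak{S}_\mathsf{m}\subset L^\infty(\Omega;\Rge^n)$ uniformly. The constant $u^*:=\mathsf{m}/|\Omega|$ then lies in $\mathfrak{S}_\mathsf{m}$, and $\widehat u^*:=P_{L^\perp}u^*=P_{L^\perp}\mathsf{m}/|\Omega|$ provides the candidate limit.

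Next, I derive a space-time integrability estimate for $\na\widehat u$. Since $h_\Shn\ge 0$, the Shannon entropy inequality~\eqref{1.eiBm} gives
\[
\int_0^\infty\!\!\int_\Omega\la\mu_{x,\tau},|\BB^{1/2}p|^2\ra\,\dd x\,\dd\tau\le H_\Shn(u^{\rm in})<\infty.
\]
Because $p$ takes values in $(L^\perp)^d$ and $|\BB^{1/2}z|^2\ge\lambda|z|^2$ for $z\in L^\perp$ (with $\lambda>0$ the smallest positive eigenvalue of $\BB$), Jensen's inequality $|\na\widehat u|^2=|\la\mu,p\ra|^2\le\la\mu,|p|^2\ra$ yields $\na\widehat u\in L^2(0,\infty;L^2(\Omega))$. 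Conservation of mass, obtained by testing~\eqref{1.evol} with a spatially constant $\phi=\chi(t)$, shows that the spatial mean of $\widehat u(t)=P_{L^\perp}u(t)$ equals $\widehat u^*$ for a.e.~$t$. The Poincar\'e--Wirtinger inequality on $\Omega$ then produces $\|\widehat u(t)-\widehat u^*\|_{L^2}^2\le C_P\|\na\widehat u(t)\|_{L^2}^2$, and integrating in time yields $\int_0^\infty\|\widehat u(t)-\widehat u^*\|_{L^2}^2\,\dd t<\infty$.

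Finally, I upgrade this integrability to a pointwise limit using the Rao entropy. Since $\widehat u^*$ is constant and $\widehat u(t)-\widehat u^*$ has vanishing spatial mean, the cross term in the expansion of $H_R(u(t))=\tfrac12\|\BB^{1/2}\widehat u(t)\|_{L^2}^2$ vanishes, so
\[
H_R(u(t))-H_R(u^*)=\tfrac12\|\BB^{1/2}(\widehat u(t)-\widehat u^*)\|_{L^2}^2.
\]
The Rao entropy inequality, applied between arbitrary pairs of times (cf.~Theorem~\ref{thm.conv}), makes $t\mapsto H_R(u(t))$ nonincreasing, so $\|\BB^{1/2}(\widehat u(t)-\widehat u^*)\|_{L^2}^2$ admits a limit as $t\to\infty$; combined with the time-integrability just established (and $\|\BB^{1/2}\cdot\|^2\le\|\BB\|_{\rm op}\|\cdot\|^2$) this limit can only be $0$. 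The coercivity $|\BB^{1/2}z|^2\ge\lambda|z|^2$ on $L^\perp$ then upgrades the convergence to $\widehat u(t)\to\widehat u^*$ in $L^2(\Omega)$.

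The delicate point is precisely this last step: without the monotonicity of $H_R$, one could only recover convergence along a subsequence; it is the monotone Lyapunov structure (i.e., the Rao inequality holding between any pair of times) that converts mere time-integrability into genuine pointwise-in-$t$ convergence. A secondary subtlety is the $L^\infty$ inclusion in Step~1, where hypothesis~(H3) enters crucially through $L\cap\Rge^n=\{0\}$ to rule out unbounded recession directions in the affine slice of the positive orthant.
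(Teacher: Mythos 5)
Your proof is correct and rests on the same core mechanism as the paper's: the Rao entropy is a monotone Lyapunov functional (a fact that, as you correctly note, comes from Theorem~\ref{thm.conv} rather than from Definition~\ref{def.dmvs} alone --- the paper's own proof invokes it from Section~\ref{sec.ent.ineq} in exactly the same way), and integrability of the dissipation forces its limit to be zero, which upgrades to full convergence of $\widehat{u}(t)$ via the coercivity of $\BB^{1/2}$ on $L^\perp$. Where you genuinely deviate is in how the limit is identified. The paper extracts a sequence $t_k\to\infty$ along which $\na(\BB^{1/2}u(t_k))\to0$, uses the uniform $L^2$ bound and Rellich compactness to get $\BB^{1/2}u(t_k)\to \BB^{1/2}u^*$ strongly for some weak limit $u^*$, and only then verifies $u^*\in\mathfrak{S}_{\mathsf{m}}$. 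You instead observe that every element of $\mathfrak{S}_{\mathsf{m}}$ has the \emph{same} $L^\perp$-projection, namely the constant $P_{L^\perp}\mathsf{m}/|\Omega|$, take $u^*=\mathsf{m}/|\Omega|$ explicitly, and combine mass conservation with the Poincar\'e--Wirtinger inequality to get $\int_0^\infty\|\widehat u(t)-\widehat u^*\|_{L^2}^2\,\dd t<\infty$ directly; the identity $H_R(u(t))-H_R(u^*)=\tfrac12\|\BB^{1/2}(\widehat u(t)-\widehat u^*)\|_{L^2}^2$ (which you verify correctly via the zero-mean cross term) then closes the argument without any compactness extraction. Your route is slightly more self-contained and makes the limit explicit; the paper's route avoids Poincar\'e and would generalize to settings where the mean of $\widehat u$ does not determine the steady state. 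Your recession-cone argument for $\mathfrak{S}_{\mathsf{m}}\subset L^\infty$ is also valid, though more roundabout than the paper's one-line bound $v_i\le(\BB\mathsf{m})_i/(b_{ii}|\Omega|)$.
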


For the proof of Theorem~\ref{thm.time}, we argue as follows. The fact that $\int_0^\infty\|\na(\BB^{1/2}\widehat{u})\|^2_{L^2(\Omega)}\dd t$ is finite implies the existence of a sequence $t_k\to\infty$ such that
$k\mapsto(\BB^{1/2}u)(t_k)$ converges strongly in $L^2(\Omega)$ to
$\BB^{1/2}u^*$ as $k\to\infty$, where $u^*\in \mathfrak{S}_\mathsf{m}$. The monotonicity of $t\mapsto H_R(u(t)|u^*)$ then shows that $\BB^{1/2}\widehat{u}(t)$ converges and consequently, $\widehat{u}(t)$ converges to $\widehat{u}^*$ for all
sequences $t\to\infty$. Such reasoning is classical in degenerate cases, where entropy--entropy dissipation estimates are not available;
see for instance~\cite{CCLR_2016, Hopf_2022_SingularitiesFP}.

\section{Discrete problem}
\label{sec.ex}

In this section, we prove Theorem \ref{thm.ex}.
The existence proof uses a discrete analog of the entropy
method for cross-diffusion systems \cite{Jue16}.
We first introduce a regularized numerical scheme involving an approximation 
parameter $\eps>0$, prove the existence of a solution to this scheme and
suitable estimates coming from the  \SShannon entropy inequality, and apply 
a topological degree argument. The uniform estimates allow us to perform
the limit $\eps\to 0$.

\subsection{Definition and continuity of the fixed-point operator}

Let $u^{k-1}\in V_\T^n$ be given and let $R>0$, $\delta>0$. We set
$$
  Z_R = \big\{w=(w_1,\ldots,w_n)\in V_\T^n: \|w\|_{1,2,\T}<R
	\mbox{ for }i=1,\ldots,n\big\}
$$
and define the mapping $F:Z_R\to \R^{\theta n}$ by 
$F(w)=w^\eps$, where
$\theta=\#\T$ and $w^\eps=(w_1^\eps,\ldots,w^\eps_n)$ is the solution
to the linear regularized problem
\begin{equation}\label{3.lin}
  \eps\bigg(-\sum_{\sigma\in\E_K}\tau_\sigma\mathrm{D}_{K,\sigma} w_i^\eps
	+ \m(K)w_{i,K}^\eps\bigg) = -\bigg(\frac{\m(K)}{\Delta t}(u_{i,K} - u_{i,K}^{k-1})
	+ \sum_{\sigma\in\E_K}\mathcal{F}_{i,K,\sigma}\bigg),
\end{equation}
where $u_{i,K} := \exp(w_{i,K})$ and $\mathcal{F}_{i,K,\sigma}$ is defined
as in \eqref{2.flux} with $u_{i,K}^k$ replaced by $u_{i,K}$.

To show that $F$ is well defined, 
we write \eqref{3.lin} as
\begin{align}\label{3.M}
	\begin{aligned}
  & Mw^\eps = v, \quad\mbox{where } v = (v_{i,K})_{i=1,\ldots,n,\,K\in\T}, \\
	& v_{i,K} = \frac{\m(K)}{\Delta t}(u_{i,K} - u_{i,K}^{k-1})
	+ \sum_{\sigma\in\E_K}\mathcal{F}_{i,K,\sigma}, 
	\end{aligned}
\end{align}
and $M=\operatorname{diag}(M',\ldots,M')\in\R^{\theta n\times\theta n}$ 
is a block diagonal matrix with $M'\in\R^{\theta\times\theta}$, which has the entries
$$
  M'_{K,K} = -\eps\m(K) - \eps\sum_{\sigma\in\E_K}\tau_\sigma, \quad
	M'_{K,L} = \begin{cases}
	\eps\tau_\sigma &\quad\mbox{if }K\cap L\neq\emptyset,\ \sigma=K|L, \\
	0 &\quad\mbox{if }K\cap L=\emptyset.
	\end{cases}
$$
Therefore, the system $Mw^\eps=v$ can be decomposed into the independent subsystems
$M'w_i^\eps=v_i$ for $i=1,\ldots,n$. Since $M'$ is strictly diagonally dominant,
these subsystems possess a unique solution $w_i^\eps$. Then 
$w^\eps=(w_1^\eps,\ldots,w_n^\eps)$ is the unique solution to \eqref{3.M}.
Thus, the mapping $F$ is well defined. 

Next, we prove that $F$ is continuous. We multiply \eqref{3.lin} for some
fixed $i\in\{1,\ldots,n\}$ by $w_{i,K}^\eps$ and sum over all $i=1,\ldots,n$
and $K\in\T$:
\begin{align}\label{3.aux}
	\begin{aligned}
  -\eps&\sum_{i=1}^n\sum_{K\in\T}\sum_{\sigma\in\E_K}\tau_\sigma
	(\mathrm{D}_{K,\sigma}w_i^\eps)w_{i,K}^\eps 
	+ \eps\sum_{i=1}^n\sum_{K\in\T}\m(K)(w_{i,K}^\eps)^2 \\
	&= -\sum_{i=1}^n\sum_{K\in\T}\frac{\m(K)}{\Delta t}(u_{i,K}-u_{i,K}^{k-1})w_{i,K}^\eps
	- \sum_{i=1}^n\sum_{K\in\T}\sum_{\sigma\in\E_K}\mathcal{F}_{i,K,\sigma}w_{i,K}^\eps.
\end{aligned}
\end{align}
Using discrete integration by parts analogous to~\eqref{2.ibp}, we can rewrite the left-hand side as
\begin{align*}
  -\eps&\sum_{i=1}^n\sum_{K\in\T}\sum_{\sigma\in\E_K}\tau_\sigma
	(\mathrm{D}_{K,\sigma}w_i^\eps)w_{i,K}^\eps 
	+ \eps\sum_{i=1}^n\sum_{K\in\T}\m(K)(w_{i,K}^\eps)^2 \\
	&= \eps\sum_{i=1}^n\sum_{\sigma\in\E_{\rm int}}\tau_\sigma
	(\mathrm{D}_{K,\sigma}w_i^\eps)^2
	+ \eps\sum_{i=1}^n\sum_{K\in\T}\m(K)(w_{i,K}^\eps)^2 
	= \eps\sum_{i=1}^n\|w_i^\eps\|_{1,2,\T}^2.
\end{align*}

We turn to the terms on the right-hand side of \eqref{3.aux}.
By definition, we have $\|w_i\|_{1,2,\T}<R$ and consequently
$\|w_i\|_{0,\infty,\T}\le C(R,\T)$ and $\|u_i\|_{1,2,\T}\le C(R,\T)$
(since the problem is finite-dimensional).
This shows that
\begin{align*}
  -\sum_{i=1}^n\sum_{K\in\T}\frac{\m(K)}{\Delta t}(u_{i,K}-u_{i,K}^{k-1})
	w_{i,K}^\eps
	&\le \frac{1}{\Delta t}\sum_{i=1}^n\|u_{i}-u_{i}^{k-1}\|_{0,2,\T}
	\|w_{i}^\eps\|_{0,2,\T} \\
	&\le C(R,\T,\Delta t)\sum_{i=1}^n\|w_i^\eps\|_{1,2,\T}.
\end{align*}
Finally, using definition \eqref{2.flux} of the flux and discrete integration
by parts,
\begin{align*}
  -\sum_{i=1}^n&\sum_{K\in\T}\sum_{\sigma\in\E_K}\mathcal{F}_{i,K,\sigma}w_{i,K}^\eps
	= \sum_{i=1}^n\sum_{K\in\T}\sum_{\sigma\in\E_K}\tau_\sigma\bigg(\sum_{j=1}^n 
	b_{ij}u_{i,\sigma}(\mathrm{D}_{K,\sigma}u_{j}) 
	+ \eta^\alpha\mathrm{D}_{K,\sigma}u_i\bigg)w_{i,K}^\eps \\
	&= -\sum_{i=1}^n\sum_{\sigma\in\E_{\rm int}}
	\tau_\sigma\bigg(\sum_{j=1}^n b_{ij} u_{i,\sigma}
  (\mathrm{D}_{K,\sigma}u_{j})(\mathrm{D}_{K,\sigma}w_{i}^\eps)
	+ \eta^\alpha(\mathrm{D}_{K,\sigma}u_i)(\mathrm{D}_{K,\sigma}w_{i}^\eps)\bigg) \\
	&\le \max_{\sigma\in\E}\|u_{i,\sigma}\|_{0,\infty,\T}
	\sum_{i,j=1}^n b_{ij}|u_j|_{1,2,\T}|w_i^\eps|_{1,2,\T}
	+ \eta^\alpha\sum_{i=1}^n |u_i|_{1,2,\T}|w_i^\eps|_{1,2,\T} \\
	&\le C(R,\T)\|w_i^\eps\|_{1,2,\T}.
\end{align*}
For the last inequality, we used the fact that $u_{i,\sigma}$ depends on
$u_{i,K}$ and $u_{i,L}$ for $\sigma=K|L$, and their discrete $L^\infty(\Omega)$ norms can be
bounded by the discrete $L^\infty(\Omega)$ norm of $w_i$, which in turn can be estimated by $C(\T)\|w_i\|_{0,\infty,\T}\le C(R,\T)$. 

Inserting these estimates into \eqref{3.aux} and dividing by $\|w_i^\eps\|_{1,2,\T}$ if 
$\|w_i^\eps\|_{1,2,\T}>0$,
it follows that $\eps\|w_i^\eps\|_{1,2,\T}\le C(R,\T,\Delta t)$. This bound allows us to
verify the continuity of $F$. Indeed, let $w^\ell\to w$ as $\ell\to\infty$ and
set $w^{\eps,\ell}=F(w^\ell)$. Then $(w^{\eps,\ell})_{\ell\in\N}$ is uniformly
bounded in the discrete $H^1(\Omega)$ norm. Therefore, there exists a subsequence,
which is not relabeled, such that $w^{\eps,\ell}\to w^\eps$ as $\ell\to\infty$.
Passing to the limit $\ell\to\infty$ in scheme \eqref{3.lin}, we see that
$w^\eps$ is a solution to the scheme and $w^\eps=F(w)$. 
Since the solution to the
linear scheme \eqref{3.lin} is unique, the entire sequence $(w^{\eps,\ell})_{\ell\in\N}$
converges to $w^\eps$, which shows the continuity of $F$. 

\subsection{Existence of a fixed point}\label{sec.fp}

We will now show that the map $F$ admits a fixed point by using a topological degree
argument. We prove that $\operatorname{deg}(I-F,Z_R,0)=1$, where deg is the 
Brouwer topological degree \cite[Chap.~1]{Dei85}.
Since deg is invariant by homotopy, it is sufficient to verify that any solution
$(w^\eps,\rho)\in\overline{Z}_R\times[0,1]$ to the fixed-point equation
$w^\eps = \rho F(w^\eps)$ satisfies $(w^\eps,\rho)\not\in \pa Z_R\times[0,1]$
for sufficiently large values of $R>0$. Let $(w^\eps,\rho)$ be a fixed point.
The case $\rho=0$ being clear, we assume that $\rho\neq 0$. Then $w_i^\eps$
solves
\begin{equation}\label{3.fp}
  \eps\bigg(-\sum_{\sigma\in\E_K}\tau_\sigma\mathrm{D}_{K,\sigma} w_i^\eps
	+ \m(K)w_{i,K}^\eps\bigg) 
	= -\rho\bigg(\frac{\m(K)}{\Delta t}(u_{i,K}^\eps - u_{i,K}^{k-1})
	+ \sum_{\sigma\in\E_K}\mathcal{F}_{i,K,\sigma}^\eps\bigg)
\end{equation}
for $i=1,\ldots,n$ and $K\in\T$, where $u_{i,K}^\eps=\exp(w_{i,K}^\eps)$ and
$\mathcal{F}_{i,K,\sigma}^\eps$ is defined as in \eqref{2.flux} with
$u_{i,K}^k$ replaced by $u_{i,K}^\eps$. 
The following inequality is the key argument.

\begin{lemma}[Discrete  \SShannon entropy inequality]\label{lem.dei}
Let $w^\eps$ be a solution to \eqref{3.fp} and $u_i^\eps:=\exp(w_i^\eps)$. Then
\begin{align}\label{3.eiB}
	\begin{aligned}
  \rho H_\Shn(u^\eps)&+ \eps\Delta t\sum_{i=1}^n\|w_i^\eps\|_{1,2,\T}^2
	+ \rho\Delta t\sum_{i,j=1}^n\sum_{\sigma\in\E_{\rm int}}\tau_\sigma b_{ij}
	\mathrm{D}_{K,\sigma}u^\eps_i\mathrm{D}_{K,\sigma}u_j^\eps \\
	&{}+ 4\rho\eta^\alpha\Delta t\sum_{i=1}^n|(u_i^\eps)^{1/2}|_{1,2,\T}^2
	\le \rho H_\Shn(u^{k-1}).
\end{aligned}
\end{align}
\end{lemma}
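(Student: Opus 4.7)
The natural test function is $\Delta t\, w_{i,K}^\eps=\Delta t\,\log u_{i,K}^\eps$ (well-defined since $u_i^\eps=\exp(w_i^\eps)>0$). The plan is to multiply the fixed-point equation \eqref{3.fp} by $\Delta t\, w_{i,K}^\eps$, sum over $i=1,\dots,n$ and $K\in\T$, and identify each resulting contribution with one of the four terms in \eqref{3.eiB}.

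For the $\eps$-part of the left-hand side of \eqref{3.fp}, the computation is the one already performed in \eqref{3.aux}: discrete integration by parts turns the pair of terms into $\eps\Delta t\sum_i\|w_i^\eps\|_{1,2,\T}^2$, which is the second term of \eqref{3.eiB}. On the right-hand side, the time-difference contribution is
$$
  \rho\sum_{i,K}\m(K)(u_{i,K}^\eps-u_{i,K}^{k-1})\log u_{i,K}^\eps,
$$
and the convexity of $h_\Shn$ (whose derivative in $u_i$ is $\log u_i$) gives the pointwise inequality $(u_{i,K}^\eps-u_{i,K}^{k-1})\log u_{i,K}^\eps\ge h_\Shn(u_K^\eps)-h_\Shn(u_K^{k-1})$, so summing over $K$ produces $\rho(H_\Shn(u^\eps)-H_\Shn(u^{k-1}))$, which yields both the first term of \eqref{3.eiB} and its right-hand side.

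The remaining flux contribution $\rho\Delta t\sum_i\sum_K\sum_{\sigma\in\E_K}\mathcal{F}_{i,K,\sigma}^\eps\log u_{i,K}^\eps$ is precisely the expression appearing on the left of \eqref{2.remup} in Remark~\ref{rem.upwind}. Since $u^\eps>0$, that remark applies verbatim: the discrete integration by parts \eqref{2.ibp} together with the upwind gradient-flow inequality \eqref{2.gf} produces the Rao-type quadratic form
$$
  \rho\Delta t\sum_{i,j=1}^n\sum_{\sigma\in\E_{\rm int}}\tau_\sigma b_{ij}\,\mathrm{D}_{K,\sigma}u_i^\eps\,\mathrm{D}_{K,\sigma}u_j^\eps,
$$
matching the third term in \eqref{3.eiB}. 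The artificial diffusion piece contributes, after discrete IBP, $\rho\eta^\alpha\Delta t\sum_i\sum_{\sigma\in\E_{\rm int}}\tau_\sigma\,\mathrm{D}_{K,\sigma}u_i^\eps\,\mathrm{D}_{K,\sigma}\log u_i^\eps$; one uses the elementary inequality $(a-b)(\log a-\log b)\ge 4(\sqrt a-\sqrt b)^2$ for $a,b>0$ to replace each summand by $4\tau_\sigma(\mathrm{D}_{K,\sigma}(u_i^\eps)^{1/2})^2$, giving the fourth term of \eqref{3.eiB}.

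Collecting the four identifications, all terms have the correct sign and \eqref{3.eiB} follows. The main technical point is the careful bookkeeping in the flux term: one must verify that the hypothesis $u_{i,K}^\eps>0$ underlying Remark~\ref{rem.upwind} holds (which is automatic from $u^\eps=\exp w^\eps$) and then combine three elementary convexity/gradient-flow inequalities (convexity of $h_\Shn$, the upwind inequality \eqref{2.gf}, and the $\sqrt{\cdot}$-$\log$ inequality) in the right order; no additional estimate is needed beyond those already recorded in the paper.
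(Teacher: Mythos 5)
Your proposal is correct and follows essentially the same route as the paper: multiply \eqref{3.fp} by $\Delta t\,w_{i,K}^\eps$, sum, integrate by parts discretely, and then bound the time-difference term by convexity of $h_\Shn$, the flux term by the upwind inequality \eqref{2.gf}, and the artificial-diffusion term by $(a-b)(\log a-\log b)\ge 4(\sqrt a-\sqrt b)^2$. The only difference is presentational (you move terms to the left rather than bounding $I_1,I_2,I_3$ from above on the right), which is immaterial.
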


\begin{proof}
We multiply \eqref{3.fp} by $\Delta t w_{i,K}^\eps$, sum over $i=1,\ldots,n$
and $K\in\T$, and use discrete integration by parts (cf.~\eqref{2.ibp}). 
Then
$\eps\Delta t\sum_{i=1}^n\|w_i^\eps\|_{1,2,\T}^2 = I_1+I_2+I_3$, where 
\begin{align*}
	& I_1 = -\rho\sum_{i=1}^n\sum_{K\in\T}\m(K)
	(u_{i,K}^\eps-u_{i,K}^{k-1})w_{i,K}^\eps, \\
	& I_2 = -\rho\Delta t\sum_{i=1}^n\sum_{\sigma\in\E_{\rm int}}\tau_\sigma
	u_{i,\sigma}^\eps\mathrm{D}_{K,\sigma} p_i(u^\eps)
	\mathrm{D}_{K,\sigma}w_{i,K}^\eps, \\
	& I_3 = -\rho\eta^\alpha \Delta t\sum_{i=1}^n\sum_{\sigma\in\E_{\rm int}}\tau_\sigma
	\mathrm{D}_{K,\sigma}u_i^\eps \mathrm{D}_{K,\sigma}w_{i,K}^\eps.
\end{align*}
The definition
$u_{i,K}^\eps=\exp(w_{i,K}^\eps)$ and the convexity of the \SShannon entropy imply that
$$
  I_1 = -\rho\sum_{i=1}^n\sum_{K\in\T}\m(K) (u_{i,K}^\eps-u_{i,K}^{k-1})
	\log u_{i,K}^\eps	\le -\rho\big(H_\Shn(u^\eps)-H_\Shn(u^{k-1})\big).
$$
For $I_2$, we rely on inequality \eqref{2.gf}:
\begin{align*}
  I_2 &= -\rho\Delta t\sum_{i=1}^n\sum_{\sigma=K|L\in\E_{\rm int}} 
	\tau_\sigma u_{i,\sigma}^\eps(p_i(u_L^\eps)-p_i(u_K^\eps))(\log u_{i,L}^\eps-\log u_{i,K}^\eps) \\
	&\le -\rho\Delta t\sum_{i,j=1}^n\sum_{\sigma=K|L\in\E_{\rm int}} 
	\tau_\sigma  b_{ij}(u_{j,L}^\eps-u_{j,K}^\eps)(u_{i,L}^\eps-u_{i,K}^\eps) \\
	&= -\rho\Delta t\sum_{i,j=1}^n\sum_{\sigma=K|L\in\E_{\rm int}} 
	\tau_\sigma  b_{ij}\mathrm{D}_{K,\sigma}u_i^\eps\mathrm{D}_{K,\sigma}u_j^\eps.
\end{align*}
Finally, using the elementary inequality 
$(a-b)(\log a-\log b)\ge 4(\sqrt{a}-\sqrt{b})^2$,
\begin{align*}
  I_3 &= -\rho\eta^\alpha\Delta t\sum_{i=1}^n
  \sum_{\sigma=K|L\in\E_{\rm int}}\tau_\sigma 
  (u_{i,L}^\eps-u_{i,K}^\eps)(\log u_{i,L}^\eps-\log u_{i,K}^\eps) \\
  &\le -4\rho\eta^\alpha\Delta t\sum_{i=1}^n
  \sum_{\sigma=K|L\in\E_{\rm int}}\tau_\sigma 
  \big((u_{i,L}^\eps)^{1/2}-(u_{i,K}^\eps)^{1/2}\big)^2
  = -4\rho\eta^\alpha\Delta t\sum_{i=1}^n |(u_i^\eps)^{1/2}|_{1,2,\T}^2.
\end{align*}
Combining these estimates finishes the proof of Lemma~\ref{lem.dei}.
\end{proof}

We now complete the topological degree argument. Lemma \ref{lem.dei} implies that
$$
  \eps\Delta t\sum_{i=1}^n\|w_i^\eps\|_{1,2,\T}^2
	\le \rho H_\Shn(u^{k-1}) \le H_\Shn(u^{k-1}).
$$
With the choice $R:=(\eps\Delta t)^{-1/2}H_\Shn(u^{k-1})^{1/2}+1$ we find that
$w^\eps\not\in\pa Z_R$ and $\operatorname{deg}(I-F,Z_R,0)=1$. We conclude that $F$ possesses a fixed point.

\subsection{Limit \texorpdfstring{$\eps\to 0$}{}} 

By Lemma \ref{lem.dei}, there exists $C>0$,
independent of $\eps$, such that
$$
  C\sum_{i=1}^n\sum_{K\in\T}\m(K)(u_{i,K}^\eps-1) \le H_\Shn(u^\eps) \le H_\Shn(u^{k-1}).
$$
This gives a uniform discrete $L^1(\Omega)$ bound for $u_{i}^\eps$. 
There exists a subsequence
(not relabeled) such that $u_{i,K}^\eps\to u_{i,K}$ as $\eps\to 0$ for all
$i=1,\ldots,n$ and $K\in\T$. Moreover, the discrete $H^1(\Omega)$ bound for
$\sqrt{\eps}w_i^\eps$ implies that $\eps w_{i,K}^\eps\to 0$ for $i=1,\ldots,n$
and $K\in\T$. Then the limit $\eps\to 0$ in \eqref{3.fp} yields the existence
of a solution $u^k:=(u_{i,K})_{i=1,\ldots,n,\,K\in\T}$ to \eqref{2.fvm}. 
Observing that
\begin{align*}
  \sum_{{i,j=1}}^n\sum_{\sigma\in\E}\tau_\sigma b_{ij}
	\mathrm{D}_{K,\sigma}u_i^\eps\mathrm{D}_{K,\sigma}u_j^\eps
	&= \sum_{\sigma\in\E}\tau_\sigma(\mathrm{D}_{K,\sigma}u^\eps)^T
	\BB(\mathrm{D}_{K,\sigma}u^\eps) \\
	&\ge \sum_{\sigma\in\E}\tau_\sigma|\BB^{1/2}\mathrm{D}_{K,\sigma}u^\eps|^2
	= |\BB^{1/2}u^\eps|_{1,2,\T}^2,
\end{align*}
the same limit in the regularized entropy inequality \eqref{3.eiB} 
directly leads to the discrete entropy inequality \eqref{2.eiB}.

\subsection{Discrete Rao entropy inequality}

To verify \eqref{2.eiR}, we multiply \eqref{2.fvm} by $\Delta t  p_i(u^k_K)$,
sum over $i=1,\ldots,n$ and $K\in\T$, and use discrete integration by parts:
\begin{align*}
  \sum_{i=1}^n&\sum_{K\in\T}\m(K)(u_{i,K}^k-u_{i,K}^{k-1})p_i(u^k_K) \\
	&= \Delta t\sum_{i=1}^n\sum_{K\in\T}\sum_{\sigma\in\E_K}
	\tau_\sigma \big(u_{i,\sigma}^k\mathrm{D}_{K,\sigma}p_i(u^k)
	+ \eta^\alpha\mathrm{D}_{K,\sigma}u_i^k\big)p_i(u^k_K) \\
	&= -\Delta t\sum_{i=1}^n\sum_{\sigma\in\E_{\rm int}}\tau_\sigma  u_{i,\sigma}^k
	(\mathrm{D}_\sigma p_i(u^k))^2
	- \eta^\alpha\Delta t\sum_{i,j=1}^n\sum_{\sigma\in\E_{\rm int}}\tau_\sigma b_{ij}
	\mathrm{D}_{K,\sigma}u_i^k\mathrm{D}_{K,\sigma}u_j^k \\
	&= -\Delta t\sum_{i=1}^n\sum_{\sigma\in\E_{\rm int}}\tau_\sigma  u_{i,\sigma}^k
	(\mathrm{D}_\sigma p_i(u^k))^2
	- \eta^\alpha\Delta t\sum_{i=1}^n|(\BB^{1/2}u^k)_i|_{1,2,\T}^2.
\end{align*}
By the definition of $p_i(u^k)$ and the symmetry and positive semidefiniteness of
$\BB$, the left-hand side becomes
\begin{align*}
  \sum_{i=1}^n&\sum_{K\in\T}\m(K)(u_{i,K}^k-u_{i,K}^{k-1})p_i(u^k_K) 
	= \sum_{i,j=1}^n\sum_{K\in\T}\m(K) b_{ij}(u_{i,K}^k-u_{i,K}^{k-1})u_{j,K}^k	\\
	&= \frac12\sum_{i,j=1}^n\sum_{K\in\T}\m(K) b_{ij} 
    \big(u_{i,K}^ku_{j,K}^k - u_{i,K}^{k-1}u_{j,K}^{k-1}
	+ (u_{i,K}^k-u_{i,K}^{k-1})(u_{j,K}^k-u_{j,K}^{k-1})\big) \\
	&\ge H_R(u^k)-H_R(u^{k-1}).
\end{align*}
We infer the monotonicity of $k\mapsto H_R(u^k)$. 
After summation over $k=1,\ldots,j$ and a renaming of the indices $k$ and $j$, this shows~\eqref{2.eiR} and thus completes the proof of Theorem~\ref{thm.ex}.

\subsection{Positivity}\label{sec.pos}

Thanks to the artificial diffusion, the discrete solution $u_{i,K}^k$ is positive for $i=1,\ldots,n$ and $K\in\T$. Indeed, let $i\in\{1,\ldots,n\}$ be fixed and assume that there exists $K\in\T$ such that $u_{i,K}^k=0$. We infer from $I_3$ in Section \ref{sec.fp} that
$$
  \eta^\alpha(u_{i,L}^\eps-u_{i,K}^\eps)(\log u_{i,L}^\eps-\log u_{i,K}^\eps) \le C(\Delta t,u^0),
$$
where $L\in K$ is a neighboring cell of $K$.
If $u_{i,L}^k>0$, the limit $\eps\to 0$ in the previous estimate leads to a contradiction since $\log u_{i,K}^\eps$ diverges. Therefore, $u_{i,L}^k=0$. Let $L'\in\T$ be a neighboring cell of $L$. Arguing in a similar way as before, it follows that $u_{i,L'}^k=0$. Repeating this argument for all cells in $\T$, we find that $u_{i,K}^k=0$ for all $K\in\T$. This implies that $\sum_{K\in\T}\m(K)u_{i,K}^k=0$ and, by mass conservation, $\sum_{K\in\T}\m(K)u_{i,K}^0=0$, which contradicts the positivity of the $L^1(\Omega)$ norm of $u^0$ in Hypothesis (H1).

\section{Convergence}\label{sec.conv}

In this section, we prove Theorem~\ref{thm.conv}, that is, we show the asserted convergence of the numerical scheme.
Uniform estimates are derived from the entropy inequalities 
\eqref{2.eiB} and~\eqref{2.eiR}. Lemma~\ref{lem.APL} in the appendix
shows that $|\widehat{u}^k|\le\lambda^{-1}|\BB^{1/2}u^k|$, where we 
recall that $\widehat{u}^k=P_{L^\perp}u^k$. Thus, we obtain a uniform estimate
for $\widehat{u}^k$ in the seminorm $|\cdot|_{1,2,\T}$. 
Moreover, since $b_{ii}>0$ and $b_{ij}\ge0$ for all $i,j$ (cf.~Hypothesis (H3)),
estimate \eqref{2.eiR} provides a uniform bound for $u^k$ in the discrete
$L^2(\Omega)$ norm. Hence, there exists a constant $C>0$ 
which is independent of $\eta=\max\{\Delta x,\Delta t\}$ such that
\begin{align}
    \sum_{k=1}^{N_T}\Delta t\big(\|\widehat{u}^k\|_{1,2,\T}^2
	+ \|\BB^{1/2}u^k\|_{1,2,\T}^2\big)
    + \eta^\alpha\sum_{k=1}^{N_T}\Delta t
    |(u^k)^{1/2}|_{1,2,\T}^2 &\le C, \label{4.shannon} \\
	\max_{k=1,\ldots,N_T}\|u^k\|_{0,2,\T}
    + \sum_{j=1}^k\Delta t\sum_{i=1}^n\sum_{\sigma\in\E_{\rm int}}
	\tau_\sigma u_{i,\sigma}^j|\mathrm{D}_{\sigma}(\BB u^j)_i|^2 
    &\le C. \label{4.rao}
\end{align}

\subsection{Compactness properties}

We first prove a full gradient bound with a negative power of $\eta$ on the right-hand side.

\begin{lemma}\label{lem.inv}
There exists $C=C(\zeta)>0$ independent of $\eta$ such that
$$
  \sum_{k=1}^{N}\Delta t|u_i^k|_{1,4/3,\T}^2
	\le C\eta^{-\alpha},\quad 
	 \sum_{k=1}^{N}\Delta t|u_i^k|_{1,1,\T}^2
	\le C\eta^{-\alpha}.
$$
\end{lemma}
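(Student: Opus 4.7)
The strategy is to exploit the artificial-diffusion bound from the Shannon entropy inequality, namely
$\eta^\alpha\sum_{k}\Delta t\,|(u_i^k)^{1/2}|_{1,2,\T}^2\le C$ coming from \eqref{4.shannon}, together with the $L^\infty(0,T;L^2(\Omega))$ bound on $u^k$ provided by the Rao inequality \eqref{4.rao}. The link between the gradient of $u_i^k$ and that of its square root is the discrete chain-rule identity
\[
  \mathrm{D}_{K,\sigma}u_i^k = \bigl((u_{i,K}^k)^{1/2}+(u_{i,L}^k)^{1/2}\bigr)\mathrm{D}_{K,\sigma}(u_i^k)^{1/2},\quad \sigma=K|L,
\]
valid thanks to positivity ($u_{i,K}^k>0$ for all $K\in\T$, see Section~\ref{sec.pos}).

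For the $L^2(0,T;W^{1,4/3})$ estimate, I would raise this identity to the power $4/3$, weight each edge by $\mm(\sigma)\dist_\sigma^{-1/3}$ and apply H\"older's inequality with exponents $(3/2,3)$: writing $a_\sigma^{3/2}=\mm(\sigma)\dist_\sigma^{-1}|\mathrm{D}_{K,\sigma}(u_i^k)^{1/2}|^2$ and $b_\sigma^{3}=\mm(\sigma)\dist_\sigma\big((u_{i,K}^k)^{1/2}+(u_{i,L}^k)^{1/2}\big)^4$, one gets
\[
  |u_i^k|_{1,4/3,\T}^{4/3}\le \bigl(|(u_i^k)^{1/2}|_{1,2,\T}^2\bigr)^{2/3}\Bigl(\sum_{\sigma\in\E}\mm(\sigma)\dist_\sigma\big((u_{i,K}^k)^{1/2}+(u_{i,L}^k)^{1/2}\big)^4\Bigr)^{1/3}.
\]
Using $(\sqrt a+\sqrt b)^4\le 8(a^2+b^2)$ together with the geometric bound \eqref{2.sum} and the mesh regularity \eqref{2.regul} (which give $\sum_{\sigma\in\E_K}\mm(\sigma)\dist_\sigma\le (d/\zeta)\m(K)$), the last factor is dominated by a constant multiple of $\|u^k\|_{0,2,\T}^2$, which is bounded uniformly by \eqref{4.rao}. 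Raising to the power $3/2$ and summing in time then yields
\[
  \sum_{k=1}^{N}\Delta t\,|u_i^k|_{1,4/3,\T}^2\le C\sum_{k=1}^{N}\Delta t\,|(u_i^k)^{1/2}|_{1,2,\T}^2\le C\eta^{-\alpha},
\]
thanks to \eqref{4.shannon}.

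The $L^2(0,T;W^{1,1})$ bound is obtained analogously but with a direct Cauchy--Schwarz splitting
\[
  |u_i^k|_{1,1,\T}\le \Bigl(\sum_{\sigma\in\E}\mm(\sigma)\dist_\sigma^{-1}|\mathrm{D}_{K,\sigma}(u_i^k)^{1/2}|^2\Bigr)^{1/2}\Bigl(\sum_{\sigma\in\E}\mm(\sigma)\dist_\sigma\bigl((u_{i,K}^k)^{1/2}+(u_{i,L}^k)^{1/2}\bigr)^2\Bigr)^{1/2},
\]
so that the right-hand side factors as $|(u_i^k)^{1/2}|_{1,2,\T}$ times a quantity controlled by $\|u^k\|_{0,1,\T}^{1/2}\le C$ (by \eqref{4.rao} and the boundedness of $\Omega$). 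Squaring and summing in time again gives the desired $C\eta^{-\alpha}$.

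The only slightly delicate point is the geometric reduction of sums over edges $\sum_{\sigma=K|L}\mm(\sigma)\dist_\sigma\,\phi(u_K,u_L)$ to cell sums $\sum_K\m(K)\,\psi(u_K)$: this is where the mesh-regularity condition \eqref{2.regul} and the geometric identity \eqref{2.sum} are used in tandem, each cell being charged at most twice. I do not anticipate a genuine obstacle; the argument is essentially a discrete analogue of the chain-rule identity $\nabla u=2\sqrt u\,\nabla\sqrt u$ combined with the interpolation inequalities $\|\nabla u\|_{L^1}\le 2\|\sqrt u\|_{L^2}\|\nabla\sqrt u\|_{L^2}$ and $\|\nabla u\|_{L^{4/3}}^{4/3}\le C\|\sqrt u\|_{L^4}^{4/3}\|\nabla\sqrt u\|_{L^2}^{4/3}$ at the continuous level.
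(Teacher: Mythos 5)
Your proposal is correct and follows essentially the same route as the paper: the factorization $\mathrm{D}_{K,\sigma}u_i^k=((u_{i,K}^k)^{1/2}+(u_{i,L}^k)^{1/2})\mathrm{D}_{K,\sigma}(u_i^k)^{1/2}$, the H\"older split with exponents $(3/2,3)$, the reduction of the edge sum to a cell sum via \eqref{2.regul} and \eqref{2.sum}, and the final appeal to the artificial-diffusion term in \eqref{4.shannon}. Your Cauchy--Schwarz argument for the $|\cdot|_{1,1,\T}$ bound is exactly the ``same lines'' variant the paper alludes to.
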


\begin{proof}
By the mesh regularity \eqref{2.regul} and property
\eqref{2.sum},
\begin{equation}\label{4.md}
	\sum_{\sigma\in\E_K}\frac{\mm(\sigma)\dist_\sigma}{\m(K)}
	\le \sum_{\sigma\in\E_K}\frac{\mm(\sigma)\dist(x_K,\sigma)}{\zeta\m(K)}
		\le \frac{d}{\zeta}.
\end{equation}
This yields, using H\"older's inequality and the 
$L^2(\Omega)$ bound \eqref{4.rao} for $u_i^k$,
\begin{align*}
	|u_i^k|_{1,4/3,\T}^{4/3} 
	&= \sum_{\sigma\in\E_{\rm int}}\mm(\sigma)\dist_\sigma
	\bigg|\frac{u_{i,L}^k-u_{i,K}^k}{\dist_\sigma}\bigg|^{4/3}\\
	&= \sum_{\sigma\in\E_{\rm int}}
	\mm(\sigma)\dist_\sigma^{-1/3}\big|(u_{i,L}^k)^{1/2}-(u_{i,K}^k)^{1/2}
	\big|^{4/3}\big|(u_{i,L}^k)^{1/2}+(u_{i,K}^k)^{1/2}\big|^{4/3} \\
	&\le \bigg(\sum_{\sigma\in\E_{\rm int}}\mm(\sigma)\dist_\sigma^{-1}
	\big((u_{i,L}^k)^{1/2}-(u_{i,K}^k)^{1/2}\big)^2\bigg)^{2/3} \\
	&\phantom{xx}{}\times
	\bigg(\sum_{\sigma\in\E_{\rm int}}\mm(\sigma)\dist_\sigma
	\big((u_{i,L}^k)^{1/2}+(u_{i,K}^k)^{1/2}\big)^4\bigg)^{1/3} \\
	&\le C|(u_i^k)^{1/2}|_{1,2,\T}^{4/3}\bigg(\sum_{K\in\T}\m(K)
	(u_{i,K}^k)^2\sum_{\sigma\in\E_K}\frac{\mm(\sigma)\dist_\sigma}{\m(K)}
	\bigg)^{1/3} \\
	&\le C(\zeta)|(u_i^k)^{1/2}|_{1,2,\T}^{4/3}\|u_i^k\|_{0,2,\T}^{2/3}.
\end{align*}
Taking the exponent $3/2$, multiplying by $\Delta t$, and summing 
over $k=1,\ldots,N$ proves the first inequality.
The second inequality follows along the same lines (or by H\"older's inequality).
\end{proof}

\begin{lemma}\label{lem.uB}
There exists $C=C(\zeta)>0$ independent of $\eta$ such that
$$
  \sum_{k=1}^{N}\Delta t\|u_{i,\sigma}^k(B\na^{\mathcal{D}}
  \widehat{u}^k)_i\|_{0,4/3,\T}^2 \le C.
$$
\end{lemma}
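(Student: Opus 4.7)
The plan is to bound the left-hand side by the Rao dissipation term appearing in~\eqref{4.rao} via a Hölder interpolation on the diamond mesh, in a manner completely analogous to the argument used in Lemma~\ref{lem.inv}. Writing out the discrete norm on the dual mesh, using $\m(T_{K,\sigma})=\mm(\sigma)\dist_\sigma/d$ and the definition of $\na^{\mathcal{D}}$, one has, up to a dimensional constant,
\begin{align*}
  \|u_{i,\sigma}^k(B\na^{\mathcal{D}}\widehat{u}^k)_i\|_{0,4/3,\T}^{4/3}
  \;\lesssim\;
  \sum_{\sigma\in\E_{\rm int}}\mm(\sigma)\,\dist_\sigma^{-1/3}\,
  (u_{i,\sigma}^k)^{4/3}\,|\mathrm{D}_\sigma(\BB u^k)_i|^{4/3},
\end{align*}
where we used $(B\na^{\mathcal{D}}\widehat{u}^k)_i=\na^{\mathcal{D}}(B\widehat{u}^k)_i=\na^{\mathcal{D}}(\BB u^k)_i$ since $B$ vanishes on $L=\ker B$.

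Next I would apply Hölder's inequality with exponents $(3/2,3)$, splitting each summand as
\[
  \bigl(\tau_\sigma\,u_{i,\sigma}^k\,|\mathrm{D}_\sigma(\BB u^k)_i|^2\bigr)^{2/3}
  \cdot
  \bigl(\mm(\sigma)\dist_\sigma\,(u_{i,\sigma}^k)^2\bigr)^{1/3},
\]
the identity $\tau_\sigma=\mm(\sigma)/\dist_\sigma$ being exactly what makes the exponents line up. This yields
\begin{align*}
  \|u_{i,\sigma}^k(B\na^{\mathcal{D}}\widehat{u}^k)_i\|_{0,4/3,\T}^{4/3}
  \;\lesssim\;
  \bigg(\sum_{\sigma}\tau_\sigma u_{i,\sigma}^k|\mathrm{D}_\sigma(\BB u^k)_i|^2\bigg)^{2/3}
  \bigg(\sum_{\sigma}\mm(\sigma)\dist_\sigma\,(u_{i,\sigma}^k)^2\bigg)^{1/3}.
\end{align*}

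For the second factor I would use $u_{i,\sigma}^k\le\max\{u_{i,K}^k,u_{i,L}^k\}$ from~\eqref{2.meanprop} and the geometric inequality~\eqref{4.md} (which combines~\eqref{2.regul} and~\eqref{2.sum}), exactly as in the proof of Lemma~\ref{lem.inv}, to obtain
\[
  \sum_{\sigma}\mm(\sigma)\dist_\sigma\,(u_{i,\sigma}^k)^2
  \;\le\; C(\zeta)\|u_i^k\|_{0,2,\T}^2,
\]
which is bounded uniformly in $k$ and $\eta$ by the Rao-entropy estimate~\eqref{4.rao}. Raising the resulting bound to the $3/2$ power gives
\[
  \|u_{i,\sigma}^k(B\na^{\mathcal{D}}\widehat{u}^k)_i\|_{0,4/3,\T}^2
  \;\le\; C(\zeta)\sum_{\sigma\in\E_{\rm int}}\tau_\sigma u_{i,\sigma}^k|\mathrm{D}_\sigma(\BB u^k)_i|^2,
\]
after which multiplying by $\Delta t$ and summing over $k=1,\dots,N$ and using~\eqref{4.rao} a second time yields the claim.

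I expect no major obstacle: the only subtlety is choosing the Hölder exponents so that the Rao dissipation term $\tau_\sigma u_{i,\sigma}^k|\mathrm{D}_\sigma(\BB u^k)_i|^2$ appears cleanly to the power $2/3$ and the leftover factor reduces to $\mm(\sigma)\dist_\sigma(u_{i,\sigma}^k)^2$, which then collapses to $\|u_i^k\|_{0,2,\T}^2$ via the same mesh-regularity trick already employed in Lemma~\ref{lem.inv}. This mirrors the ``$L^2$-in-time of $L^{4/3}$-in-space'' control of $u_{m,i}\na(Bu_m)_i$ announced in the introduction after~\eqref{1.eiR}.
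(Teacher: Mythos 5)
Your proposal is correct and follows essentially the same route as the paper's proof: the same Hölder splitting of the dual-mesh $L^{4/3}$ norm into the Rao dissipation term to the power $2/3$ and the weighted $(u_{i,\sigma}^k)^2$ sum to the power $1/3$, the same reduction of the latter to $\|u_i^k\|_{0,2,\T}^2$ via \eqref{2.meanprop} and the mesh regularity, and the same final summation using \eqref{4.rao}. The only cosmetic difference is that you write $\mm(\sigma)\dist_\sigma$ where the paper uses $\m(T_{K,\sigma})=\mm(\sigma)\dist_\sigma/d$, which only changes dimensional constants.
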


\begin{proof}
We infer from the definition of the discrete gradient 
and H\"older's inequality that
\begin{align}\label{4.sigma}
  \|&u_{i,\sigma}^k(B\na^{\mathcal{D}}\widehat{u}^k)_i\|_{0,4/3,\T}^{4/3}
  = \sum_{K\in\T}\sum_{\sigma\in\E_{{\rm int},K}}
  \m(T_{K,\sigma})(u_{i,\sigma}^k)^{4/3}
  \bigg|\frac{\mm(\sigma)}{\m(T_{K,\sigma})}\mathrm{D}_{K,\sigma}
  (B\widehat{u}^k)_i\bigg|^{4/3} \\
  &= \sum_{K\in\T}\sum_{\sigma\in\E_{{\rm int},K}}
  \m(T_{K,\sigma})^{1/3}(u_{i,\sigma}^k)^{2/3}
  \frac{\mm(\sigma)^{4/3}}{\m(T_{K,\sigma})^{2/3}}
  \big|(u_{i,\sigma}^k)^{1/2}\mathrm{D}_{K,\sigma}
  (B\widehat{u}^k)_i\big|^{4/3} \nonumber \\
  &\le \bigg(\sum_{K\in\T}\sum_{\sigma\in\E_{{\rm int},K}}
  \m(T_{K,\sigma})(u_{i,\sigma}^k)^2\bigg)^{1/3}
  \bigg(\sum_{K\in\T}\sum_{\sigma\in\E_{{\rm int},K}}
  \frac{\mm(\sigma)^2}{\m(T_{K,\sigma})}u_{i,\sigma}^k
  \big|\mathrm{D}_{K,\sigma}(B\widehat{u}^k)_i\big|^2\bigg)^{2/3}.
  \nonumber 
\end{align}
Because of $\m(T_{K,\sigma})=\mm(\sigma)\dist_\sigma/d$ for $\sigma\in\E_{{\rm int},K}$, mesh regularity \eqref{2.regul}, 
and property \eqref{2.sum}, we find for the first factor that
\begin{align}\label{4.mdu}
  \sum_{K\in\T}\sum_{\sigma\in\E_{{\rm int},K}}
  \m(T_{K,\sigma})(u_{i,\sigma}^k)^2
  &\le C(\zeta)\sum_{K\in\T}
  \bigg(\sum_{\sigma\in\E_{{\rm int},K}}
  \mm(\sigma)\dist(x_K,\sigma)\bigg)(u_{i,K}^k)^2 \\
  &\le C(\zeta)\sum_{K\in\T}\m(K)(u_{i,K}^k)^2
  = C(\zeta)\|u_{i}\|_{0,2,\T}^2, \nonumber
\end{align}
where we also used \eqref{2.meanprop}.
The second factor on the right-hand side of \eqref{4.sigma} becomes
\begin{align*}
  \sum_{K\in\T}\sum_{\sigma\in\E_{{\rm int},K}}
  \frac{\mm(\sigma)^2}{\m(T_{K,\sigma})}u_{i,\sigma}^k
  \big|\mathrm{D}_{K,\sigma}(B\widehat{u}^k)_i\big|^2
  = d\sum_{K\in\T}\sum_{\sigma\in\E_{{\rm int},K}}\tau_\sigma
  u_{i,\sigma}^k\big|\mathrm{D}_{K,\sigma}(B\widehat{u}^k)_i\big|^2.
\end{align*}
We take \eqref{4.sigma} to the power $3/2$, multiply by $\Delta t$, and sum over $k=1,\ldots,N$:
\begin{align*}
  \sum_{k=1}^N\Delta t\|u_{i,\sigma}^k(B\na^{\mathcal{D}}
  \widehat{u}^k)_i\|_{0,4/3,\T}^2
  \le C\max_{k=1,\ldots,N}\|u_i^k\|^2_{0,2,\T}
  \sum_{k=1}^N\Delta t\sum_{\sigma\in\E_{\rm int}}\tau_\sigma  u_{i,\sigma}^k|\mathrm{D}_\sigma(B\widehat{u}^k)_i|^2  \le C,
\end{align*}
where the uniform bound follows from \eqref{4.rao}.
\end{proof}

For the compactness argument, we need an estimate for the discrete time derivative, which is defined by
$$
  \pa_t^{\Delta t} v^k = \frac{v^k-v^{k-1}}{\Delta t} \quad\mbox{for } 
	v\in V_{\T,\Delta t},\ k=1,\ldots,N.
$$

\begin{lemma}[Discrete time derivative]\label{lem.time}
There exists a constant $C=C(\zeta)>0$ independent
of $\eta$ such that 
$$
  \sum_{k=1}^{N}\Delta t\|\pa_t^{\Delta t}u^k\|_{-1,4,\T}^2\le C.
$$
\end{lemma}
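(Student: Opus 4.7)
The approach is to test the scheme against an arbitrary $w\in V_\T^n$ with $\|w\|_{1,4,\T}=1$ and to bound the resulting boundary sum in terms of the two quantities already controlled, namely the weighted flux norm of Lemma~\ref{lem.uB} and the $W^{1,4/3}$-seminorm of Lemma~\ref{lem.inv}.

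First I would fix $i\in\{1,\dots,n\}$ and a test function $w\in V_\T$, multiply the scheme~\eqref{2.fvm} by $w_K$ and sum over $K\in\T$. Using the discrete integration-by-parts formula~\eqref{2.ibp} together with the splitting~\eqref{2.flux} of the flux, this yields
\begin{align*}
\int_\Omega \partial_t^{\Delta t}u_i^k\, w\,\dd x
&= -\sum_{\sigma\in\E_{\rm int}} \tau_\sigma u_{i,\sigma}^k\,\mathrm{D}_{K,\sigma}(B\widehat{u}^k)_i\,\mathrm{D}_{K,\sigma}w \\
&\quad -\eta^\alpha\sum_{\sigma\in\E_{\rm int}}\tau_\sigma\,\mathrm{D}_{K,\sigma}u_i^k\,\mathrm{D}_{K,\sigma}w,
\end{align*}
where I have used that $p_i(u^k)=(Bu^k)_i=(B\widehat{u}^k)_i$ since $\ker B\perp\widehat{u}^k$.

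Next I would convert each edge sum into an integral on the dual mesh via the identity $\mm(\sigma)\dist_\sigma=d\,\m(T_{K,\sigma})$ and the definition of the discrete gradient $\nabla^{\mathcal D}$. Applying H\"older's inequality with exponents $4/3$ and $4$ on the dual-mesh cells, the first (convective) sum is bounded by
\[
C\,\|u_{i,\sigma}^k(B\nabla^{\mathcal D}\widehat{u}^k)_i\|_{0,4/3,\T}\,|w|_{1,4,\T},
\]
and the second (diffusive) sum by
\[
C\,\eta^\alpha\,|u_i^k|_{1,4/3,\T}\,|w|_{1,4,\T}.
\]
Taking the supremum over $w$ with $\|w\|_{1,4,\T}=1$, squaring, multiplying by $\Delta t$, and summing over $k=1,\ldots,N$ then gives
\[
\sum_{k=1}^N\Delta t\,\|\partial_t^{\Delta t}u_i^k\|_{-1,4,\T}^2
\le C\sum_{k=1}^N\Delta t\,\|u_{i,\sigma}^k(B\nabla^{\mathcal D}\widehat{u}^k)_i\|_{0,4/3,\T}^2
+ C\eta^{2\alpha}\sum_{k=1}^N\Delta t\,|u_i^k|_{1,4/3,\T}^2.
\]

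To conclude, I would invoke Lemma~\ref{lem.uB} to bound the first sum by a constant independent of $\eta$. For the second sum, Lemma~\ref{lem.inv} yields $\sum_k\Delta t\,|u_i^k|_{1,4/3,\T}^2\le C\eta^{-\alpha}$, so the contribution is controlled by $C\eta^{\alpha}$, which is uniformly bounded (in fact vanishing) as $\eta\to 0$. Summing over $i=1,\dots,n$ finishes the argument.

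The main obstacle is matching the natural Rao-type dissipation $\sum_\sigma\tau_\sigma u_{i,\sigma}^k|\mathrm{D}_\sigma(Bu^k)_i|^2$, which lives at the level of $L^2(0,T;L^{4/3}(\Omega))$ bounds for the flux (Lemma~\ref{lem.uB}), with a test-function norm controlled by $\|\cdot\|_{1,4,\T}$. This forces the precise H\"older pairing $(4/3,4)$ on the dual mesh and explains why the dual norm $\|\cdot\|_{-1,4,\T}$ rather than, say, the more common $\|\cdot\|_{-1,2,\T}$ appears in the statement. The artificial-diffusion term is harmless thanks to the favorable interplay between the factor $\eta^{2\alpha}$ and the $\eta^{-\alpha}$ loss in Lemma~\ref{lem.inv}.
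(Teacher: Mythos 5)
Your proposal is correct and follows essentially the same route as the paper's proof: test against $w\in V_\T$ with $\|w\|_{1,4,\T}=1$, integrate by parts discretely, apply the H\"older pairing $(4/3,4)$ to bound the two flux contributions by $\|u_{i,\sigma}^k(B\nabla^{\mathcal D}\widehat u^k)_i\|_{0,4/3,\T}$ and $\eta^\alpha|u_i^k|_{1,4/3,\T}$, and then invoke Lemmas~\ref{lem.uB} and~\ref{lem.inv}, yielding the same final bound $C(\zeta)+C(\zeta)\eta^\alpha$.
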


 \begin{proof}
	Let $\phi\in V_\T$ be such that $\|\phi\|_{1,4,\T}=1$. We multiply \eqref{2.fvm} by
	$\phi_K$, sum over $K\in\T$, apply discrete integration by parts, and use H\"older's inequality:
	\begin{align*}
		\bigg|\sum_{K\in\T}&\frac{\m(K)}{\Delta t}(u_{i,K}^k-u_{i,K}^{k-1})\phi_K\bigg| \\
		&= \bigg|-\sum_{\sigma\in\E_{\rm int}}\tau_\sigma u_{i,\sigma}^k\mathrm{D}_{K,\sigma}p_i(u^k)\mathrm{D}_{K,\sigma}\phi 
		- \eta^\alpha\sum_{\sigma\in\E_{\rm int}}\tau_\sigma
		\mathrm{D}_{K,\sigma}u_i^k\mathrm{D}_{K,\sigma}\phi\bigg| \\
		&\le C\|u_{i,\sigma}^k(B\na^{\mathcal{D}}\widehat{u}^k)_i
        \|_{0,4/3,\T}|\phi|_{1,4,\T} + \eta^\alpha
        |u_i^k|_{1,4/3,\T}|\phi|_{1,4,\T}.
	\end{align*}
	Then we infer from Lemmas \ref{lem.inv} and \ref{lem.uB} that
	\begin{align*}
	\sum_{k=1}^{N}\Delta t\bigg\|\frac{u_i^k-u_i^{k-1}}{\Delta t}\bigg\|_{-1,4,\T}^2
	\le C(\zeta) + C(\zeta)\eta^\alpha,
	\end{align*}
which concludes the proof.
\end{proof}

The solution $u^k\in V_\T$ to \eqref{2.fvm} refers to a fixed mesh. For each $m\in\mathbb{N}$ let $\T_m$ be a spatial mesh of size $\Delta x_m$ such that the family $\{\T_m\}_{m\in \mathbb{N}}$ satisfies the regularity property~\eqref{2.regul} for a fixed $\zeta>0$ that is independent of $m$. For a time step size $\Delta t_m$, denote by
$\mathcal{D}_m$ the space-time mesh determined by $(\T_m,\Delta t_m)$. 
Let $\Delta x_m$ and $\Delta t_m$ be chosen in such a way that the mesh size $\eta_m=\max\{\Delta x_m,\Delta t_m\}$ of $\mathcal{D}_m$ converges
to zero as $m\to\infty$, and set $N_m=T/\Delta t_m$. Let $u_m=(u_{m,1},\ldots,u_{m,n})$ 
be defined as the piecewise constant function $u_m(x,t) = u_K^k$ for $(x,t)\in K\times[t_{k-1},t_k)$, where $u^k$ is a solution to \eqref{2.fvm} on the mesh $\mathcal{D}_m$, $K\in\T_m$, 
and $k=1,\ldots,N_m$, and set 
$u_m^0=(u_{m,i}^0)_{i=1}^n$, where $u_{m,i}^0(x):=u^0_{i,K}(x)$ for $x\in K$.
Notice that $u^0_m\to u^{\rm in}$ in $L^2(\Om)$  as $m\to\infty$.
Furthermore, we introduce the function 
$u_{m,\sigma}:=(u_{m,i,\sigma})_{i=1}^n$ defined by
$u_{m,i,\sigma}(x,t)=u_{i,\sigma}^k$ for 
$(x,t)\in T_{K,\sigma}\times[t_{k-1},t_k)$, where $K\in\T_m$,
$\sigma\in\E_m$, and $k=1,\ldots,N_m$. This function is piecewise
constant on the dual mesh.

Let $\phi\in V_{\T_m}$ be such that $\|\phi\|_{1,4,\T_m}=1$ and let 
$\widehat{u}_m=P_{L^\perp}u_m$. 
We write $(P_{ij})$ for the matrix associated to $P_{L^\perp}$. Then
\begin{align*}
  \bigg|\sum_{K\in\T_m}\m(K)\pa_t^{\Delta t_m}\widehat{u}_{m,i}|_K\phi_K\bigg|
	&= \bigg|\sum_{K\in\T_m}\sum_{j=1}^n\frac{\m(K)}{\Delta t_m}P_{ij}
	(u_{j,K}^k-u_{j,K}^{k-1})\phi_K\bigg| \\
	&\le C\|\pa_t^{\Delta t_m}u_m^k\|_{-1,4,\T}\|\phi\|_{1,4,\T_m} \le C.
\end{align*}
Together with estimate \eqref{4.shannon}, this implies that
$$
  \sum_{k=1}^{N_m}\Delta t_m\|\pa_t^{\Delta t_m}\widehat{u}^k_m\|_{-1,4,\T_m}^2\le C, \quad
  \sum_{k=1}^{N_m}\Delta t_m\|\widehat{u}^k_m\|_{1,2,\T_m}^2 \le C.
$$
It is shown in \cite[Sec.~6.1]{JuZu21} that the discrete norms $\|\cdot\|_{1,2,\T_m}$
and $\|\cdot\|_{-1,4,\T_m}$ satisfy the assumptions of the compactness result in \cite[Theorem 3.4]{GaLa12}, which we recall in Appendix~\ref{app:compactness} for convenience. 
More precisely, by Proposition~\ref{prop:comp.app}, there exists a subsequence, which is not relabeled, such that
$\widehat{u}_m\to v$ strongly in $L^2(\Omega_T)$
as $m\to\infty$ for some $v\in L^2(\Omega_T)$.
Moreover,  up to a subsequence, we have $u_m\rightharpoonup u$ weakly in $L^2(\Omega_T)$ and consequently $\widehat{u}_m=P_{L^\perp}u_m\rightharpoonup 
P_{L^\perp}u=\widehat{u}$ weakly in
$L^2(\Omega_T)$. This shows that $\widehat{u}=v$.

Estimate \eqref{4.shannon} implies that $y_m:=\na^m\widehat{u}_{m}$ 
is uniformly bounded in $L^2(\Omega_T)$. Hence,
there exists a subsequence (not relabeled) such that
$y_m\rightharpoonup y$ weakly in $L^2(\Omega_T)$. We conclude 
as in \cite[Lemma 4.4]{CLP03} that $y=\na\widehat{u}$. We summarize:
\begin{align}\label{eq:weak.conv.uy}
  u_m\rightharpoonup u, \quad
	y_m \rightharpoonup y=\na\widehat{u} 
    \quad\mbox{weakly in }L^2(\Omega_T).
\end{align}
These convergence results are not sufficient to pass to the limit
in the term $u_{m,i,\sigma}\na^m(\BB u_m)_i$.
The idea is to embed the problem in the larger space of Young measures.
Let $\mathcal{P}(W)$ be the
space of probability measures on $W:=\Rge^n\times (L^\perp)^d$.
Since the sequences $(u_m)$ and $(y_m)$ are bounded in $L^2(\Omega_T)$,
 there exists a subsequence
(not relabeled) and a parametrized probability measure $\mu=(\mu_{x,t})\in  L^\infty_w(\Omega_T;\mathcal{P}(W))$ such that the following holds (cf.~\cite{Ball_1989},~\cite[Theorem 6.2]{Ped97}):
If $f\in C(W)$ and if the sequence $(f(u_m,y_m))$ is equi-integrable,
then its weak limit, which we denote by $\overline{f(u_m,y_m)}$, exists and satisfies
$$
  \overline{f(u_m,y_m)}(x,t) 
 =	\la \mu_{x,t}, f(s,p)\ra\quad\text{ for a.e.\ }(x,t)\in\Omega_T.
$$
In the above reasoning $T\in(0,\infty)$ was arbitrary. Hence, a diagonal argument allows us to choose $\mu$ independent
of $T\in(0,\infty)$ such that 
$\mu\in L_w^\infty(\Omega\times(0,\infty);\mathcal{P}(W))$ and the weak convergences~\eqref{eq:weak.conv.uy} hold for all $T>0$. As a consequence,
$$
  u = \langle\mu,s\rangle, \quad \widehat{u} = \langle\mu,\widehat{s}\rangle, \quad
  y = \langle\mu, p\rangle \quad\mbox{a.e. in }\Omega\times(0,\infty),
$$
where $\widehat{s}=P_{L^\perp}s$. 

\subsection{Convergence of the scheme}

We show that $\mu$ is a dissipative measure-valued solution in the sense of Definition \ref{def.dmvs} satisfying \eqref{2.sepa}. The proof adapts the strategy of \cite{CLP03} to the present situation, where only a weaker form of convergence is known to hold. Let $T\in(0,\infty)$, let $i\in\{1,\ldots,n\}$, 
$\psi\in C_0^\infty(\Omega\times[0,T))$, and let 
$\eta_m=\max\{\Delta x_m,\Delta t_m\}$ be small enough such that
$\operatorname{supp}(\psi)\subset\{x\in\Omega:\dist(x,\pa\Omega)>\eta_m\}
\times[0,T)$. We introduce
\begin{align*}
  F_{10}^m &= -\int_0^T\int_\Omega u_{m,i}\pa_t\psi\dd x\dd t
	- \int_\Omega u_{m,i}^0(x)\psi(x,0)\dd x, \\
	F_{20}^m &= \int_0^T\int_\Omega  u_{m,i,\sigma}\na^m(\BB \widehat{u}_{m})_i\cdot\na\psi\dd x\dd t.
\end{align*}
The convergence results established above imply that, as $m\to\infty$,
$$
  F_{10}^m \to -\int_0^T\int_\Omega u_i\pa_t\psi\dd x\dd t
	- \int_\Omega u_i^{\rm in}(x)\psi(x,0)\dd x.
$$

The limit in $F_{20}^m$ is more involved.
First, Lemma \ref{lem.uB} implies that the term $u_{m,i,\sigma}(\BB\na^m\widehat{u}_m)_i$ is weakly relatively compact in $L^1(\Omega_T)$ and thus weakly convergent in $L^1(\Omega_T)$ along a  subsequence. Second, we assert that
\begin{align}\label{eq:strong.densL1}
	u_{m,\sigma}-u_m\to 0
	\quad\text{in }L^1(\Omega_T)\text{ as }m\to\infty.
\end{align}
We proceed as in \cite[Section 4.2]{Oul18}, but
since we cannot control the full
gradient, we need to rely on the artificial diffusion.
It follows from $\m(T_{K,\sigma})=\dist_\sigma^2\tau_\sigma/d$ that 
\begin{align*}
	\|u_{m,i,\sigma}^k-u_{m,i}^k\|_{0,1,\T_m}
	&\le C \sum_{K\in\T_m}\sum_{\sigma\in\E_{{\rm int},K}}\m(T_{K,\sigma})
	|u_{m,i,\sigma}^k-u_{m,i,K}^k| \\
	&\le C\sum_{K\in\T_m}\sum_{\sigma\in\E_{{\rm int},K}}\m(T_{K,\sigma})
	|u_{m,i,L}^k-u_{m,i,K}^k| \nonumber \\
	&\le C\sum_{\sigma=K|L\in\E_{\rm int}}\m(T_{K,\sigma})
	|u_{m,i,L}^k-u_{m,i,K}^k|\nonumber \\
	&\le C\sum_{\sigma=K|L\in\E_{\rm int}}\dist_\sigma^2
	\tau_\sigma|u_{m,i,L}^k-u_{m,i,K}^k|
	\le C\eta_m|u_{m,i}^k|_{1,1,\T_m}, \nonumber 
\end{align*}
where the constant $C>0$ may change from line to line. 
We take the square, multiply by $\Delta t_m$, sum over $k=1,\ldots,N_m$, and use Lemma \ref{lem.inv}:
\begin{align*}
  \sum_{k=1}^{N_m}\Delta t_m\|u_{m,i,\sigma}^k-u_{m,i}^k\|_{0,1,\T_m}^2
  \le C\eta_m^{2-\alpha}.
\end{align*}
As $m\to+\infty$, the right-hand side goes to zero 
provided that $\alpha<2$. Hence, $u_m-u_{m,\sigma}\to0$ strongly in $L^2(0,T;L^1(\Om))$, which implies~\eqref{eq:strong.densL1}. 
We note that, by interpolation, the strong convergence~\eqref{eq:strong.densL1} together with the fact that the sequence $(u_m-u_{m,\sigma})_m$ is uniformly bounded in $L^2(\Om_T)$ implies that 
\begin{align*}
	u_m-u_{m,\sigma}\to0\quad\text{strongly in }L^p(\Om_T)
    \mbox{ for every }p<2.
\end{align*}

We now assert that, as a consequence of~\eqref{eq:strong.densL1}, the sequence $(u_{m,\sigma},\na^m\widehat{u}_m)$ generates the same Young measure $\mu$ as $(u_m,\na^m\widehat{u}_m)$ (after possibly passing to another subsequence). Indeed, since $\mu$ is uniquely determined by its action on $C_0$-functions, to verify the assertion, it suffices to show that 
\begin{align*}
  \lim_{m\to\infty}\int_{\Omega_T} \big(f(u_{m,\sigma},\na^m\widehat{u}_m)
  - f(u_{m},\na^m\widehat{u}_m)\big)\phi\dd x\dd t =0
\end{align*}
for all $f\in C_0(W)$ and $\phi\in L^1(\Omega_T)$. This follows from~\eqref{eq:strong.densL1} and the dominated convergence theorem, because functions $f\in C_0(W)$ are uniformly continuous.
Since $u_{m,i,\sigma}(\BB\na^m\widehat{u}_m)_i$ is weakly convergent in $L^1(\Omega_T)$, we thus infer that 
\begin{equation*}
	\overline{u_{m,i,\sigma}(\BB\na^m\widehat{u}_m)_i}(x,t)
	= \int_{W}s_i(\BB p)_i\dd \mu_{x,t}(s,p)
    =  \langle\mu_{x,t},s_i(\BB p)_i\rangle.
\end{equation*}
We conclude that
$$
  F_{20}^m \to \int_0^T\int_\Omega\langle\mu_{x,t},
  s_i(\BB p)_i\rangle\dd x\dd t.
$$

Let $\psi_K^k=\psi(x_K,t_k)$ and multiply \eqref{2.fvm} by
$\Delta t_m \psi_K^{k-1}$ and sum over $K\in\T_m$, $k=1,\ldots,N_m$. 
This gives $F_1^m+F_2^m+F_3^m=0$, where
\begin{align*}
  F_1^m &=  \sum_{k=1}^{N_m}\sum_{K\in\T_m}
	\m(K)(u_{i,K}^k-u_{i,K}^{k-1})\psi_K^{k-1}, \\
	F_2^m &= -\sum_{k=1}^{N_m}\Delta t_m\sum_{K\in\T_m}\sum_{\sigma\in\E_{{\rm int},K}}
	\tau_\sigma u_{i,\sigma}^k\mathrm{D}_{K,\sigma}(\BB u^k)_i\psi_K^{k-1}, \\
	F_3^m &= -\eta_m^\alpha\sum_{k=1}^{N_m}\Delta t_m\sum_{K\in\T_m}
	\sum_{\sigma\in\E_{{\rm int},K}}\tau_\sigma\mathrm{D}_{K,\sigma}u_i^k\psi_K^{k-1}.
\end{align*}
We infer from the Cauchy--Schwarz inequality and Lemma \ref{lem.inv} that
\begin{align*}
  |F_3^m|\le \eta_m^\alpha\bigg(\sum_{k=1}^{N_m}\Delta t_m |u_i^k|_{1,4/3,\T_m}^2\bigg)^{1/2}\bigg(\sum_{k=1}^{N_m}\Delta t_m
  |\psi^{k-1}|_{1,4,\T_m}^2\bigg)^{1/2}
  \le C\eta_m^{\alpha/2}\to 0
\end{align*}
as $m\to\infty$. We claim that $F_{j0}^m-F_j^m\to 0$ for $j=1,2$. 

For the limit of $F_{10}^m-F_1^m$, we use as in the proof of \cite[Theorem 5.2]{CLP03}
discrete integration by parts in time:
\begin{align*}
  F_{1}^m &= -\sum_{k=1}^{N_m}\sum_{K\in\T_m}
	\m(K)u_{i,K}^k(\psi_K^k-\psi_K^{k-1})
	- \sum_{K\in\T_m}\m(K)u_{i,K}^0\psi_K^0 \\
	&= -\sum_{k=1}^{N_m}\sum_{K\in\T_m}\int_{t_{k-1}}^{t_k}\int_K u_{i,K}^k
	\pa_t\psi(x_K,t)\dd x\dd t -\sum_{K\in\T_m}\int_K u^0_{i,K}\psi(x_K,0)\dd x, \\
	F_{10}^m &= -\sum_{k=1}^{N_m}\sum_{K\in\T_m}\int_{t_{k-1}}^{t_k}
    \int_K u_{i,K}^k\pa_t\psi(x,t)\dd x\dd t 
    -\sum_{K\in\T_m}\int_K u^0_{i,K}\psi(x,0)\dd x.
\end{align*}
It follows from the regularity of $\psi$ that
$$
  |F_{10}^m-F_1^m| \le C(\Omega_T)\|u_{i}^k\|_{L^\infty(0,T;L^2(\Omega))}
	\|\psi\|_{C^2(\overline{\Omega}_T)}\Delta t_m\to 0\quad\mbox{as }m\to\infty.
$$

We deduce from the definition of the discrete gradient that
\begin{align*}
  F_{20}^m &= \sum_{k=1}^{N_m}\int_{t_k}^{t_{k-1}}
  \sum_{\sigma\in\E_{\rm int}}
  \frac{\mm(\sigma)}{\m(T_{K,\sigma})}u_{i,\sigma}^k
  \mathrm{D}_{K,\sigma}(\BB\widehat{u}_m)_i\int_{T_{K,\sigma}}
  \na\psi\cdot\nu_{K,\sigma}\dd x\dd t, \\
  F_2^m &= \sum_{k=1}^{N_m}\int_{t_k}^{t_{k-1}}
  \sum_{\sigma\in\E_{\rm int}}\frac{\mm(\sigma)}{\dist_\sigma}
  u_{i,\sigma}^k\mathrm{D}_{K,\sigma}(\BB\widehat{u}_m)_i
  \mathrm{D}_{K,\sigma}\psi^{k-1}\dd t.
\end{align*}
This gives
\begin{align*}
  |F_{20}^m-F_{2}^m| 
  &\le \sum_{k=1}^{N_m}\sum_{\sigma\in\E_{\rm int}}
  \mm(\sigma)u_{i,\sigma}^k
  |\mathrm{D}_{K,\sigma}(\BB\widehat{u}_m^k)_i| \\
  &\phantom{xx}{}\times\bigg|\int_{t_{k-1}}^{t_k}\bigg(
  \frac{\mathrm{D}_{K,\sigma}\psi^{k-1}}{\dist_\sigma} 
  - \frac{1}{\m(T_{K,\sigma})}
  \int_{T_{K,\sigma}}\na\psi\cdot\nu_{K,\sigma} \dd x\bigg)\dd t\bigg|.
\end{align*}
By the proof of Theorem 5.1 in \cite{CLP03}, there exists $C>0$, independent of $\eta_m$, such that
$$
  \bigg|\int_{t_{k-1}}^{t_k}\bigg(
	\frac{\mathrm{D}_{K,\sigma}\psi^{k-1}}{\dist_\sigma} - \frac{1}{\m(T_{K,\sigma})}
	\int_{T_{K,\sigma}}\na\psi\cdot\nu_{K,\sigma}\dd x\bigg)\dd t\bigg|
	\le C\Delta t_m\eta_m,
$$
which shows, using the Cauchy--Schwarz inequality, that
\begin{align*}
  |F_{20}^m-F_{2}^m| 
  &\le C\eta_m \sum_{k=1}^{N_m}\Delta t_m\sum_{\sigma\in\E_{\rm int}}
  \mm(\sigma)u_{i,\sigma}^k|\mathrm{D}_\sigma(\BB\widehat{u}_m)_i| \\
  &\le C\eta_m\sum_{k=1}^{N_m}\Delta t_m|(\BB u_m^k)_i|_{1,2,\T_m}
  \bigg(\sum_{K\in\T_m}\sum_{\sigma\in\E_{{\rm int},K}}\mm(\sigma)
\dist_\sigma(u_{i,\sigma}^k)^2\bigg)^{1/2}.
\end{align*}
We conclude from the Cauchy--Schwarz inequality, estimate \eqref{4.mdu}, and the uniform bounds \eqref{4.shannon}--\eqref{4.rao} that
\begin{align*}
  |F_{20}^m-F_{2}^m| &\le C(\zeta)\eta_m
	\bigg(\sum_{k=1}^{N_m}\Delta t_m|(\BB u^k)_i|_{1,2,\T_m}^2\bigg)^{1/2}
	\bigg(\sum_{k=1}^{N_m}\Delta t_m\|u_i^k\|_{0,2,\T_m}^2\bigg)^{1/2} \\
	&\le C(\zeta)\eta_m\to 0 \quad\mbox{as }m\to\infty.
\end{align*}
We deduce that $F_{10}^m+F_{20}^m\to 0$ as $m\to\infty$.
Then, because of $F_1^m+F_2^m+F_3^m=0$, 
$$
  F_{10}^m+F_{20}^m = (F_{10}^m-F_1^m) + (F_{20}^m-F_2^m) - F_3^m \to 0 
	\quad\mbox{as }m\to\infty,
$$
which proves that $u_i$ satisfies
$$
  \int_0^T\int_\Omega u_i\pa_t\psi\dd x\dd t + \int_\Omega u_i^{\rm in}\psi(0)\dd x
  = \int_0^T\int_\Omega\langle\mu_{x,t},s_i(\BB p)_i\rangle\cdot 
  \na\psi\dd x\dd t.
$$
Hence, in the sense of distributions,
\begin{equation}\label{4.du}
  \pa_t u_i = \diver\langle\mu,s_i(\BB p)_i\rangle, \quad
  u_i(0) = u_i^{\rm in}, \quad i=1,\ldots,n.
\end{equation}

\subsection{Entropy inequalities}\label{sec.ent.ineq}

We verify the entropy inequalities
\eqref{1.eiBm} and \eqref{1.eiRm}.
The definition of $u^0_m$ and the regularity $u^{\rm in}\in L^2(\Om)$ imply the strong convergence $u^0_m\to u^{\rm in}$ in $L^2(\Om)$  as $m\to\infty$.

{\em Re Shannon:}
Since  $(u_m)_m$ is bounded in $L^2(\Om_T)$, the sequence $(h_S(u_m))_m\subset L^1(\Om_T)$ is equi-integrable.
After passing to a subsequence, we can therefore assume that $(h_S(u_m))_m$ is weakly convergent in $L^1(\Om_T)$, which implies that for a.e.~$(x,t)\in\Om_T$,
$$
\la\mu_{x,t},h_S(s)\ra = \overline{h_S(u_m)}(x,t).
$$
The dual mesh allows us to rewrite the
\SShannon entropy dissipation in \eqref{2.eiB}
as
$$
\sum_{j=1}^k\Delta t_m|\BB^{1/2}u^j_m|^2_{1,2,\T_m}
= \int_0^{t_k}\int_\Omega|\na^m(\BB^{1/2}u_m)|^2\dd x\dd \tau.
$$
Given $0<\delta\ll1$, let $m$ be large enough such that $\Delta t_m<\delta$. 
Then~\eqref{2.eiB}
entails for all $t\in[\delta,T]$ that
\begin{align}\label{eq:Shndisc}
	H_\Shn(u_m(t)) + \int_0^{t-\delta}\int_\Omega|\na^m(\BB^{1/2}u_m)|^2\dd x\dd \tau
	&\le H_\Shn(u^0_m). 
\end{align}
Next, let $\xi\in C^1_c([0,T);\mathbb{R}_\ge)$ with $\xi(0)=1$ and $\xi'\le0$.
We multiply the last inequality by the nonnegative function $-\xi'(t)$
and integrate over $t\in[\delta,T]:$ 
\begin{align*}
		\int_{\delta}^T\int_\Omega (-\xi'(t))
		h_\Shn(u_m(t))\dd x\dd t
+\int_\delta^T(-\xi'(t))
\int_0^{t-\delta}\!\!\int_\Omega|\na^m(\BB^{1/2}\widehat{u}_m)|^2\dd x
\dd \tau\dd t
	\le \xi(\delta) \,H_\Shn(u^0_m).
\end{align*}
We take the $\liminf_{m\to\infty}$ in the above inequality, where we invoke~\cite[Theorem 6.11]{Ped97} for the second term on the left-hand side.
This yields
\begin{align*}
	\int_{\delta}^T(-\xi'(t))\int_\Omega 
	\la \mu_{x,t},h_\Shn(s)\ra\dd x\dd t
	+\int_\delta^T(-\xi'(t))
	\int_0^{t-\delta}\int_\Omega\la\mu_{x,\tau},|\BB^{1/2}p|^2\ra\dd x\dd \tau\dd t
	\le \xi(\delta)H_\Shn(u^{\rm in}).
\end{align*}
As $\delta\downarrow0$, we infer
\begin{align*}
	\int_0^T(-\xi'(t))\int_\Omega 
	\la \mu_{x,t},h_\Shn(s)\ra\dd x\dd t
	+\int_0^T(-\xi'(t))
	\int_0^t\int_\Omega\la\mu_{x,\tau},|\BB^{1/2}p|^2\ra\dd x\dd \tau\dd t
	\le H_\Shn(u^{\rm in}).
\end{align*}
This is true for all $\xi\in C^1_c([0,T);\mathbb{R}_\ge)$ with $\xi(0)=1$ and $\xi'\le0$. We then choose $\xi=\xi_\ell$ with $(\xi_\ell)_\ell$ a suitable approximation of the Heaviside-type function $1_{[0,t_0]}$ and let $\ell\to\infty$ to deduce~\eqref{1.eiBm} at time $t=t_0$ for a.e.\ $t_0\in(0,T]$.
\medskip

{\em Re Rao:}
Next, we verify~\eqref{1.eiRm} and the time monotonicity of $H_R(u)$. Since $(\widehat{u}_m)$ converges strongly to $\widehat{u}$ in $L^2(\Omega_T)$, we find that
$$
  H_R(u(t)) = \frac12\int_\Omega|\BB^{1/2}\widehat{u}(t)|^2\dd x
  = \frac12\lim_{m\to\infty}\sum_{K\in\T_m}\m(K)
  |\BB^{1/2}\widehat{u}_m(t)|^2 = \lim_{m\to\infty}H_R(u_m(t)).
$$
Together with the non-increase of $[0,\infty)\ni t\mapsto H_R(u_m(t))$ (cf.~Theorem~\ref{thm.ex}), this implies 
that the mapping $t\mapsto H_R(u(t))$ is nonincreasing.
It remains to show~\eqref{1.eiRm}. 
To this end, we let $0<\delta\ll 1$ and take $m$ large enough so that $\Delta t_m<\delta$.
Then it follows from the discrete Rao entropy inequality~\eqref{2.eiR} that
$$
	H_R(u_m(t)) + \sum_{i=1}^n\int_0^{t-\delta}\int_\Omega u_{m,i,\sigma}|(\BB\na^m\widehat{u}_m)_i|^2\dd x\dd \tau
\le H_R(u^0_m).
$$
To estimate below the $\liminf_{m\to\infty}$ of the second term on the left-hand side, we recall  that $\mu$ is also the Young measure associated with 
$(u_{m,\sigma},\na^m\widehat{u}_m)$. We therefore infer from~\cite[Theorem 6.11]{Ped97} for every $i\in\{1,\dots,n\}$
\begin{align*}
\int_0^{t-\delta}\int_\Omega \la\mu_{x,\tau},s_i|(\BB p)_i|^2\ra\dd x\dd \tau\le 	\liminf_{m\to\infty}\int_0^{t-\delta}\int_\Omega u_{m,i,\sigma}|(\BB\na^m\widehat{u}_m)_i|^2\dd x\dd \tau.
\end{align*}
Thus, in the limit $m\to\infty$ we deduce
$$
H_R(u(t)) + \sum_{i=1}^n
\int_0^{t-\delta}\int_\Omega \la\mu_{x,\tau},s_i|(\BB p)_i|^2\ra\dd x\dd \tau
\le H_R(u^{\rm in}),
$$
and sending $\delta\downarrow0$ we obtain~\eqref{1.eiRm}.

\subsection{Separation of the \texorpdfstring{$\widehat{s}$-}{}component} 

For simplicity, we only prove identity~\eqref{2.sepa} in the case where $f=f(s)\in C_0(\Rge^n)$. 
Let $g(s_1,s_2)=f(s_1+s_2)$, defined on the convex set
$$
  Q = \{(s_1,s_2)\in L^\perp\times L:s_1+s_2\in\Rge^n\}. 
$$
Since the sequence $(\widehat{u}_m)$ converges strongly in $L^2(\Omega_T)$,
the Young measure $\widetilde{\mu}$, generated by $(P_{L^\perp}u_m,P_Lu_m)$,
has the form $\widetilde{\mu}_{x,t}=\delta_{\widehat{u}(x,t)}\otimes\nu_{x,t}$,
where $\nu=(\nu_{x,t})$ is the Young measure generated by the sequence
$(P_Lu_m)$ \cite[Prop.~6.13]{Ped97}. Hence, by construction of
$\mu$ and $\widetilde{\mu}$,
\begin{align*}
	\int_{\Rge^n}f(s)\dd \mu_{x,t}(s)
	&= \int_Q g(s_1,s_2)\dd\widetilde{\mu}_{x,t}(s_1,s_2)
	= \int_Q g(\widehat{u}(x,t),s_2)\dd \widetilde{\mu}_{x,t}(s_1,s_2) \\
	&= \int_Q f(\widehat{u}(x,t)+s_2)\dd \widetilde{\mu}_{x,t}(s_1,s_2).
\end{align*}
 It follows that $\la\mu_{x,t},f(s)\ra=\la {\widetilde{\mu}_{x,t}}, f(\widehat{u}(x,t)+s_2)\ra$
	for all $f=f(s)\in C_0(\Rge^n)$ and a.a.\ $(x,t)$.

\subsection{Time regularity}\label{ssec:tempreg}

The time regularity for the density part $u=\la\mu,s\ra$ of 
the barycenter of $\mu$ follows from the continuity equation~\eqref{4.du}. To see this, 
we first note that due to $b_{ii}>0, b_{ij}\ge0$, and 
 property~\eqref{2.sepa},
\begin{align}\label{eq:L2.mv}
	\bigg\langle\mu_{x,t},\sum_{i=1}^n s_i^2\bigg\rangle
    \le C\big\langle\mu_{x,t},|B^{1/2}\widehat{s}|^2\big\rangle
    = C|B^{1/2}\widehat u(x,t)|^2 = Ch_R(u(x,t))
\end{align}
 for a.e.\ $(x,t)\in\Om\times(0,\infty)$.
Then we use Jensen's inequality to estimate for $i=1,\dots,n$,
\begin{align*}
	\|\la\mu&,s_i(B\qbf)_i\ra\|_{L^2(0,\infty;L^{4/3}(\Om))}^2
	\le\int_0^\infty \bigg(\int_\Om\la\mu_{x,t},|s_i(B\qbf)_i|^{4/3}\ra\dd x
	\bigg)^{3/2}\dd t \\
	&\le\int_0^\infty \bigg(\int_\Om\langle\mu_{x,t},s_i^2\rangle^{1/3}
	\langle\mu_{x,t},s_i|(B\qbf)_i|^2\rangle^{2/3}\dd x
	\bigg)^{3/2}\dd t \\
	&\le \int_0^\infty \bigg(\int_\Om\langle\mu_{x,t},s_i^2\rangle\dd x\bigg)^{1/2}
	\int_\Om\langle\mu_{x,t},s_i|(B\qbf)_i|^2\rangle\dd x
    \dd t \\
	&\le \bigg(\esssup_{0<t<\infty}
	\int_\Om\langle\mu_{x,t},s_i^2\rangle\dd x\bigg)^{1/2}
    \bigg(\int_0^\infty\int_\Om\langle\mu_{x,t},s_i|(B\qbf)_i|^2
    \rangle\dd x\dd t\bigg),
\end{align*}
where H\"older's inequality was applied several times.
It therefore follows from~\eqref{4.du} that 
\begin{align*}
		\|\partial_tu_i\|_{L^2(0,\infty;W^{1,4}(\Om)^*)}
		\le 	\|\la\mu,s_i(B\qbf)_i\ra\|_{L^2(0,\infty;L^{4/3}(\Om))}
		\le CH_R(u^{\rm in}),
\end{align*}
where the last step also uses~\eqref{1.eiRm} and~\eqref{eq:L2.mv}.
This finishes the proof of Theorem~\ref{thm.conv}. 

\begin{remark}[Curved domains]\label{rem.curved}\rm
We claim that Theorems \ref{thm.ex} and \ref{thm.conv} also hold for curved Lipschitz domains $\Omega\subset\R^d$. 
The triangulation then contains control volumes with curved segments that are part of $\pa\Omega$. The analysis of this section is still possible, since we consider no-flux boundary conditions and no boundary values need to be defined. 
 The analysis has to be adapted in two points. First, the convergence of the scheme is typically proved on polygonal meshes and the error between the curved cell and the polygonal cell (which is of order $(\Delta x)^{d+1}$) needs to be taken into account. Second, as the compactness of the approximate sequence has been established for polygonal domains \cite{GaLa12}, the error between the approximate sequence and its extension by zero to the polygonal domain has to be estimated. In two space dimensions, it is of order $\Delta x$; see \cite[Prop.~4.14]{Nab16} for details.
The drawback of this approach is that one has to perform numerical integrations over the curved elements, which may be cumbersome in particular in three space dimensions. 
\begin{figure}[ht]
	\includegraphics[height=45mm,width=65mm]{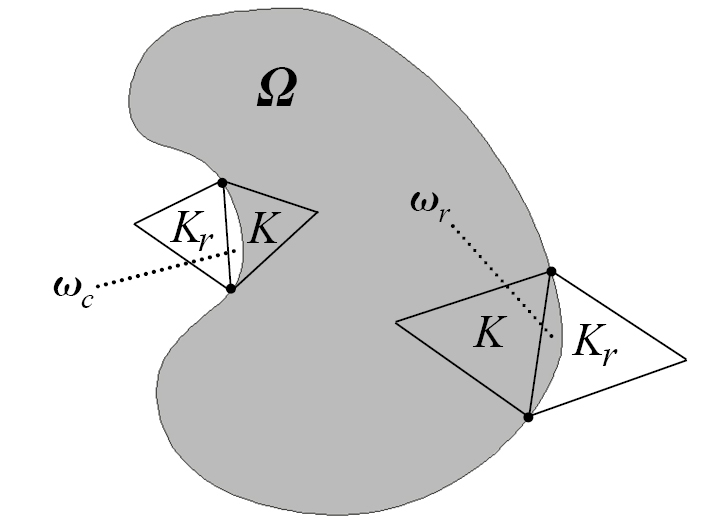}
	\caption{Triangulation of a curved domain.}
	\label{fig.domain}
\end{figure}

Here we report on the simple approach of \cite{ElJa83}. The idea is to cover $\Omega$ by additional control volumes and to estimate the integral error. To simplify the presentation, let $\Omega\subset\R^2$ and let $\T$ be a sufficiently fine triangulation of $\Omega$ into triangles. 
To each cell with two vertices on $\pa\Omega$, we add the reflected triangle to the triangulation such that $\Omega\subset\cup_{K\in\T^*}K$, where $\T^*$ consists of all cells $K\in\T$ and the associated reflected cells $K_r$ with nonempty intersection with $\Omega$; see Figure \ref{fig.domain}. Denoting by $\omega_r=K_r\cap\Omega$ if $K_r\cap\Omega\neq\emptyset$ and $\omega_c=K\setminus\Omega$ if $K_r\cap\Omega=\emptyset$, the domain splits into 
$$
  \Omega = \Omega_h \cup \Omega_r \setminus \Omega_c
  := \bigg(\bigcup_{K\in\T}K\bigg)\cup
  \bigg(\bigcup_{\omega_r}\omega_r\bigg)\setminus
  \bigg(\bigcup_{\omega_c}\omega_c\bigg).
$$
We can perform the numerical analysis on $V_{\T^*}$ as in Sections \ref{sec.ex} and \ref{sec.conv}. For the convergence of the scheme, we need to show that the difference of the integrals over $\Omega_h$ and $\Omega$ vanishes when $\eta_m\to 0$. The difference consists of two contributions: the integral over $\Omega_r$ and the integral over $\Omega_c$. We illustrate the convergence for the integral
\begin{align*}
  \bigg|\int_{\Omega_r}u_{m,i,\sigma}\na^m(\BB\widehat{u}_m)_i
  \cdot\na\psi\dd x\bigg|
  \le C\sum_{\omega_r}\m(\omega_r)\|u_{m,i,\sigma}\|_{0,\infty,\omega_r}
  \|\na^m(\BB\widehat{u}_m)_i\|_{0,\infty,\omega_r},
\end{align*}
where $\psi$ is a smooth test function.
 By the inverse inequality \cite[Section 21.1]{ErGu21}
$$
  \|v\|_{0,\infty,\omega_r}
  \le \|v\|_{0,\infty,K_r}
  \le C(\Delta x)^{-d/2}\|v\|_{0,2,K_r},
$$
the bound $\m(\omega_r)\le C(\Delta x)^{d+1}$ (which is valid under certain regularity conditions on the mesh), and the Cauchy--Schwarz inequality, we have
\begin{align*}
  \bigg|\int_{\Omega_r}u_{m,i,\sigma}\na^m(\BB\widehat{u}_m)_i
  \cdot\na\psi\dd x\bigg|
  &\le C\Delta x
  \bigg(\sum_{K_r}\|u_{m,i,\sigma}\|_{0,2,K_r}^2\bigg)^{1/2}
  \bigg(\sum_{K_r}\|\na^m(\BB\widehat{u}_m)_i\|_{0,2,K_r}^2
  \bigg)^{1/2} \\
  &\le C\Delta x\to 0\quad\mbox{as }\eta\to 0,
\end{align*}
taking into account the uniform bounds from \eqref{2.eiB} and \eqref{2.eiR}. In a similar way, the integral over $\Omega_c$ tends to zero as $\eta\to 0$.
\end{remark}

\section{Stability}

In this section, we prove Theorem~\ref{thm.wsu}.
Let $\mu$ be a dissipative measure-valued solution and let $v\in C^1(\overline{\Omega}_T)$ be a positive 
solution of \eqref{1.eq}, \eqref{1.bic}.
We introduce the relative  \SShannon and Rao entropies by, respectively,
\begin{align*}
	H_\Shn^{\rm mv}(u(t)|v(t)) 
	&= \sum_{i=1}^n\int_\Omega \big(\langle\mu_{x,t},\shn(s_i)\rangle
	- \shn(v_i(x,t)) - \shn'(v_i(x,t))\cdot(u_i-v_i)(x,t)\big) \dd x, \\
	&= \int_\Omega \sum_{i=1}^n\big(\langle\mu_{x,t},s_i\log s_i\rangle
	- u_i\log v_i-(u_i-v_i)\big) \dd x\ge0, \\
	H_R(u(t)|v(t)) &= \frac12\int_\Omega|\BB^{1/2}(u-v)(x,t)|^2\dd x\ge 0,
\end{align*}
where $\shn(z)=z(\log z-1)+1$ for $z\ge 0$. 
We further define the usual relative  \SShannon  entropy 
$H_\Shn(u|v)=\int_\Omega \sum_{i=1}^n\big(u_i\log u_i
- u_i\log v_i-(u_i-v_i)\big) \dd x.$
Furthermore, we set
\begin{align*}
	H_{\rm rel}^{\rm mv}(u|v) = H_\Shn^{\rm mv}(u|v) + H_R(u|v),
	\\H_{\rm rel}(u|v) = H_\Shn(u|v) + H_R(u|v).
\end{align*}
We first compute the relative entropy inequalities.

\begin{lemma}[Relative entropy inequalities]
Suppose that $\Omega$ has a Lipschitz boundary.
Let $\mu$ be a dissipative measure-valued solution, $u:=\la\mu,s\ra$, and let $v\in C^1(\overline{\Omega}_T)$  be a positive solution to \eqref{1.eq}, \eqref{1.bic} for $t\in(0,T)$ (in the weak sense). 
Then, for a.e.\ $t\in(0,T)$,
\begin{align}
    &H_\Shn^{\rm mv}(u(t)|v(t)) 
	+ \int_0^t\int_\Omega\big\langle\mu_{x,\tau},
	|\BB^{1/2}(p-\na v)|^2\big\rangle\dd x\dd \tau \label{4.HB} \\
	&\phantom{x}{}+ \int_0^t\int_\Omega\bigg\langle\mu_{x,\tau},\sum_{i=1}^n
	(v_i-s_i)\na \log v_i \cdot(\BB(p-\na v))_i\bigg\rangle\dd x\dd \tau
    \le H_\Shn(u^{\rm in}|v(0)), \nonumber \\
	&H_R(u(t)|v(t)) 
	 +\int_0^t\int_\Omega \sum_{i=1}^n\la\mu_{x,\tau},s_i|(B(p-\nabla v))_i|^2\ra\dd x\dd \tau \label{4.HR} \\
	&\phantom{x}{}+ \int_0^t\int_\Omega\bigg\langle\mu_{x,\tau},
	\sum_{i=1}^n(s_i-v_i)\na(\BB v)_i\cdot(\BB(p-\na v))_i
	\bigg\rangle\dd x\dd \tau \le H_R(u^{\rm in}|v(0)).\nonumber
\end{align}
\end{lemma}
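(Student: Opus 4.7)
My plan is to derive both inequalities by combining the available entropy information with a cross-term computation. Using the decompositions
\begin{align*}
  H_\Shn^{\rm mv}(u|v) &= H_\Shn^{\rm mv}(u) - H_\Shn(v) - \int_\Omega \sum_i \log v_i\,(u_i - v_i)\dd x, \\
  H_R(u|v) &= H_R(u) + H_R(v) - \int_\Omega v\cdot (\BB u)\dd x,
\end{align*}
I would control the $\mu$-piece via the MV entropy inequalities~\eqref{1.eiBm}, \eqref{1.eiRm}, the $v$-piece via the classical entropy identities for $v$ (obtained by direct differentiation along the smooth flow), and the cross term by testing the weak continuity equation~\eqref{1.evol} for $u_i$ against a smooth $v$-dependent test function. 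This is legitimate thanks to the temporal regularity $\pa_t u_i\in L^2(0,T;W^{1,4}(\Omega)^*)$, which yields a weakly continuous representative of $u_i$ and permits integration by parts in time against any $\phi\in C^1(\overline\Omega\times[0,t])$.

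For the Rao part the natural cross test function is $(\BB v)_i\in C^1(\overline\Omega_T)$. Testing~\eqref{1.evol} against $(\BB v)_i$ and evolving $\int_\Omega v\cdot(\BB u)\dd x$ in time yields
\begin{align*}
\int_\Omega v(t)\cdot\BB u(t)\dd x - \int_\Omega v(0)\cdot\BB u^{\rm in}\dd x
&= -\int_0^t\sum_i\int_\Omega\la\mu,s_i(\BB p)_i\ra\cdot(\BB\na v)_i\dd x\dd\tau \\
&\quad + \int_0^t\sum_i\int_\Omega u_i\,\pa_\tau(\BB v)_i\dd x\dd\tau,
\end{align*}
and the last integrand is rewritten as $-\sum_i v_i(\BB\na v)_i\cdot(\BB y)_i$ via the classical $v$-equation and one integration by parts whose boundary integrand vanishes directly from the no-flux BC $v_i\na(\BB v)_i\cdot\nu = 0$. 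Substituting this identity together with~\eqref{1.eiRm} and the $v$-identity into the decomposition of $H_R(u|v)$ reduces~\eqref{4.HR}, after expanding $|(\BB(p-\na v))_i|^2$ and the targeted cross term, to a purely algebraic identity.

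The Shannon part proceeds along the same lines with test function $\log v_i$, which is admissible since $v>0$ and $v\in C^1(\overline\Omega_T)$. Using $\pa_\tau\log v_i = \Delta(\BB v)_i + \na\log v_i\cdot\na(\BB v)_i$ from the classical equation for $v$, the cross-term evolution produces the expression $\sum_i\int_\Omega u_i\Delta(\BB v)_i\dd x$, whose treatment is the main obstacle: the barycenter $u_i$ has no a priori spatial gradient, so a direct integration by parts is ruled out. The resolution is to exploit the symmetry of $\BB$ to rewrite
\begin{equation*}
\sum_i\int_\Omega u_i\Delta(\BB v)_i\dd x = \sum_j\int_\Omega (\BB u)_j\Delta v_j\dd x,
\end{equation*}
and then integrate by parts; note that $(\BB u)_j=(\BB\widehat u)_j\in H^1(\Omega)$ has a well-defined trace. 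The resulting boundary contribution is $\int_{\pa\Omega}(\BB u)\cdot(\na v\cdot\nu)\dd S$, and the decisive observation is that this vanishes: the no-flux condition $v_i\na(\BB v)_i\cdot\nu = 0$ combined with $v>0$ yields $(\BB\na v)\cdot\nu = \BB(\na v\cdot\nu) = 0$ on $\pa\Omega$, i.e., $\na v\cdot\nu\in L=\ker\BB$; since $\BB u=\BB\widehat u\in\mathrm{ran}\,\BB = L^\perp$ pointwise, the dot product $(\BB u)\cdot(\na v\cdot\nu)$ vanishes identically. An algebraic verification analogous to the Rao case then yields~\eqref{4.HB}.
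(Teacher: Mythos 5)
Your overall architecture coincides with the paper's: split each relative entropy into the measure-valued entropy of $\mu$, the entropy of $v$, and a cross term; control the first two by the entropy inequalities \eqref{1.eiBm}--\eqref{1.eiRm} and the corresponding identities for $v$; and compute the cross term by testing each evolution equation against a function of the other solution. The Rao part of your argument is essentially the paper's (the paper tests the weak formulation of the $v$-equation directly with $1_{[0,t]}\BB(v-u)\in L^2(0,T;H^1(\Omega))$, which packages your ``one integration by parts with vanishing boundary term'' into a single step), and your algebraic bookkeeping for both decompositions is correct.

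There is, however, a genuine gap in the Shannon part. The lemma assumes only $v\in C^1(\overline\Omega_T)$, positive, and a solution \emph{in the weak sense}. Your computation of the cross term rests on the pointwise identity $\pa_\tau\log v_i=\Delta(\BB v)_i+\na\log v_i\cdot\na(\BB v)_i$, i.e.\ on the strong form of the equation and on second spatial derivatives of $\BB v$, neither of which is available under the stated hypotheses ($\Delta(\BB v)_i$ is only a distribution of order one, and its subsequent pairing with $u_i\in L^\infty(0,T;L^2(\Omega))$, as well as the extraction of the pointwise boundary identity $\na v\cdot\nu\in\ker\BB$ from a merely weak formulation, would each require separate justification). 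The paper circumvents all of this: it writes the weak formulation of the $v$-equation against the test function $\psi_i/v_i$ (admissible since $v$ is positive and $C^1$), which yields $-\sum_i\int_\Om(\pa_t\log v_i)\psi_i\,\dd x=\int_\Om\na v:\na(\BB\widehat\psi)\,\dd x-\sum_i\int_\Om\na(\BB v)_i\cdot(\na\log v_i)\psi_i\,\dd x$ using only first derivatives of $v$, and then substitutes $\psi=u$, exploiting $\na\BB\widehat u=\la\mu,\BB p\ra\in L^2$. This produces exactly the identity you arrive at formally (your boundary observation that $\BB u\in\operatorname{ran}\BB$ is orthogonal to $\na v\cdot\nu\in\ker\BB$ is correct and is implicitly what makes the paper's boundary-term-free weak formulation close up), so your proof is repairable, but as written it establishes the Shannon inequality only for classical $C^2$ solutions $v$, not under the lemma's hypotheses.
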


\begin{proof}
It follows from~\eqref{1.evol} 
that for all $i=1,\ldots,n$ and
 $\phi\in L^2(0,T;W^{1,4}(\Omega))$ 
\begin{equation}\label{4.evol}
\int_0^T(\pa_t u_i,\phi)_{W^{1,4}(\Omega)^*}\dd t
	= -\int_0^T\int_\Omega\langle\mu_{x,t},s_i(\BB p)_i
	\rangle\cdot\na\phi \dd x\dd t,
\end{equation}
where $(\cdot,\cdot)_{W^{1,4}(\Omega)^*}$ denotes the duality pairing
between $W^{1,4}(\Omega)^*$ and $W^{1,4}(\Omega)$.

{\em Re Shannon:} 
The solution property and positivity of $v$ imply that for every $\psi\in C^1(\overline\Om_T;\R^n)$, 
\begin{align*}
	-\sum_{i=1}^n\int_\Om (\partial_t\log v_i) \psi_i\dd x
	&=	\int_\Om \sum_{i=1}^n v_i\nabla (Bv)_i\cdot\nabla \bigg(\frac{\psi_i}{v_i}\bigg)\dd x \\
	&=	\int_\Om \nabla v:\nabla (B\widehat \psi)\dd x
	-\sum_{i=1}^n\int_\Om \nabla (Bv)_i\cdot(\na\log v_i)\psi_i\dd x.
\end{align*}
Let $t\in(0,T)$ be arbitrary. An integration over  $\tau\in(0,t)$ and an approximation argument imply that for all $\psi\in L^2(\Om_T;\R^n)$ with $\nabla B\widehat\psi\in L^2(\Om_T)$,
\begin{align*}
	-\sum_{i=1}^n\int_0^t\int_\Om (\partial_t \log v_i)\psi_i\dd x\dd\tau
	&=	\int_0^t\int_\Om \nabla v:\nabla B\widehat\psi\dd x\dd\tau \\
	&\phantom{xx}{}-\sum_{i=1}^n\int_0^t\int_\Om \nabla (Bv)_i\cdot(\na\log v_i) \,\psi_i\dd x\dd\tau.
\end{align*}
The choice $\psi=u=\la\mu,s\ra$ and the property
$\na B\widehat{u}=B\langle\mu,p\rangle=\langle\mu,Bp\rangle$ lead to
\begin{align*}
	-\sum_{i=1}^n&\int_0^t\int_\Om (\partial_t\log v_i)u_i\dd x\dd\tau= \\
	&= \int_0^t\int_\Om \nabla v:\nabla B\widehat u\dd x\dd\tau
	- \sum_{i=1}^n\int_0^t\int_\Om \na (Bv)_i\cdot(\na\log v_i) u_i\dd x\dd\tau \\
	&=	\int_0^t\int_\Om \la\mu_{x,\tau},B^{1/2}\nabla v:B^{1/2}p\ra\dd x\dd\tau
	-	\sum_{i=1}^n\int_0^t\int_\Om \la\mu_{x,\tau}, s_i\nabla\log v_i \cdot \nabla (Bv)_i\ra\dd x\dd\tau.
\end{align*}
Next, we use $\phi_i=1_{[0,t]}\log v_i$ as a test function in the weak formulation \eqref{4.evol}, multiply by $-1$, and sum over $i=1,\dots ,n$:
\begin{align*}
	-\sum_{i=1}^n\int_0^t( \partial_t u_i,\log v_i )_{W^{1,4}(\Om)^*}\dd\tau
	= \sum_{i=1}^n\int_0^t\int_\Omega\langle\mu_{x,\tau},s_i(\BB p)_i
	\rangle\cdot\na\log v_i  \dd x\dd \tau.
\end{align*}
We add the previous two equations:
\begin{align*}
	-\int_0^t\frac{\dd}{\dd t}\int_\Om\sum_{i=1}^n(\log v_i)u_i\dd x\dd\tau
	&=	\int_0^t\int_\Om \la\mu_{x,\tau},B^{1/2}\nabla v:B^{1/2}p\ra\dd x\dd\tau \\
	&\phantom{xx}{}+\int_0^t\int_\Om \la\mu_{x,\tau}, \sum_{i=1}^ns_i\nabla\log v_i \cdot  (B(p-\nabla v))_i\ra
    \dd x\dd\tau.
\end{align*}
Combined with the identity
\begin{align*}
	\int_0^t\int_\Om &\la\mu_{x,\tau},B^{1/2}\nabla v:B^{1/2}p\ra
    \dd x\dd\tau
	-\int_0^t\int_\Om \la\mu_{x,\tau},|B^{1/2}\nabla v|^2\ra\dd x\dd\tau\\
    &-\int_0^t\int_\Om \bigg\la\mu_{x,\tau}, \sum_{i=1}^nv_i\nabla\log v_i \cdot  (B(p-\nabla v))_i\bigg\ra\dd x\dd\tau
	=0,
\end{align*}
the Shannon entropy inequality~\eqref{1.eiBm}, and mass conservation $ (\dd/\dd t)\int_\Om v_i\,\dd x=0$,  this gives~\eqref{4.HB}.

{\em Re Rao:}
Since $v_i\nabla (Bv)_i\in L^2(\Omega_T)$, we can test the equation for $v$ with $1_{[0,t]}\,B(v-u)\in L^2(0,T;H^1(\Omega))$. This yields
\begin{align*}
	\int_0^t\int_\Omega\partial_t v^TB(v-u)\dd x\dd \tau 
	= -\int_0^t\int_\Omega \sum_{i=1}^nv_i\nabla(Bv)_i\cdot \nabla (B(v-u))_i\dd x\dd \tau. 
\end{align*}
Next, we choose $\phi=1_{[0,t]}(Bv)_i$ in equation~\eqref{4.evol} for $u$ and sum over $i=1,\dots,n$:
\begin{align*}
	-\int_0^t(\partial_t u,Bv)_{W^{1,4}(\Omega)^*}\dd \tau 
	= \int_0^t\int_\Omega \sum_{i=1}^n\la\mu_{x,\tau},s_i(Bp)_i\ra\cdot  (B\nabla v)_i\dd x\dd \tau.
\end{align*}
Adding to these identities the Rao entropy inequality~\eqref{1.eiRm} and rearranging terms gives
\begin{align*}
		\frac{1}{2}\int_0^t&\frac{\dd}{\dd t}\int_\Omega
		(u-v)^TB(u-v)\dd x\dd \tau 
		\le-\int_0^t\int_\Omega \sum_{i=1}^n\la\mu_{x,\tau},s_i|(B(p-\nabla v))_i|^2\ra\dd x\dd \tau \\
        &{}- \int_0^t\int_\Omega \sum_{i=1}^n\la\mu_{x,\tau},(s_i-v_i)(B\nabla v)_i\cdot  (B(p-\nabla v))_i\dd x\dd \tau, 
\end{align*}
which implies~\eqref{4.HR}, concluding the proof.
\end{proof}

We proceed with the proof of Theorem \ref{thm.wsu}. To this end,
we estimate the last integrals on the left-hand sides of \eqref{4.HB} and
\eqref{4.HR}. We infer from Young's inequality that
\begin{align}
  \bigg|\sum_{i=1}^n&(v_i-s_i)\na \log v_i 
	\cdot(\BB(p-\na v))_i\bigg| \label{4.aux1} \\
	&\le \frac14|\BB^{1/2}(p-\na v)|^2 + C\sum_{i=1}^n |\na\log v_i|^2(s_i-v_i)^2
	\nonumber \\
	&\le \frac14|\BB^{1/2}(p-\na v)|^2 + C|s-v|^2, \nonumber \\
	\bigg|\sum_{i=1}^n&(s_i-v_i)\na(\BB v)_i\cdot
	(\BB(p-\na v))_i\bigg| \label{4.aux2} \\
	&\le \frac14|\BB^{1/2}(p-\na v)|^2 + C\sum_{i=1}^n|\na(\BB v)_i|^2
	(s_i-v_i)^2 \nonumber \\
	&\le \frac14|\BB^{1/2}(p-\na v)|^2 + C|s-v|^2, \nonumber
\end{align}
where $C>0$ depends on the $L^\infty(\Omega_T)$ norms of 
$|\na\log v_i|$ and $\na(\BB v)_i$.
Thus, adding the relative entropy inequalities \eqref{4.HB} and \eqref{4.HR},
the first terms on the right-hand sides of \eqref{4.aux1} and \eqref{4.aux2}
can be absorbed by the left-hand side of \eqref{4.HB} such that
\begin{align}\label{eq:ediss-rel}\begin{aligned}
   H_{\rm rel}^{\rm mv}(u(t)|v(&t)) + \int_0^t\int_\Omega\bigg\langle\mu_{x,\tau},
	\frac12|\BB^{1/2}(p-\na v)|^2 
	\bigg\rangle\dd x\dd \tau \\
	&\le C\int_0^t\int_\Omega\langle\mu_{x,\tau},|s-v|^2\rangle\dd x\dd \tau
	+ H_{\rm rel}(u^{\rm in}|v(0)). 
\end{aligned}
\end{align}
By property~\eqref{2.sepa}, we have $|\BB^{1/2}(u-v)|^2=|\BB^{1/2}(\widehat{u}-v)|^2=\la\mu,|\BB^{1/2}(s-v)|^2\ra$.
Thus, the coercivity estimate from Lemma \ref{lem.coerc} in Appendix \ref{sec.app} implies that
$$
  \int_\Omega\langle\mu_{x,t},|s-v(x,t)|^2\rangle\dd x \le CH_{\rm rel}^{\rm mv}(u(t)|v(t)).
$$
We insert this bound into~\eqref{eq:ediss-rel}
and invoke Gronwall's inequality to deduce that
\begin{align*}
  H_{\rm rel}^{\rm mv}(u(t)|v(t)) &+\int_0^t\int_\Omega\bigg\langle\mu_{x,\tau},
  \frac12|\BB^{1/2}(p-\na \widehat{v}(x,\tau))|^2 
  \bigg\rangle\dd x\dd \tau \le {\rm e}^{Ct}H_{\rm rel}(u^{\rm in}|v(0)) =0,
\end{align*}
where the last equality follows from $v(0)=u^{\rm in}$.
Hence, $\mu_{x,t}=\delta_{v(x,t)}\otimes
\delta_{\na\widehat{v}(x,t)}$ for a.e.\ $(x,t)\in\Om\times(0,T)$, which finishes the proof of Theorem \ref{thm.wsu}.

\section{Long-time asymptotics}
\label{sec.time}

In this section, we prove Theorem~\ref{thm.time}.
First, we verify that $\mathfrak{S}_\mathsf{m}\subset L^\infty(\Omega)$. Indeed, if $v\in \mathfrak{S}_\mathsf{m}$,
the vector $\BB v$ is constant and $\int_\Omega \BB v\dd x = \BB \mathsf{m}$,
which implies that $\BB v=(\BB \mathsf{m})/|\Omega|$.
Since the entries of $\BB$ and the components of $v$ are nonnegative,
$v_i \le (\BB \mathsf{m})_i/(b_{ii}|\Omega|)$ for all $i=1,\ldots,n$. This proves the 
claim.

The entropy inequalities \eqref{1.eiBm}--\eqref{1.eiRm} and the bound
$|\la\mu,B^{1/2}p\ra|^2\le \la\mu,|B^{1/2}p|^2\ra$, which follows from Jensen's inequality, show that
$$
  \int_0^\infty\|\na(\BB^{1/2}u)\|_{L^2(\Omega)}^2\dd t < \infty, \quad
	\sup_{0<t<\infty}\|u(t)\|_{L^2(\Omega)} < \infty.
$$
Thus, there exists a sequence $(t_k)\subset(0,\infty)$ with $t_k\to\infty$
such that $u(t_k)\rightharpoonup u^*$ weakly in $L^2(\Omega)$ and
$\BB^{1/2}u(t_k)\to \BB^{1/2}u^*$ strongly in $L^2(\Omega)$ as $k\to\infty$.
Since $\int_\Omega u(t_k)\dd x=\mathsf{m}$ and the sequence
$(\na(\BB^{1/2}u(t_k)))$ converges to zero 
in the $L^2(\Omega)$ norm, we find that 
$\int_\Omega u^*\dd x=\mathsf{m}$ and $\na(\BB^{1/2}u^*)=0$.
This implies that $u^*\in \mathfrak{S}_\mathsf{m}$. 
Moreover, we deduce from the strong convergence that
$$
  \lim_{k\to\infty}H_R(u(t_k)|u^*) = \frac12\lim_{k\to\infty}
	\|\BB^{1/2}(u(t_k)-u^*)\|_{L^2(\Omega)}^2 = 0.
$$

We assert that $t\mapsto H_R(u(t)|u^*)$ is nonincreasing for a.e.\ $t>0$. Indeed, we know from  Section \ref{sec.ent.ineq}
that $t\mapsto H_R(u(t))$ is nonincreasing. Furthermore, since
$\int_\Omega u(t)\dd x=\int_\Omega u^*\dd x$ and $\BB u^*$ is a constant vector,
we have $\int_\Omega u(t)^T\BB u^*\dd x = \int_\Omega u(s)^T \BB u^*\dd x$ for all $s,t\ge0$. Hence, for $t\ge s$,
\begin{align*}
  H_R(u(t)|u^*) &= H_R(u(t)) + H_R(u^*) - \int_\Omega u(t)^T\BB u^*\dd x \\
	&\le H_R(u(s)) + H_R(u^*) - \int_\Omega u(s)^T \BB u^*\dd x = H_R(u(s)|u^*),
\end{align*}
proving the claim.

We conclude that $H_R(u(t)|u^*)\le H_R(u(t_k)|u^*)\to 0$ for $t\ge t_k\to\infty$.
It follows from the positive definiteness of $\BB^{1/2}$ on $L^\perp$ that
$$
  \|\widehat{u}(t)-\widehat{u}^*\|_{L^2(\Omega)} 
	\le C\|\BB^{1/2}(\widehat{u}(t)-\widehat{u}^*)\|_{L^2(\Omega)}
	\le 2H_R(u(t)|u^*)\to 0
$$
as $t\to\infty$. This finishes the proof of Theorem \ref{thm.time}.

\begin{appendix}
\section{Auxiliary results}\label{sec.app}

Let the matrix $\BB=( b_{ij})\in\R^{n\times n}$ be symmetric 
positive semidefinite. Then the square root
of $\BB$ exists and $z^T\BB z=|\BB^{1/2}z|^2$ for $z\in\R^n$.
Let $P_L$ and $P_{L^\perp}$ be the projection matrices onto 
$L=\operatorname{ker}\BB=\operatorname{ker}\BB^{1/2}\neq\{0\}$ and 
$L^\perp=\operatorname{ran}\BB$, respectively.

\begin{lemma}\label{lem.APL}
Let $\lambda>0$ be the smallest positive eigenvalue of $\BB^{1/2}$. Then
$$
  |P_{L^\perp}z| \le \lambda^{-1}|\BB^{1/2}z| \quad\mbox{for }z\in\R^n.
$$
\end{lemma}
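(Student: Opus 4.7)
The plan is to exploit spectral decomposition of the symmetric positive semidefinite matrix $B^{1/2}$ together with the fact that $\ker B^{1/2} = \ker B = L$, hence $\operatorname{ran} B^{1/2} = L^\perp$.

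First, I would observe that because $B^{1/2}$ is symmetric, it admits an orthonormal eigenbasis $\{e_1,\ldots,e_n\}$ of $\R^n$ with associated eigenvalues $\lambda_i\ge 0$. The zero eigenspace is precisely $L$, while the span of the eigenvectors with $\lambda_i>0$ is precisely $L^\perp$. The smallest of those positive eigenvalues is $\lambda$ by hypothesis.

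Next, given $z\in\R^n$, I would write $z=P_Lz+P_{L^\perp}z$. Since $B^{1/2}P_Lz=0$, this gives $B^{1/2}z = B^{1/2}P_{L^\perp}z$. Expanding $P_{L^\perp}z=\sum_{\lambda_i>0}c_ie_i$ in the eigenbasis, I get
\[
|B^{1/2}z|^2 = \sum_{\lambda_i>0}\lambda_i^2 c_i^2 \ge \lambda^2\sum_{\lambda_i>0}c_i^2 = \lambda^2|P_{L^\perp}z|^2,
\]
where orthonormality of the $e_i$ was used in the last equality. Taking square roots and dividing by $\lambda$ yields the claim.

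There is essentially no obstacle: the entire argument is linear algebra using the spectral theorem for symmetric matrices. The only point that requires a brief justification is the identification $\ker B^{1/2}=\ker B$, which follows at once because $B^{1/2}z=0$ implies $Bz=B^{1/2}(B^{1/2}z)=0$, while $Bz=0$ implies $0=z^TBz=|B^{1/2}z|^2$. This equality of kernels guarantees that $L^\perp=\operatorname{ran}B^{1/2}$ coincides with the span of the positive-eigenvalue eigenvectors, which is what allows the bound by $\lambda$.
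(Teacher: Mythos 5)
Your proof is correct and follows essentially the same route as the paper's: both reduce to $\BB^{1/2}z=\BB^{1/2}P_{L^\perp}z$ via $\ker\BB^{1/2}=\ker\BB=L$ and then apply the spectral lower bound $|\BB^{1/2}\widehat z|\ge\lambda|\widehat z|$ on $L^\perp$. You merely make the spectral decomposition and the identification of kernels more explicit than the paper does.
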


\begin{proof}
Let $z\in\R^n$ and $\widehat{z}=P_{L^\perp}z$. 
By definition of $\lambda$, $|\BB^{1/2}\widehat{z}|\ge\lambda|\widehat{z}|$.
Then the conclusion follows from
$\BB^{1/2}\widehat{z}=\BB^{1/2}z - \BB^{1/2}P_Lz = \BB^{1/2}z$.
\end{proof}

We introduce the relative entropy densities
\begin{align*}
  h_\Shn(u|v) &= \sum_{i=1}^n \big(\shn(u_i) - \shn(v_i) - \shn'(v_i)(u_i-v_i) 
	= \sum_{i=1}^n \bigg(u_i\log\frac{u_i}{v_i} - (u_i-v_i)\bigg), \\
	h_R(u|v) &= \frac12(u-v)^T\BB(u-v) = \frac12|\BB^{1/2}(u-v)|^2,
	\quad u,v\in[0,\infty)^n,
\end{align*}
where $\shn(z)=z(\log z-1)+1$. We denote by $\|A\|_2$ the norm of $A$ induced by
the Euclidean norm $|\cdot|$ in $\R^n$.

\begin{lemma}[Coercivity]\label{lem.coerc}
Let $a_0=\frac12\min_{i\in\{1,\ldots,n\}} b_{ii}>0$, 
$a_1=\|\BB\|_2$, and let $K\ge 1$. Then there exists a constant $c_*>0$, only depending on $a_0$, $a_1/a_0$,
and $M$, such that for all $u$, $v\in\Rge^n$ with $0<|v|\le M$,
$$
  h_\Shn(u|v) + h_R(u|v) \ge c_*|u-v|^2.
$$
\end{lemma}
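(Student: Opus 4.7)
The plan is to split the cone $\Rge^n$ into a near-field regime, where $|u|$ is comparable to $|v|$, and a far-field regime, where $|u|$ is large, and to exploit the complementary growth properties of the two entropies. Concretely, I would fix the threshold
\[
R := M\max\{1,\,2a_1/a_0\}
\]
and treat the cases $|u|>R$ and $|u|\le R$ separately.

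\emph{Far-field estimate $|u|>R$.} My starting point is that all entries of $\BB$ are nonnegative and $u,v\ge 0$ componentwise, so
\[
u^T\BB u \;=\; \sum_i b_{ii}u_i^2 + 2\sum_{i<j}b_{ij}u_iu_j \;\ge\; 2a_0|u|^2.
\]
Expanding $h_R(u|v)=\tfrac12 u^T\BB u - u^T\BB v + \tfrac12 v^T\BB v$ and using $|u^T\BB v|\le a_1M|u|$, one gets $h_R(u|v)\ge a_0|u|^2 - a_1M|u|$, which is at least $\tfrac{a_0}{2}|u|^2$ as soon as $|u|\ge 2a_1M/a_0$. Since $R\ge M$ forces $|u|>M$ in this regime, the crude bound $|u-v|^2\le 2|u|^2+2M^2\le 4|u|^2$ then yields $h_R(u|v)\ge c_*^{(1)}|u-v|^2$ with $c_*^{(1)}$ depending only on $a_0,a_1/a_0,M$.

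\emph{Near-field estimate $|u|\le R$.} Here $u_i,v_i\in[0,R]$ for every index. I would apply Taylor's theorem with Lagrange remainder to $\phi(t):=t\log t - t + 1$ at $t=1$: from $\phi(1)=\phi'(1)=0$ and $\phi''(t)=1/t$ one obtains $\phi(t)=(t-1)^2/(2\xi)$ for some $\xi$ between $\min(1,t)$ and $\max(1,t)$, whence $\phi(t)\ge(t-1)^2/(2\max(t,1))$ for every $t>0$ (the boundary case $\phi(0)=1\ge 1/2$ is verified directly). Rescaling, for $v_i>0$ this gives
\[
u_i\log(u_i/v_i)-(u_i-v_i)\;=\;v_i\,\phi(u_i/v_i)\;\ge\;\frac{(u_i-v_i)^2}{2\max(u_i,v_i)}\;\ge\;\frac{(u_i-v_i)^2}{2R}.
\]
Indices with $v_i=0$ are absorbed by the standard convention: either $u_i=0$, in which case both sides of the desired inequality gain nothing, or $u_i>0$, in which case $h_\Shn(u|v)=+\infty$ and the claim is trivial. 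Summing over the nondegenerate indices yields $h_\Shn(u|v)\ge |u-v|^2/(2R)$, and hence $h_\Shn(u|v)+h_R(u|v)\ge c_*^{(2)}|u-v|^2$ on the near-field region.

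Taking $c_*:=\min(c_*^{(1)},c_*^{(2)})$ combines the two regimes and gives a constant depending only on $a_0$, $a_1/a_0$ and $M$, as claimed; the parameter $K\ge 1$ in the statement does not seem to enter the argument. I do not expect any serious obstacle: the main delicate point is simply the bookkeeping of components with $v_i=0$, where the inequality trivializes because the Shannon relative entropy becomes $+\infty$ unless $u$ also vanishes on those indices.
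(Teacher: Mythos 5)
Your proof is correct, and the overall strategy is the same as the paper's: split into a regime where the Rao entropy dominates via the diagonal bound $\tfrac12 u^T\BB u\ge a_0|u|^2$ (using $b_{ij}\ge 0$ and $u\ge 0$), and a complementary regime where a second-order Taylor expansion of $z\mapsto z\log z$ makes the Shannon relative entropy quadratic in $u-v$. The differences are in the bookkeeping, and they are worth noting. The paper splits according to whether $(a_0/2)|u|\ge a_1|v|$ or not, and in the second case bounds $f''(s(u_i-v_i)+v_i)$ from below using $v_i\le M$ together with the componentwise ratio bound $u_i/v_i<2a_1/a_0$; you instead split at the absolute threshold $|u|\gtrless R$ with $R=M\max\{1,2a_1/a_0\}$ and bound the Taylor remainder by $\max(u_i,v_i)\le R$. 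Your variant is actually the more robust one: the paper's inference of $|u_i/v_i|<2a_1/a_0$ from $(a_0/2)|u|<a_1|v|$ does not hold componentwise when some coordinate $v_i$ is much smaller than $|v|$, whereas your bound $\max(u_i,v_i)\le R$ only uses $|u|\le R$ and $|v|\le M\le R$ and needs no control of the ratios $u_i/v_i$. You also correctly handle the degenerate indices $v_i=0$ (which the paper passes over) and correctly observe that the parameter $K\ge 1$ in the statement plays no role in the argument. Both routes yield a constant depending only on $a_0$, $a_1/a_0$, and $M$, as required.
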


\begin{proof}
By assumption, we have 
$\frac12 u^T\BB u\ge\frac12\sum_{i=1}^n b_{ii}u_i^2\ge a_0|u|^2$ for all 
$u\in\Rge^n$. If $(a_0/2)|u|\ge a_1|v|$ then
\begin{align*}
  h_R(u|v) &= \frac12 u^T\BB v - v^T\BB u + \frac12 v^T\BB v
	\ge a_0|u|^2 - a_1|u||v| + a_0|v|^2 \\
	&\ge a_0|u|^2 - \frac{a_0}{2}|u|^2 + a_0|v|^2 = \frac{a_0}{2}|u|^2 + a_0|v|^2
	\ge \frac{a_0}{3}|u-v|^2.
\end{align*}
Next let $(a_0/2)|u|<a_1|v|$. We find for $f(z)=z\log z$ that
\begin{align*}
  u_i\log&\frac{u_i}{v_i}- (u_i-v_i) 
	= f(u_i) - f(v_i) - f'(v_i)(u_i-v_i) \\
	&= (u_i-v_i)\int_0^1 
	\big(f'(s(u_i-v_i)+v_i)-f'(v_i)\big)\big|_{s=0}^\theta\dd\theta \\
	&= (u_i-v_i)^2\int_0^1\int_0^\theta f''(s(u_i-v_i)+v_i)\dd s\dd\theta.
\end{align*}
Then we infer from $|u_i/v_i|<2a_1/a_0$ that
$$
  f''(s(u_i-v_i)+v_i) = \frac{1}{v_i(s(u_i/v_i-1)+1)} 
	> \frac{1}{M(s(2a_1/a_0-1)+1)}
$$
and consequently,
$$
  u_i\log\frac{u_i}{v_i} - (u_i-v_i) 
	\ge \frac{(u_i-v_i)^2}{M}\int_0^1\int_0^\theta\frac{\dd s\dd\theta}{s(2a_1/a_0-1)+1},
$$
which shows that $h_\Shn(u|v) \ge c_1|u-v|^2$, where
$$
  c_1 = \frac{1}{M}\min_{i=1,\ldots,n} \int_0^1\int_0^\theta
	\frac{\dd s\dd\theta}{s(2a_1/a_0-1)+1}.
$$
Putting these estimates together and observing that $h_\Shn(u|v)\ge 0$, $h_R(u|v)\ge 0$,
we conclude the proof with $c_*=\min\{a_0/3,c_1\}$. 
\end{proof}

\section{A discrete Aubin--Lions compactness result}\label{app:compactness}
We here summarize the compactness result~\cite[Theorem~3.4]{GaLa12}, which is a basic ingredient in the proof of the convergence of our numerical approximation scheme (cf.\ Section~\ref{sec.conv}). 
We focus on the specific functional setting that is needed for our purpose.
In this setting, the proof of~\cite[Theorem~3.4]{GaLa12} relies on the following two key properties, whose validity has been verified in the proof of Proposition~9 of~\cite[Section 6.1]{JuZu20}:
\begin{enumerate}
	\item[(P1)] Let $(v_m)_m$ be a sequence of functions with $v_m\in V_{\T_m}$ for all $m$ and such that $\sup_m\|v_m\|_{1,2,\T_m}<\infty$. Then there exists a function $v\in L^2(\Om)$ such that, along a subsequence, $v_m\to v$ in $L^2(\Om)$.
	\item[(P2)] If $v_m\to v$ in $L^2(\Om)$ and $\|v_m\|_{-1,4,\T_m}\to0$, then $v\equiv0$.
\end{enumerate}
Recall that, for a given spatial mesh $\T$ and a time step size $\Delta t$,  the discrete function spaces $V_\T$ and $V_{\T,\Delta t}$ were defined in Section~\ref{ssec:function-spaces}.
Thanks to (P1) and (P2), the specific version of Theorem 3.4 in~\cite{GaLa12}, which we rely upon, can be stated as follows.
\begin{proposition}[Corollary of Theorem 3.4 in \cite{GaLa12}]\label{prop:comp.app}
Let $(\widehat{u}_m)_m$ be a sequence of functions such that $\widehat{u}_m\in  V_{\T_m,\Delta t_m}$ for all  $m\in \mathbb{N}$. 
Suppose that there exists a finite constant $C>0$ such that for all $m\in \mathbb{N}$,
\[
\sum_{k=1}^{N_m}\Delta t_m\|\widehat{u}^k_m\|_{1,2,\T_m}^2 
+\sum_{k=1}^{N_m}\Delta t_m\|\pa_t^{\Delta t_m}\widehat{u}^k_m\|_{-1,4,\T_m}^2
\le C.
\]
Then there exists $v\in L^2(0,T;L^2(\Om))$ such that, after passing to a subsequence, 
\[\widehat{u}_m\to v\text{  in }L^2(0,T;L^2(\Om))\;\;\text{as }m\to\infty.\]
\end{proposition}
The key point which this result addresses, as compared to more classical versions of the Aubin--Lions lemma, is its ability to handle a dependency of the spatial norms on the parameter $m$ itself.
\end{appendix}

\end{document}